\newtheorem{theorem}{Theorem}[section]
\newtheorem{corollary}[theorem]{Corollary}
\newtheorem{lemma}[theorem]{Lemma}
\newtheorem{remark}[theorem]{Remark}
\newtheorem{example}[theorem]{Example}
\newtheorem{definition}[theorem]{Definition}
\numberwithin{equation}{section}
\newcommand{\C}{{\ensuremath{\mathbb{C}}}}
\newcommand{\Cm}{{\ensuremath{\C^{m\times n}}}}
\newcommand{\Cnn}{{\ensuremath{\C^{n\times n}}}}
\newcommand{\Cmm}{{\ensuremath{\C^{m\times m}}}}
\newcommand{\Ra}{{\ensuremath{\cal R}}}
\newcommand{\Nu}{{\ensuremath{\cal N}}}
\newcommand{\rk}{{\ensuremath{\rm rank}}}
\newcommand{\rank}{{\ensuremath{\rm rank}}}
\begin{document}

\author{D.E. Ferreyra\thanks{Universidad Nacional de R\'io Cuarto, CONICET, FCEFQyN, RN 36 KM 601, 5800 R\'io Cuarto, C\'ordoba, Argentina. E-mail: \texttt{deferreyra@exa.unrc.edu.ar}} ~and
Saroj B. Malik\thanks{School of Liberal Studies, Ambedkar University, Kashmere Gate, Delhi, India. E-mail:  \texttt{saroj.malik@gmail.com}}}

\title{Relative EP matrices}
\date{}
\maketitle

\begin{abstract}
The purpose of the present work is to introduce the concept of  relative EP matrix of a rectangular matrix relative to a partial isometry (or, in short, $T$-EP matrix) hitherto unknown. We extend various  basic results on EP matrices and we study the relationship between $T$-hermitian, $T$-normal and $T$-EP matrices. The main theorems of this paper consist in  providing canonical forms of relative EP matrices when matrices involved are rectangular as well as square. We then use them  to characterize the relative EP matrices and show their properties. In fact, an interesting fact that has emerged is that  $A$ is $T$-EP if and only if  there is an EP matrix $C$ such that $A=CT$ and $C=TT^*C$ whatever be the matrix, square or rectangular. We also give various necessary and sufficient conditions for a matrix to be  $T$-EP.
\end{abstract}
AMS Classification: 15A09; 15A27; 15B57.

\textrm{Keywords}: Moore-Penrose inverse, Partial isometry, EP matrix, $T$-EP matrix.

\section{Introduction}

One of the problems in Matrix Theory is to  study and analyze the many classes of special type matrices for their properties (algebraic as well as geometric). The class of EP (range-hermitian) matrices is one such class. This class has attracted the attention of several authors since they were first defined  by H. Schwerdtfeger \cite{Sch}. Amongst many references available some of them are \cite{BaTr, CaMe,  ChTi, Dj, MoDjKo, Pearl}. One of the reasons for interest in this class is the fairly weaker condition on the matrix, namely the equality of it's range space and the range space of its conjugate transpose. EP matrices include in them the wide classes of matrices as special cases such as hermitian matrices, skew-hermitian matrices, unitary matrices,  normal matrices, and also  nonsingular matrices.

The EP matrices have been generalized in many ways. The first such generalization known as bi-EP matrices is given by Hartwig and Spindelb\"ock in \cite{HaSp}. Since then on several of their generalizations have appeared some of them being conjugate EP matrix \cite{MeIn}, $k$-EP \cite{MeKr}, weighted EP matrices \cite{Tian}, $m$-EP \cite{MaRuTh}, $k$-core EP \cite{FeLeTh3}, and $k$-DMP \cite{FeLeTh3}. All these extensions have been defined on the set of complex square matrices.  Our aim here in this work is to extend the class of EP matrices to a class of relative EP matrices when matrices are rectangular matrices.

We introduce in this paper the concept  of {\it relative EP matrix} for a rectangular matrix $A,$  extending the notion of square EP matrices to rectangular matrices. We recall that the concepts of relative hermitian and relative normal matrices were  introduced by Hestenes in \cite{Hes} in order to develop a spectral theory for rectangular matrices. Motivated by these definitions, we introduce in this paper the concept  of {\it relative EP matrix} for a rectangular matrix $A,$ (in short $T$-EP matrices, where $T$ is a fixed partial isometry). Like in classical case, the relative EP matrices are a generalization of each of relative hermitian and relative normal matrices. It is shown that every relative normal matrix is relative EP but not conversely. Several basic properties of EP matrices extend to relative EP matrices.

The main theorems of this paper are the development of canonical forms of relative EP matrices and using these  canonical forms we give various characterizations of these matrices.

The paper is organized as follows.  In Section 2, we introduce and characterize the concept of relative EP matrix as a generalization of the concept of EP matrix to the rectangular matrices. This new class of matrices is called the class of $T$-EP matrices and contains the class of relative normal matrices and therefore the class of relative hermitian matrices defined in \cite{Hes}. We also obtain here some properties of these matrices. Section 3 consists of obtaining canonical forms of a relative EP matrix where the matrix can be rectangular or square. The advantage of the canonical form in either case is that they reveal the relationship of a $T$-EP matrix $A$ to an EP matrix necessarily other than $A$ (when $T$ is not the trivial partial isometry $I_n$). Using these canonical forms we derive some characterizations of $T$-EP matrices. In particular,  we recover various  well-known results for EP matrices obtained by Pearl in \cite{Pearl}. Section 4 gives the interrelations between EP  and $T$-EP matrices as well as some more characterizations of $T$-EP matrices when $T$ belongs to certain matrix class. The final Section 5 deals with the problem of the sum of two relative EP matrices.

We denote the set of all $m \times n$ complex matrices by $\Cm$. For $A\in\Cm$, the symbols
$A^*$, $\rk(A)$,  $\Nu(A)$, and  $\Ra(A)$ will stand for the conjugate transpose, the rank, the null space, and the range space of $A$, respectively.
For $A\in\Cnn,~A^{-1}$ will denote the inverse of $A$ and $I_n$ will refer to the $n\times n$ identity matrix.

The symbol $A^\dag$  denotes the Moore-Penrose inverse of a matrix $A\in \Cm,$ which is the unique matrix satisfying the following four Penrose conditions
\cite{CaMe}:
\[ AA^\dag A=A, \quad A^\dag AA^\dag=A^\dag,
\quad (AA^\dag)^*=AA^\dag, \quad (A^\dag A)^*=A^\dag A.\]

A matrix $A\in \Cnn$ is an EP matrix if $\Ra(A)=\Ra(A^*)$, or equivalently $AA^\dag=A^\dag A$, see, e.g. \cite{CaMe, Pearl}.
Recall that $T\in \Cm$ is a partial isometry if it verifies $T=TT^*T$. Note that $T$ is a partial isometry if and only if $T^\dag=T^*$ \cite[Theorem 4.2.1]{CaMe}.
Throughout this work we will use  $T\in \Cm$ as a fixed partial isometry (We note that $T$ can also be a square matrix, but then it will be clear from the context of the usage).

\section{$T$-EP matrices}
In this section we introduce the concept of relative EP matrix for a matrix $A \in \Cm$ with respect to a partial isometry $T\in \Cm$. We present various equivalent characterizations of this new  class of matrices and we show that this class of matrices contains the following two classes of matrices defined by Hestenes \cite{Hes}:

\begin{definition}\label{R01} Let $A, T \in \Cm.$ Then $A$ is called relative hermitian to $T$ (or, in short, $T$-hermitian) if $A=TA^*T.$
\end{definition}

\begin{definition}\label{R02} Let $A,T\in \Cm.$ We say that $A$ is  relative normal to $T$ (or, in short, $T$-normal) if  $A=TT^*A=AT^*T$ and  $AA^*T=TA^*A$.
\end{definition}

Like in classical case as is known that every hermitian matrix is normal, we show that every  $T$-normal matrix  is $T$-hermitian.

\begin{theorem}\label{TH implies TN} Let $A,T\in \Cm.$ If $A$ is $T$-hermitian then $A$ is $T$-normal.
\end{theorem}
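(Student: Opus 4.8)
\emph{Proof proposal.} The plan is to derive all three equalities in Definition~\ref{R02} directly from the single hypothesis $A = TA^*T$, using repeatedly the defining identity $T = TT^*T$ of a partial isometry (equivalently $T^\dag = T^*$). The only auxiliary fact I would prepare first is obtained by taking conjugate transposes in $A = TA^*T$, namely the companion identity $A^* = T^*AT^*$; with this in hand the whole proof becomes a matter of inserting and cancelling factors of $TT^*$ or $T^*T$.

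For the two ``weighted idempotent'' conditions $A = TT^*A$ and $A = AT^*T$, I would simply compute $TT^*A = TT^*(TA^*T) = (TT^*T)A^*T = TA^*T = A$ and, symmetrically, $AT^*T = (TA^*T)T^*T = TA^*(TT^*T) = TA^*T = A$; in each case a single application of $T = TT^*T$ collapses the expression back to $A$. These two steps are entirely routine.

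The substantive point is the commutation relation $AA^*T = TA^*A$, and the idea is to show that \emph{both} sides coincide with the common value $AT^*A$. Substituting $A^* = T^*AT^*$ gives $AA^*T = A(T^*AT^*)T = AT^*\,(AT^*T) = AT^*A$, where the last equality uses $AT^*T = A$ from the previous paragraph; likewise $TA^*A = T(T^*AT^*)A = (TT^*A)\,T^*A = AT^*A$, using $TT^*A = A$. Comparing the two expressions yields $AA^*T = TA^*A$, completing the verification that $A$ is $T$-normal. I would flag this third equality as the one place requiring a moment's thought: the temptation is to manipulate $AA^*T$ and $TA^*A$ into one another directly, whereas the clean route is to reduce both independently to the normal form $AT^*A$. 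Everything else is bookkeeping with the partial-isometry identity.
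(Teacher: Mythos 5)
Your proposal is correct and follows essentially the same direct-verification route as the paper: the identities $A=TT^*A$ and $A=AT^*T$ are obtained by exactly the same insert-and-collapse computation with $T=TT^*T$. The only cosmetic difference is in the third identity, where the paper substitutes $A=TA^*T$ once into the first factor to get $AA^*T=(TA^*T)A^*T=TA^*(TA^*T)=TA^*A$ in one line, whereas you pass to $A^*=T^*AT^*$ and reduce both sides to the common value $AT^*A$; both computations are valid.
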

  \begin{proof} Since $A$ is $T$-hermitian, $A=TA^*T.$ So,
  \[AT^*T=(TA^*T)T^*T=TA^*(TT^*T)=TA^*T=A.\] Similarly, $TT^*A=TT^*(TA^*A)=A$. Also,
  \[AA^*T=(TA^*T)A^*T=TA^*(TA^*T)=TA^*A.\]
 Thus both (i) and (ii) in Definition \ref{R02} hold.
  \end{proof}

We now define the concept of relative EP matrix respect to a partial isometry $T$ as follows:

\begin{definition}\label{def TEP} Let $A,T\in \Cm.$  Then we say $A$ is  relative EP to $T$ (or, in short, $T$-EP) if $\Ra(A)=\Ra(TA^*T)$ and $A=AT^*T$.
\end{definition}

\begin{remark}{\rm In particular, when the partial isometry $T$ is the identity matrix  above definition reduces to $\Ra(A)=\Ra(A^*)$, which implies that $A$ is an EP matrix \cite{Pearl}.}
\end{remark}

We next show that a $T$-normal matrix is a $T$-EP matrix.

\begin{theorem}\label{TN implies TEP} Let $A,T\in \Cm$ If $A$ is $T$-normal then $A$ is $T$-EP.
 \end{theorem}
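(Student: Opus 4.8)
The plan is to assume $A$ is $T$-normal and verify the two defining conditions of Definition~\ref{def TEP}: namely $A = AT^*T$ and $\Ra(A) = \Ra(TA^*T)$. The first condition is immediate, since $A = AT^*T$ is literally one of the two equalities in the definition of $T$-normal (Definition~\ref{R02}). So the entire content of the proof lies in establishing the range equality $\Ra(A) = \Ra(TA^*T)$, and I expect this to be the main obstacle.

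To prove $\Ra(A) = \Ra(TA^*T)$, I would show the two inclusions separately, exploiting the normality-type relation $AA^*T = TA^*A$ together with $A = TT^*A = AT^*T$. For the inclusion $\Ra(TA^*T) \subseteq \Ra(A)$: starting from $A = TT^*A$ we get $A^* = A^*TT^*$, hence $TA^*T = TA^*TT^*T = (TA^*T)T^*T$, but more usefully I would try to factor $TA^*T$ through $A$. A cleaner route is to use ranks: since $T$ is a partial isometry with $A = AT^*T = TT^*A$, multiplication by $T$ and $T^*$ on the appropriate sides is rank-preserving on the relevant subspaces, so $\rk(TA^*T) = \rk(A^*) = \rk(A)$. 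Thus it suffices to prove just one inclusion, say $\Ra(TA^*T) \subseteq \Ra(A)$, and the reverse inclusion then follows from equality of dimensions. For that one inclusion I would compute: using $AA^*T = TA^*A$ and $A = TT^*A$, one has $AA^*TA^* \cdot (\text{something}) $; more directly, from $TA^*A = AA^*T$ we obtain $TA^*AT^* = AA^*TT^*$, and since $A = AT^*T$ gives $A^* = T^*TA^*$, we can write $TA^*T = T(T^*TA^*)T = \dots$ — the key identity to chase is that $TA^*T$ equals $A$ times a matrix, using that $A^* = A^\dagger A A^*$ and the Penrose equations applied to $A$.

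Concretely, I would argue as follows. From $AA^*T = TA^*A$, right-multiply by $T^*$ and use $T^*$ on the left suitably, together with $A = TT^*A$, to derive $A^* T = T^* T A^* T$ (since $A^* = (TT^*A)^* = A^*TT^*$ is false in general — careful: $A = TT^*A \Rightarrow A^* = A^* T T^*$). Then $TA^*T = T(A^*TT^*)T = (TA^*T)(T^*T)$. For membership in $\Ra(A)$, note $TA^*A = AA^*T$ shows $\Ra(T A^* A) \subseteq \Ra(A)$, and since $A = AT^*T$ we have $A^* = T^*TA^*$, so $TA^*T = TA^* \cdot T$; writing $T = TT^*T$ does not immediately help, so instead use $A^* = A^\dagger A A^*$ to get $TA^*T = T A^\dagger A A^* T = (TA^\dagger)(AA^*T) = (TA^\dagger)(TA^*A) \in \Ra(T A^* A)$?—that puts it in the wrong range. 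The correct maneuver is to observe $AA^*T = TA^*A \Rightarrow T^*A A^* T = T^* T A^* A = A^* A$ (using $A^* = T^*TA^*$), hence $A^* T = (A^*A)(A^*T)^{\dagger\ast}\cdots$; cleaner: from $A^*AT^*T = A^*A$ and the above, deduce $\Ra((TA^*T)^*) = \Ra(T^*AT^*) \subseteq \Ra(A^*A) = \Ra(A^*)$, which by taking conjugate transposes and using $\rk(TA^*T)=\rk(A)$ gives $\Ra(TA^*T) = \Ra(A)$. The hard part, as anticipated, is bookkeeping the left/right actions of $T$ and $T^*$ correctly; once the rank identity $\rk(TA^*T) = \rk(A)$ is in hand, only one inclusion must be pushed through, which keeps the computation manageable.
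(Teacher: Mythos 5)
Your proposal correctly isolates the real content of the theorem ($A=AT^*T$ is indeed immediate from Definition~\ref{R02}, and everything rests on $\Ra(A)=\Ra(TA^*T)$), and the rank identity $\rk(TA^*T)=\rk(A)$, which does follow from $A=AT^*T=TT^*A$ alone, is a legitimate device for reducing the problem to a single inclusion. But the inclusion you actually push through is the wrong one. Your chain $AA^*T=TA^*A\Rightarrow T^*AA^*T=T^*TA^*A=A^*A$ yields (after supplying the rank argument for $\Ra(T^*A)=\Ra(A^*)$) the inclusion $\Ra\bigl((TA^*T)^*\bigr)=\Ra(T^*AT^*)\subseteq\Ra(A^*)$, which is a statement about row spaces, i.e.\ about $\Nu(A)\subseteq\Nu(TA^*T)$. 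The closing step ``by taking conjugate transposes \dots\ gives $\Ra(TA^*T)=\Ra(A)$'' is not valid: $\Ra(B^*)\subseteq\Ra(C^*)$ is equivalent to $\Nu(C)\subseteq\Nu(B)$ and carries no information about $\Ra(B)$ versus $\Ra(C)$. What you have in fact established (together with $A=TT^*A$) is that $A^*$ is $T^*$-EP; turning that into ``$A$ is $T$-EP'' needs Theorem~\ref{characterization 4}, which is proved much later and is not available at this point. (You half-sensed the problem in the earlier aborted attempt with $A^*=A^\dag AA^*$, which, as you note, ``puts it in the wrong range'' --- the dual computation has exactly the same defect, just hidden one layer deeper.)

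The gap is repairable by factoring on the other side: since $A^*=A^*AA^\dag$, the normality relation gives $TA^*T=TA^*AA^\dag T=(TA^*A)A^\dag T=(AA^*T)A^\dag T=A(A^*TA^\dag T)$, hence $\Ra(TA^*T)\subseteq\Ra(A)$, and your rank equality finishes the job. The paper avoids all of this bookkeeping with a short forward chain that never needs ranks or the Moore--Penrose inverse: $\Ra(A)=\Ra(AA^*)=\Ra(TT^*AA^*)=\Ra(TA^*AT^*)=\Ra(TA^*TT^*AT^*)=\Ra\bigl(TA^*T(TA^*T)^*\bigr)=\Ra(TA^*T)$, using only $A=TT^*A$, the conjugate transpose of $AA^*T=TA^*A$, and the identity $\Ra(M)=\Ra(MM^*)$ applied at both ends. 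That identity is exactly the tool that lets one stay on the column-space side throughout, which is where your argument drifts off course.
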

\begin{proof} Let $A$ be $T$-normal. Then $A=AT^*T=TT^*A$ and $AA^*T=TA^*A$ (or equivalently $T^*AA^*=A^*A T^*$). Clearly, $A=AT^*T$. It remains to see that $\Ra(A)=\Ra(TA^*T)$. In fact,
\begin{align*}
\Ra(A)&=\Ra(AA^*)\\
&=\Ra(TT^*AA^*) \\
&= \Ra(TA^*AT^*) \\
&= \Ra(TA^*TT^*AT^*) \\
&= \Ra(TA^*T(TA^*T)^*) \\
&= \Ra(TA^*T).
\end{align*}
\end{proof}
We now give a few examples.
Just as an observation, if the matrices involved are square matrices, every EP matrix $A\in \Cnn$ is relative EP, relative to trivial partial isometry $I_n$.

\begin{example}A trivial example of a $T$-EP matrix is the partial isometry $T$ itself because $T=TT^*T$.\end{example}

\begin{example}\label{E1}Let $T= \begin{bmatrix}
          0 & 1 & 0 \\
          0 & 0 & 0 \\
          0 & 0 & 1 \\
        \end{bmatrix},$ then $T$ is a partial isometry and all  $T$-hermitian matrices are of the type
$A= \begin{bmatrix}
                                                                                          0 & a & z \\
                                                                                          0 & 0 & 0 \\
                                                                                          0 & \overline{z} & b \\
                                                                                        \end{bmatrix},$ where $a,b$ are real numbers and $z\in \C$.
Note that $A$ is not hermitian. Moreover,  $A$ is not EP.
\end{example}
\begin{example}\label{E2}Let $T$ be the partial isometry in Example \ref{E1}.
Let $A=\begin{bmatrix}
0 & 1 & 1 \\
0 & 0 & 0 \\
0 & 1 & 1 \\
\end{bmatrix}.$
By using previous example, it is clear that $A$ is $T$-hermitian.
Therefore, by Theorems \ref{TH implies TN}  and \ref{TN implies TEP} we have that $A$ is $T$-normal and $T$-EP. However, note that $A$ is not EP.
\end{example}
Our next example shows that the converse of Theorem \ref{TN implies TEP} is false.

\begin{example}\label{E3} Let $T=\begin{bmatrix}
0 & 1 & 0 \\
0 & 0 & 1 \\
0 & 0 & 0 \\
\end{bmatrix}.$  Then $T$ is a partial isometry. Let $A=\begin{bmatrix}
0 & 1 & 1 \\
0 & 0 & 1 \\
0 & 0 & 0 \\
\end{bmatrix}.$ Then $A$ is $T$-EP, but not $T$-normal.
\end{example}

The following two lemmas are fundamental to developing the properties and characterizations of relative EP matrices. We also need to use the following important relation between the null space of a matrix $A\in \Cm$  and the range space of its  conjugate transpose $A^*$, namely
\begin{equation} \label{eqA}
\Ra(A)=\Nu(A^*)^\perp,
\end{equation}
where the superscript $\perp$ denotes the orthogonal complement of $\Nu(A^*)$.

\begin{lemma} \label{lemma 1} Let $A,T \in \Cm.$   Then the following are equivalent:
\begin{enumerate}[{\rm (i)}]
\item  $A=TT^*A$;
\item $\Ra(A)\subseteq \Ra(T)$;
\item $\Nu(T^*)\subseteq \Nu(A^*)$;
\item $A^\dag=A^\dag TT^*$.
\end{enumerate}
\end{lemma}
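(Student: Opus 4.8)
The plan is to prove the chain of equivalences $(i)\Rightarrow(ii)\Rightarrow(iii)\Rightarrow(iv)\Rightarrow(i)$, exploiting throughout that $T$ is a partial isometry, so $T^\dag=T^*$ and the matrix $TT^*=TT^\dag$ is the orthogonal projector onto $\Ra(T)$ (likewise $T^*T=T^\dag T$ is the orthogonal projector onto $\Ra(T^*)=\Nu(T)^\perp$).

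For $(i)\Rightarrow(ii)$: from $A=TT^*A=T(T^*A)$ we immediately get $\Ra(A)\subseteq\Ra(T)$. For $(ii)\Rightarrow(iii)$: taking orthogonal complements and using \eqref{eqA} in the form $\Ra(A)^\perp=\Nu(A^*)$ and $\Ra(T)^\perp=\Nu(T^*)$, the inclusion $\Ra(A)\subseteq\Ra(T)$ is equivalent to $\Nu(T^*)=\Ra(T)^\perp\subseteq\Ra(A)^\perp=\Nu(A^*)$. For $(iii)\Rightarrow(iv)$: from $\Nu(T^*)\subseteq\Nu(A^*)$ we obtain, by taking perps again, $\Ra(A)\subseteq\Ra(T)$, hence $\Ra(A^\dag{}^*)=\Ra(A)\subseteq\Ra(T)=\Ra(TT^*)$; since $TT^*$ is the orthogonal projector onto $\Ra(T)$, it fixes every vector of $\Ra(T)$, so $TT^*$ acts as the identity on the columns of $A^\dag{}^*$, i.e.\ $TT^*A^\dag{}^*=A^\dag{}^*$; conjugate-transposing gives $A^\dag TT^*=A^\dag$ (using $(TT^*)^*=TT^*$). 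Alternatively one can argue $A^\dag=A^\dag A A^\dag$ and $AA^\dag$ is the projector onto $\Ra(A)\subseteq\Ra(T)$, whence $TT^*(AA^\dag)=AA^\dag$, so $A^\dag=A^\dag(AA^\dag)=A^\dag(AA^\dag)^*=A^\dag(AA^\dag)TT^*=A^\dag TT^*$; either route is routine. Finally $(iv)\Rightarrow(i)$: from $A^\dag=A^\dag TT^*$ take conjugate transposes to get $A^\dag{}^*=TT^*A^\dag{}^*$, then multiply on the left by $A$ and use $A(A^\dag)^*=(A^\dag A)^*=A^\dag A$ appropriately — more cleanly, multiply $A^\dag = A^\dag T T^*$ on the left by $A A^\dag$ is not quite it; instead transpose to $(A^\dag)^* = TT^* (A^\dag)^*$, pre-multiply by $A$, obtaining $A(A^\dag)^* = TT^* A (A^\dag)^*$, and note $A(A^\dag)^* = (A^\dag A)^* \cdot(\text{something})$ — the slick version is to observe $AA^\dag A = A$ with $AA^\dag$ the orthogonal projector onto $\Ra(A)$, and $A^\dag = A^\dag TT^*$ forces $\Ra((A^\dag)^*) = \Ra(A) \subseteq \Ra(T)$, hence $TT^* A = A$.

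I do not expect any genuine obstacle here; the whole lemma is a packaging of two standard facts: (a) $\Ra(S)\subseteq\Ra(P)$ iff $PS=S$ when $P$ is the orthogonal projector onto $\Ra(P)$, and (b) the duality $\Ra(A)^\perp=\Nu(A^*)$ from \eqref{eqA}. The only point requiring a modicum of care is being consistent about which side one multiplies on and remembering to pass between $A$ and $A^\dag$ via $\Ra(A)=\Ra((A^\dag)^*)$ together with $AA^\dag$ and $A^\dag A$ being the orthogonal projectors onto $\Ra(A)$ and $\Ra(A^*)$ respectively; once that bookkeeping is fixed, each implication is a one- or two-line computation. If one prefers, the equivalences $(i)\Leftrightarrow(ii)$ and $(iii)\Leftrightarrow(iv)$ can each be proved directly (not just as a cycle) since they are symmetric under $A\mapsto A^\dag$, $T\mapsto T$ combined with conjugate transposition, which may make the write-up shorter.
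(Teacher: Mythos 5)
Your proof is correct and follows essentially the same route as the paper: the cyclic chain (i)$\Rightarrow$(ii)$\Rightarrow$(iii)$\Rightarrow$(iv)$\Rightarrow$(i), driven by the duality $\Ra(M)^\perp=\Nu(M^*)$ from \eqref{eqA} and the fact that $TT^*=TT^\dag$ and $AA^\dag$ are orthogonal projectors onto $\Ra(T)$ and $\Ra(A)$ (the paper's only cosmetic difference is that it closes the loop (iv)$\Rightarrow$(i) by pre-multiplying $A^\dag=A^\dag TT^*$ by $A^*A$ to get $A^*=A^*TT^*$, rather than via $\Ra((A^\dag)^*)=\Ra(A)$). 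Do discard the false starts in your write-up of (iv)$\Rightarrow$(i); the ``slick version'' you settle on is the one that works.
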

\begin{proof}
(i)$\Rightarrow$ (ii). Easy. \\
(ii)$\Rightarrow$ (iii). Using \eqref{eqA} in  $\Ra(A)\subseteq \Ra(T)$
 implies $\Nu(A^*)^\perp\subseteq \Nu(T^*)^\perp$, which yields to $\Nu(T^*)\subseteq \Nu(A^*)$.
 \\
(iii)$\Rightarrow$ (iv). As $\Nu(A^*)=\Nu(A^\dag)$ we have $\Nu(T^*)\subseteq \Nu(A^\dag)$ and is further equivalent to $A^\dag=A^\dag (T^*)^\dag T^*$.  As $T^\dag=T^*,$ we have $A^\dag=A^\dag (T^*)^*T^*=A^\dag T T^*$. \\
(iv)$\Rightarrow$ (i). Pre-multiplying $A^\dag=A^\dag TT^*$ by $A^*A,$ we obtain $A^*=A^* TT^*$ and therefore, $A=TT^*A$ by taking $*$ on both sides.

\end{proof}

Similarly one can show the following:

\begin{lemma}\label{lemma 2} Let $A,T \in \Cm.$  Then the following are equivalent:
\begin{enumerate}[{\rm (i)}]
\item $A=AT^*T$;
\item $\Nu(T) \subseteq \Nu(A)$;
\item $\Ra(A^*)\subseteq \Ra(T^*)$;
\item $A^\dag= T^*TA^\dag$.
\end{enumerate}
\end{lemma}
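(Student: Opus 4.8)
The plan is to mirror the proof of Lemma~\ref{lemma 1} essentially verbatim, exploiting the symmetry between left- and right-multiplication together with the conjugate-transpose duality. Indeed, the statement of Lemma~\ref{lemma 2} is obtained from Lemma~\ref{lemma 1} by replacing $A$ with $A^*$ and $T$ with $T^*$: the equation $A=AT^*T$ becomes, after taking adjoints, $A^*=T^*T A^*$, which is precisely condition (i) of Lemma~\ref{lemma 1} applied to the pair $(A^*,T^*)$ (noting $T^*$ is again a partial isometry since $(T^*)^\dag=(T^\dag)^*=(T^*)^*=T$). Rather than invoke Lemma~\ref{lemma 1} as a black box, however, I would prove the four equivalences directly in a cycle (i)$\Rightarrow$(ii)$\Rightarrow$(iii)$\Rightarrow$(iv)$\Rightarrow$(i), since this keeps the argument self-contained and matches the style of the preceding lemma.

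Concretely: for (i)$\Rightarrow$(ii), if $A=AT^*T$ and $Tx=0$ then $Ax = AT^*Tx = 0$, so $\Nu(T)\subseteq\Nu(A)$. For (ii)$\Rightarrow$(iii), apply the identity \eqref{eqA} in the form $\Ra(B^*)=\Nu(B)^\perp$: from $\Nu(T)\subseteq\Nu(A)$ we get $\Nu(A)^\perp\subseteq\Nu(T)^\perp$, i.e. $\Ra(A^*)\subseteq\Ra(T^*)$. For (iii)$\Rightarrow$(iv), use $\Ra(A^*)=\Ra(A^\dag)$ together with the fact that $\Ra(A^\dag)\subseteq\Ra(T^*)$ is equivalent to $A^\dag = (T^*)^\dag T^* A^\dag = T^{\dag\dag} T^* A^\dag$; since $T^\dag=T^*$ we have $T^{\dag\dag}=(T^*)^\dag=(T^\dag)^*=(T^*)^*=T$, wait—more directly, $T T^\dag$ is the orthogonal projector onto $\Ra(T)$ and $T^\dag T = T^* T$ is the orthogonal projector onto $\Ra(T^*)$, so $\Ra(A^\dag)\subseteq\Ra(T^*)$ gives exactly $A^\dag = T^*T A^\dag$. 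For (iv)$\Rightarrow$(i), post-multiply $A^\dag = T^*T A^\dag$ by $A A^*$ to obtain $A^\dag A A^* = T^*T A^\dag A A^*$; since $A^\dag A A^* = A^*$, this reads $A^* = T^*T A^*$, and taking adjoints yields $A = A T^* T$.

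The only point requiring a moment's care—the "main obstacle," such as it is—is the passage between a range/nullspace inclusion and the corresponding projector identity in steps (iii)$\Rightarrow$(iv) and (iv)$\Rightarrow$(i): one must remember that $T^*T$ is the orthogonal projector onto $\Ra(T^*)$ (valid precisely because $T$ is a partial isometry, so $T^*T = T^\dag T$), and that $\Ra(A^*)=\Ra(A^\dag)$ while $\Nu(A^*)=\Nu(A^\dag)$. Everything else is a routine transcription of the argument already given for Lemma~\ref{lemma 1}. In the write-up I would simply note that the proof is "similar to that of Lemma~\ref{lemma 1}, replacing $A$ by $A^*$ and $T$ by $T^*$," and spell out at most the one or two lines where the adjoint is taken, exactly as the authors signal with the phrase "Similarly one can show the following."
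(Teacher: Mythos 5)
Your proof is correct and follows exactly the route the paper intends: the authors omit the proof of this lemma with the remark ``Similarly one can show the following,'' meaning the cyclic argument of Lemma \ref{lemma 1} transposed under $A\mapsto A^*$, $T\mapsto T^*$, which is precisely what you carry out. All four implications check out, including the key facts that $T^*T=T^\dag T$ is the orthogonal projector onto $\Ra(T^*)$ and that $A^\dag AA^*=A^*$ in the step (iv)$\Rightarrow$(i).
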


We now use these lemmas to obtain some characterizations and properties of  $T$-EP matrices.

\begin{theorem}\label{characterization 1} Let $A,T\in \Cm$. Then the following are equivalent:
\begin{enumerate}[{\rm (i)}]
\item $A$ is $T$-EP (i.e., $\Ra(A)=\Ra(TA^*T)$ and $A=AT^*T$);
\item $\Ra(A)=\Ra(TA^*T)$ and $\Nu(T)\subseteq \Nu(A)$;
\item $\Nu(A^*)=\Nu(T^*AT^*)$ and $\Ra(A^*)\subseteq \Ra(T^*)$;
\item $\Ra(A)=\Ra(TA^*)$ and $\Nu(T)\subseteq \Nu(A)$;
\item $\Ra(A)=\Ra(TA^*)$ and $A=AT^*T$;
\item $\Nu(A^*)=\Nu(AT^*)$ and $\Ra(A^*)\subseteq \Ra(T^*)$.
\end{enumerate}
\end{theorem}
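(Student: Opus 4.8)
The plan is to prove the six conditions equivalent by establishing a chain of implications that exploits the two preparatory lemmas. The key observation is that in each pair of conditions, the second clause is exactly one of the equivalent forms appearing in Lemma~\ref{lemma 1} or Lemma~\ref{lemma 2}: the clauses $A=AT^*T$, $\Nu(T)\subseteq\Nu(A)$, and $\Ra(A^*)\subseteq\Ra(T^*)$ are mutually equivalent by Lemma~\ref{lemma 2}, while $A^\dag=A^\dag TT^*$ (equivalently $A=TT^*A$, $\Ra(A)\subseteq\Ra(T)$, $\Nu(T^*)\subseteq\Nu(A^*)$) is handled by Lemma~\ref{lemma 1}. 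So the equivalences among the ``side conditions'' in (i)--(iii) are immediate, and likewise among those in (iv)--(vi); the real content is relating the range/null-space equalities $\Ra(A)=\Ra(TA^*T)$ and $\Ra(A)=\Ra(TA^*)$ to each other, under the standing side condition.

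First I would show (i)$\Leftrightarrow$(v): assuming $A=AT^*T$, I claim $\Ra(TA^*T)=\Ra(TA^*)$. The inclusion $\Ra(TA^*T)\subseteq\Ra(TA^*)$ is trivial. For the reverse, using $A=AT^*T$ we get $A^*=T^*TA^*$, hence $TA^* = TT^*TA^* = T(T^*TA^*)=TA^*$—more to the point, $TA^* = T(A^*) = T(T^*TA^*)$, so $\Ra(TA^*)=\Ra(TT^*TA^*)\subseteq\Ra(TT^*\cdot TA^*)$... the cleanest route is: from $A=AT^*T$ we have $TA^*=TA^*TT^*$ after taking adjoints appropriately, wait—let me instead argue $\Ra(TA^*)\subseteq\Ra(TA^*T)$ via $TA^* = TA^*(TT^*)^{k}$ is not automatic. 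The correct manipulation: $A=AT^*T \Rightarrow A^* = T^*TA^*$, so $A^*T = T^*TA^*T$, giving nothing new; but $TA^* = T(T^*TA^*)=(TT^*)TA^*$, and since $TT^*$ is an orthogonal projector onto $\Ra(T)\supseteq$ the relevant space, one shows $\Ra(TA^*)=\Ra(TT^*TA^*)=\Ra(T(T^*T)A^*)$. Then observe $\Ra(TA^*T) = \Ra(T(A^*T))$ and $\Ra(TA^*) = \Ra(T(A^*))$; since $A^* = T^*TA^* = T^*(TA^*)$ we get $\Ra(A^*)\subseteq\Ra(T^*)$, and combined with the fact that $T^*T$ acts as identity on $\Ra(T^*)$ while $T$ restricted to $\Ra(T^*)$ is injective with $\Ra(T|_{\Ra(T^*)})=\Ra(T)$... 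I would package this as: $T(A^*T)$ and $T(A^*)$ have the same range because $A^*T$ and $A^*$ differ by right-multiplication by $T$, and on $\Ra(A^*)\subseteq\Ra(T^*)$ right-multiplication by $T$ followed by left-multiplication by $T$ reproduces the space. This gives (i)$\Leftrightarrow$(v), and then (v)$\Leftrightarrow$(iv) by Lemma~\ref{lemma 2}, and (iv)$\Leftrightarrow$(ii) trivially.

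Next I would handle the ``starred'' conditions (iii) and (vi). The cleanest path is to show (i)$\Leftrightarrow$(iii) directly by applying the orthogonal-complement relation \eqref{eqA}: $\Ra(A)=\Ra(TA^*T)$ is equivalent to $\Nu(A^*)=\Nu((TA^*T)^*)=\Nu(T^*AT^*)$, and the side condition $A=AT^*T$ is equivalent to $\Ra(A^*)\subseteq\Ra(T^*)$ by Lemma~\ref{lemma 2}. Similarly (v)$\Leftrightarrow$(vi) via $\Ra(A)=\Ra(TA^*)\iff\Nu(A^*)=\Nu((TA^*)^*)=\Nu(AT^*)$, again pairing with Lemma~\ref{lemma 2}. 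This closes the cycle.

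The step I expect to be the main obstacle is the equality $\Ra(TA^*T)=\Ra(TA^*)$ under $A=AT^*T$—specifically the inclusion $\Ra(TA^*)\subseteq\Ra(TA^*T)$. The trick is to use Lemma~\ref{lemma 2}(iii), $\Ra(A^*)\subseteq\Ra(T^*)$, so that $A^* = T^*TA^*$ (since $T^*T$ is the orthogonal projector onto $\Ra(T^*)=\Nu(T)^\perp$), whence $TA^* = TT^*TA^* $; but more usefully, writing $A^* = T^*B$ for $B=TA^*$, we get $TA^* = TT^*B = TT^* TA^*$, and then $TA^* = T(T^*TA^*) = T(A^*)$ leads in circles. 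The genuinely correct identity is: $A^*=T^*TA^*$ implies, upon left-multiplying by $T$, that $TA^* = TT^*TA^*$, i.e. $TA^*\in\Ra(TT^*\cdot{}) = \Ra(T)$, which we already knew. To get the range equality I instead use: since $\Ra(A^*)\subseteq\Ra(T^*)$, every column of $A^*$ is $T^*v$ for some $v$; then the corresponding column of $TA^*$ is $TT^*v$, and the corresponding column of $TA^*T$ would require comparing $A^*T$ with $A^*$. So the honest argument compares $\Ra(A^*)$ with $\Ra(TA^*T)$ via: $\Ra(TA^*T)=T\,\Ra(A^*T)$ and one shows $\Ra(A^*T)=\Ra(A^*)$ using $A^*=T^*TA^*$ together with $T^*T$ being idempotent self-adjoint—namely $A^*T = T^*TA^*T$ and $\Ra(A^*)=\Ra(T^*TA^*)\subseteq \Ra(T^*T A^* T)\cdot$(something). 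I would resolve this carefully in the write-up; everything else is a routine application of \eqref{eqA} and the two lemmas.
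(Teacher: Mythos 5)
Your overall architecture (dispatch the side conditions to Lemmas~\ref{lemma 1} and~\ref{lemma 2}, handle (iii) and (vi) by the orthogonal-complement relation~\eqref{eqA}, and reduce everything to comparing $\Ra(TA^*T)$ with $\Ra(TA^*)$) is the same as the paper's. But the one step that carries the real content is exactly the step you leave unresolved, and the intermediate claim you are trying to prove there is \emph{false}: it is not true that $A=AT^*T$ alone forces $\Ra(TA^*T)=\Ra(TA^*)$. Take $T=\left[\begin{smallmatrix}0&1\\0&0\end{smallmatrix}\right]$ (a partial isometry) and $A=\left[\begin{smallmatrix}0&0\\0&1\end{smallmatrix}\right]$. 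Then $AT^*T=A$, but $TA^*=\left[\begin{smallmatrix}0&1\\0&0\end{smallmatrix}\right]$ has rank $1$ while $TA^*T=0$. Your repeated circular manipulations are a symptom of this: $A=AT^*T$ only yields $A^*=T^*TA^*$, i.e., control on the \emph{left} of $A^*$, whereas the inclusion $\Ra(TA^*)\subseteq\Ra(TA^*T)$ needs control on the \emph{right} of $A^*$, namely $A^*=A^*TT^*$, equivalently $A=TT^*A$, equivalently $\Ra(A)\subseteq\Ra(T)$ --- and that is supplied by the range-equality half of the hypothesis, not by $A=AT^*T$.

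Concretely, the paper closes the loop as follows. For (i)$\Rightarrow$(iv)/(v) no identity of ranges of $TA^*T$ and $TA^*$ is needed at all: $\Ra(A)=\Ra(TA^*T)\subseteq\Ra(TA^*)$ together with $\rk(TA^*)\le\rk(A^*)=\rk(A)=\rk(TA^*T)$ already forces $\Ra(A)=\Ra(TA^*)$. For the converse (iv)$\Rightarrow$(ii), one first notes $\Ra(TA^*T)\subseteq\Ra(TA^*)=\Ra(A)$, and then uses $\Ra(A)=\Ra(TA^*)\subseteq\Ra(T)$ to invoke Lemma~\ref{lemma 1} and get $A=TT^*A$, hence $A^*=A^*TT^*$ and $TA^*=TA^*TT^*$, so $\rk(A)=\rk(TA^*)=\rk(TA^*TT^*)\le\rk(TA^*T)$, which upgrades the inclusion to equality. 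You should replace your attempted range identity by this rank argument; the remainder of your proposal (the lemma bookkeeping and the passage to (iii) and (vi) via~\eqref{eqA}) is correct and matches the paper.
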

\begin{proof}
(i) $\Leftrightarrow$ (ii). It follows from Definition \ref{def TEP} and Lemma \ref{lemma 1}. \\
(ii) $\Leftrightarrow$ (iii). It is a  direct consequence of \eqref{eqA}. \\
(ii) $\Rightarrow$ (iv).  We only need to prove that $\Ra(A)=\Ra(TA^*)$.   Since $\Ra(A)=\Ra(TA^*T)$ we obtain $\Ra(TA^*T)\subseteq \Ra(TA^*)$ and $\rk(A)=\rk(TA^*T)$. Moreover,  $\rk(TA^*)\le \rk(A^*)=\rk (A) =\rk(TA^*T)$. Therefore, $\Ra(A)=\Ra(TA^*)$. \\
(iv) $\Rightarrow$ (ii). Suppose $\Ra(A)=\Ra(TA^*)$ and $\Nu(T)\subseteq \Nu(A)$. Only we need to show that $\Ra(A)=\Ra(TA^*T)$. In fact, clearly $\Ra(TA^*T)\subseteq \Ra(A)$. Also, as $\Ra(A)\subseteq \Ra(T)$ implies $TT^*A=A$ by Lemma \ref{lemma 1} (i). So, $\rk(A)=\rk(TA^*)=\rk(TA^*TT^*)\le \rk(TA^*T)$, whence   $\Ra(A)=\Ra(TA^*T)$. \\
(iv) $\Leftrightarrow$ (v) By Lemma \ref{lemma 2}. \\
(v) $\Leftrightarrow$ (vi) Follows from \eqref{eqA}  and Lemma \ref{lemma 2}.
\end{proof}

\begin{theorem}\label{properties 1} Let $A\in \Cm$ be a $T$-EP matrix, then the following hold:
\begin{enumerate}[{\rm (i)}]
\item $A=TT^*A$.
\item $\rk(A)=\rk(TA^*T)=\rk(TA^*)=\rk(A^*T)$;
\item $(T^*A)^\dag=A^\dag  T$;
\item $(AT^*)^\dag= T A^\dag$;
\item $(T^*AT^*)^\dag=TA^\dag T$.
\end{enumerate}
\end{theorem}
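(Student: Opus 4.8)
The plan is to exploit the characterizations already established in Theorem \ref{characterization 1} and the two lemmas, so that almost nothing new needs to be computed. For part (i), since $A$ is $T$-EP we have $\Ra(A)=\Ra(TA^*T)\subseteq\Ra(T)$, so Lemma \ref{lemma 1} (the equivalence (ii)$\Leftrightarrow$(i)) gives $A=TT^*A$ immediately. For part (ii), I would chain rank equalities: from $\Ra(A)=\Ra(TA^*T)$ we get $\rk(A)=\rk(TA^*T)$, and from the equivalent condition (iv) of Theorem \ref{characterization 1}, $\Ra(A)=\Ra(TA^*)$ gives $\rk(A)=\rk(TA^*)$. The remaining equality $\rk(A)=\rk(A^*T)$ follows by taking conjugate transposes: $\rk(A^*T)=\rk((A^*T)^*)=\rk(T^*A)$, and using (i) together with $T^\dag=T^*$ one checks $\rk(T^*A)=\rk(T^*TT^*A)=\rk(A)$ via $\rk(T^*A)\le\rk(A)=\rk(TT^*A)\le\rk(T^*A)$. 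So part (ii) is purely a rank-sandwiching argument.

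Parts (iii), (iv), (v) are the substance, and the natural route is to verify the four Penrose equations directly, using (i), $T^*T A^\dag = A^\dag$ (Lemma \ref{lemma 2}(iv), valid since $A=AT^*T$), $A^\dag T T^* = A^\dag$ (Lemma \ref{lemma 1}(iv), valid by part (i)), and the idempotents $AA^\dag$, $A^\dag A$ being Hermitian. For (iv), to show $(AT^*)^\dag = TA^\dag$ I would check: $AT^*\cdot TA^\dag\cdot AT^* = A(T^*T)A^\dag AT^* = AA^\dag A T^* = AT^*$ (using Lemma \ref{lemma 2}(i) in the form $AT^*T=A$); $TA^\dag\cdot AT^*\cdot TA^\dag = TA^\dag A T^*T A^\dag = TA^\dag A A^\dag = TA^\dag$; $(AT^*\cdot TA^\dag)^* = (AT^*TA^\dag)^* = (AA^\dag)^*=AA^\dag = AT^*TA^\dag$, Hermitian; and $(TA^\dag\cdot AT^*)^* = TT^*(A^\dag A)^* $ — here I need $TA^\dag A T^* = (TA^\dag AT^*)^*$, which holds because $A^\dag A = T^*T A^\dag A T^* T$ is consistent with $\Ra(A^*)\subseteq\Ra(T^*)$, so $T(A^\dag A)T^*$ is a Hermitian projection; more directly, $TA^\dag A T^* = T T^* A A^\dag T$ needs the symmetry that $A^\dag A$ commutes appropriately — this is the one spot requiring care. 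Part (iii) is the mirror image using $TT^*A=A$ in place of $AT^*T=A$, and part (v) follows by composing the arguments for (iii) and (iv), or by a direct four-condition check on $T^*AT^*$ with candidate $TA^\dag T$.

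The main obstacle I anticipate is the third and fourth Penrose conditions in (iii)--(v), namely showing the relevant products are Hermitian. The clean way around it is to first prove, as a preliminary observation, that $T$-EP of $A$ forces $T^*T$ to act as identity on both $\Ra(A^*)$ and $\Ra(A)$ — indeed $A=AT^*T$ gives the former via Lemma \ref{lemma 2}, and $A=TT^*A$ (part (i)) gives, after conjugating, $A^* = A^*TT^*$, i.e. $TT^*$ fixes $\Ra(A)$. Consequently $A^\dag A = A^\dag A \cdot T^*T = T^*T \cdot A^\dag A$ (both sides equal $A^\dag A$ since $\Ra((A^\dag A)^*) = \Ra(A^*)$), and similarly $TT^*$ commutes with $AA^\dag$. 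With these commutation facts in hand, each Hermitian-symmetry check collapses to the Hermitian symmetry of $AA^\dag$ or $A^\dag A$ together with $(T^*)^* = T$, and the computation becomes routine. I would therefore insert that commutation lemma first and then dispatch (iii)--(v) as short verifications.
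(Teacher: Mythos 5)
Your proposal is correct and follows essentially the same route as the paper: part (i) via Lemma \ref{lemma 1}, part (ii) by rank sandwiching, and parts (iii)--(v) by direct verification of the four Penrose conditions using the absorption identities $A=AT^*T$ and $A=TT^*A$. The one step you flag as delicate --- the Hermitian symmetry of $TA^\dag AT^*$ --- is in fact immediate, since $(TMT^*)^*=TMT^*$ for any Hermitian $M$, so your auxiliary commutation lemma, while correct, is not needed.
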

\begin{proof}
(i) As $A$ is $T$-EP, we have $\Ra(A)=\Ra(TA^*T)$, and so $\Ra(A)\subseteq \Ra(T)$. Now, Lemma \ref{lemma 1} implies $A=TT^*A$.\\
(ii) By definition of $T$-EP matrix it is clear that $\rk(A)=\rk(TA^*T)$.   Thus,  \[\rk(A)=\rk(TA^*T)\le \rk(TA^*)\le \rk(A)=\rk(TA^*T)\le \rk(A^*T)\le \rk(A),\]
whence $\rk(A)=\rk(TA^*T)=\rk(TA^*)=\rk(A^*T)$.\\
(iii) We check this by direct verification. Let $X:=A^\dag T $. By definition of $T$-EP matrix and part (i) we have   $A=AT^*T$ and $A=TT^*A$, respectively. Thus,
\[T^*AXT^*A=T^*A A^\dag T T^*A = T^*A A^\dag A=T^*A,\]
\[XT^*AX=A^\dag T T^*A A^\dag T= A^\dag AA^\dag T^*=A^\dag T^*=X,\]
\[(T^*AX)^*=(T^*AA^\dag T)^*=T^*(AA^\dag)^* T=T^*AA^\dag T=T^*AX,\]
\[(XT^*A)^*=(A^\dag T T^*A)^*=(A^\dag A)^*=A^\dag A=A^\dag TT^*A= XT^*A.\]
By uniqueness of the Moore-Penrose inverse we have $X=(T^*A)^\dag$. \\
Items (iv) and (v) can be proved along lines of (iii).
\end{proof}

\section{Canonical forms of $T$-EP matrices and consequences}
 In this section we exhibit the main results of this paper namely, canonical forms of a $T$-EP matrix $A$.
 We first derive a canonical form of a  $T$-EP matrix, when the matrix is rectangular. The tool we use is the Singular Value Decomposition (SVD). As a special case we have a canonical form of a square matrix. However, we provide another canonical form of a square matrix by using the Hartwig-Spindelb\"ock decomposition of a square matrix, which by itself was derived by using SVD. The reason to include this is that it has its own merits.  We then discuss immediate consequences of both the canonical forms. We also give some characterizations of $T$-EP matrices.

\begin{theorem}(SVD)\label{svd} Let $A\in \Cm$ a nonnull matrix of rank $r>0$ and let $\sigma_1\ge\sigma_2\ge\cdots \ge \sigma_r>0$ be the singular values of $A$.  Then there exist unitary matrices $U\in \Cmm$ and $V\in \Cnn$ such that
$A=U\begin{bmatrix}                                                                              \Sigma & 0 \\
0 & 0
\end{bmatrix}V^*$, where $\Sigma=diag(\sigma_1, \sigma_2, \hdots, \sigma_r)$. In particular, the Moore-Penrose inverse of $A$ is given by
\begin{equation}\label{MP respecto to SVD}
A^\dag=V\begin{bmatrix}                                                                              \Sigma^{-1} & 0 \\
0 & 0
\end{bmatrix}U^*.
\end{equation}
\end{theorem}

\begin{theorem}\label{canonical form rect} Let $A, T\in \Cm$ be such that $\rank(A)=r$. Then the following are equivalent.
\begin{enumerate}[{\rm(i)}]
\item  $A$ is $T$-EP;
\item  There exists nonsingular matrix $D$ of order $r$ and a unitary matrix $U$ such that
\begin{equation}\label{A respect to T}
 A= U\begin{bmatrix}
               D & 0 \\
               0 & 0 \\
              \end{bmatrix} U^*T \quad \text{and} \quad T=U\begin{bmatrix}
         T_1 & 0 \\
          0 & T_4 \\
        \end{bmatrix}V^*,
\end{equation}
where $T_1$ and $T_4$ are matrices satisfying $T_1T^*_1=I_r$ and $T_4T^*_4T_4=T_4$.
\item There exists an EP matrix $E\in \Cmm$ such that $A=ET$ and $TT^*E=E$.
\end{enumerate}
\end{theorem}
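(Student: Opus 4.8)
The plan is to prove the cycle of implications (i)$\Rightarrow$(ii)$\Rightarrow$(iii)$\Rightarrow$(i), using the SVD of $T$ as the backbone of the argument. First I would apply Theorem~\ref{svd} to the partial isometry $T$, writing $T=U\begin{bmatrix}T_1 & 0\\ 0 & 0\end{bmatrix}V^*$ where the nonzero singular values of a partial isometry are all equal to $1$; hence the upper-left block $T_1$ satisfies $T_1T_1^*=I$. Rather than assume $A$ has a specific form, I would begin from the hypothesis that $A$ is $T$-EP and extract structure: by Theorem~\ref{properties 1}(i), $A=TT^*A$, so $\Ra(A)\subseteq\Ra(T)$; combined with $A=AT^*T$ from Definition~\ref{def TEP}, this sandwiches $A$ between the projections $TT^*$ and $T^*T$. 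Writing $A$ in the bases given by $U$ and $V$ (so $A=U M V^*$ for some $M$), the two conditions $A=TT^*A$ and $A=AT^*T$ force $M$ to be supported only on the block corresponding to the nonzero singular values of $T$ — i.e. $A=U\begin{bmatrix}A_1 & 0\\ 0 & 0\end{bmatrix}V^*$ — after which one re-expresses this as $A=U\begin{bmatrix}D & 0\\0 & 0\end{bmatrix}U^*T$ by absorbing $T_1^{-1}$ (which exists since $T_1T_1^*=I_r$ forces $T_1$ invertible when $r=\rank T_1$; here one must check the rank bookkeeping so that $D$ comes out $r\times r$ and nonsingular, using Theorem~\ref{properties 1}(ii) that $\rank(TA^*T)=\rank(A)=r$). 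Allowing $T_4$ (the complement of $T_1$ in $T$) to be an arbitrary partial isometry block $T_4T_4^*T_4=T_4$ gives the general form in \eqref{A respect to T}; I would also need the remaining $T$-EP condition $\Ra(A)=\Ra(TA^*T)$ to pin down that $D$ is genuinely nonsingular and not merely block-triangular.

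For (ii)$\Rightarrow$(iii), given the canonical form, the natural candidate is $E:=U\begin{bmatrix}D & 0\\0 & 0\end{bmatrix}U^*$. This $E$ is visibly EP, since it is unitarily similar to a block-diagonal matrix with a nonsingular block and a zero block, so $\Ra(E)=\Ra(E^*)$. Then $A=ET$ is immediate from \eqref{A respect to T}, and $TT^*E=E$ follows by computing $TT^*=U\begin{bmatrix}I_r & 0\\0 & T_4T_4^*\end{bmatrix}U^*$ (using $T_1T_1^*=I_r$) and noting this acts as the identity on the range of $E$, which lives in the first $r$ coordinates of the $U$-basis.

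For (iii)$\Rightarrow$(i), suppose $A=ET$ with $E$ EP and $TT^*E=E$. I would verify the two defining conditions of $T$-EP directly. The condition $A=AT^*T$: here I expect to need $\Ra(A^*)\subseteq\Ra(T^*)$ (equivalently $\Nu(T)\subseteq\Nu(A)$ by Lemma~\ref{lemma 2}), which should follow because $A=ET$ so $\Nu(T)\subseteq\Nu(A)$ trivially — good, so $A=AT^*T$ is free. The condition $\Ra(A)=\Ra(TA^*T)$: one computes $\Ra(A)=\Ra(ET)=\Ra(ETT^*)$ (using $E=TT^*E$, hence $E^*=E^*TT^*$, and $E$ EP so $\Ra(E)=\Ra(E^*)$ lets one trade $E$ for $E^*$), and then chases ranks, using $\Ra(E)=\Ra(E^*)=\Ra(E^*T\cdots)$-type identities together with the fact that right-multiplying by $T$ does not drop rank when the row space already sits inside $\Ra(T^*)$. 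The cleanest route is probably to show $\rank(TA^*T)=\rank(A)$ and $\Ra(TA^*T)\subseteq\Ra(A)$ separately, mirroring the rank-sandwich technique used in the proof of Theorem~\ref{characterization 1}.

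The main obstacle I anticipate is the bookkeeping in (i)$\Rightarrow$(ii): showing that the effective block of $A$ has size exactly $r$ and that the resulting $D$ is nonsingular requires carefully combining all three pieces of information ($A=TT^*A$, $A=AT^*T$, and $\Ra(A)=\Ra(TA^*T)$) against the block structure of $T$, and making sure the unitary $U$ appearing in the SVD of $T$ can be reused (rather than a different unitary) in the expression $A=U\begin{bmatrix}D&0\\0&0\end{bmatrix}U^*T$. A subtlety is that $T_4$ need not be zero, so $TT^*$ is not simply a coordinate projection onto the first $r$ slots, and one must argue that the $T_4$-part contributes nothing to $A$; this is exactly where $A=TT^*A$ together with $A=AT^*T$ and the rank condition on $TA^*T$ must be used in tandem.
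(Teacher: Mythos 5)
Your implications (ii)$\Rightarrow$(iii) and (iii)$\Rightarrow$(i) are essentially correct and match the paper's argument: the candidate $E=U\begin{bmatrix}D&0\\0&0\end{bmatrix}U^*$, the computation of $TT^*E=E$ from $T_1T_1^*=I_r$, and, in the converse direction, $\Nu(T)\subseteq\Nu(ET)=\Nu(A)$ together with the range/rank chase $\Ra(TA^*)=\Ra(TT^*E^*)=\Ra(E)\supseteq\Ra(ET)=\Ra(A)$ and equality of ranks, feeding into Theorem~\ref{characterization 1}(iv). The strategic difference is confined to (i)$\Rightarrow$(ii): the paper applies Theorem~\ref{svd} to $A$ and partitions $T$ conformably with the \emph{same} unitaries, whereas you take the SVD of $T$ and try to locate $A$ inside it.

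That direction, as written, has a genuine gap, and it sits exactly at the step you defer as ``rank bookkeeping.'' The conditions $A=TT^*A$ and $A=AT^*T$ only confine $A$ to the block of size $s=\rank(T)\ge r$ determined by the SVD of $T$: they give $A=U_0\begin{bmatrix}M_1&0\\0&0\end{bmatrix}V_0^*$ with $M_1\in\C^{s\times s}$ of rank $r$, not an $r\times r$ nonsingular block. To convert $M_1$ (equivalently $AT^*$) into the form $W\begin{bmatrix}D&0\\0&0\end{bmatrix}W^*$ with a \emph{single} unitary $W$ on both sides --- which is what allows one $U$ to appear on both sides of $A=U\begin{bmatrix}D&0\\0&0\end{bmatrix}U^*T$ and simultaneously forces the off-diagonal blocks of $T$ in the $r\mid(m-r)$ partition to vanish --- you need $AT^*$ to be EP, and that is precisely the content of the remaining hypothesis $\Ra(A)=\Ra(TA^*T)$ (via Theorem~\ref{characterization 1}(v) and Pearl's canonical form of an EP matrix). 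You invoke that hypothesis only ``to pin down that $D$ is genuinely nonsingular,'' but without it the construction fails earlier: a general rank-$r$ matrix $M_1$ only admits an SVD $W_1\begin{bmatrix}D&0\\0&0\end{bmatrix}W_2^*$ with two different unitaries, which does not produce the claimed form, and $T_2,T_3$ need not be zero. In the paper's proof this is exactly where $AA^\dag TA^*=TA^*$ forces $T_3=0$ and then $T_2=0$. Your route is salvageable, but the decisive use of the range condition must be made explicit; as it stands the heart of the proof is missing.
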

\begin{proof}
(i) $\Rightarrow $ (ii).
Let $A$ be $T$-EP. By Theorem \ref{svd}, there exist unitary matrices $U\in \Cmm$ and $V\in \Cnn$, and a nonsingular  matrix $\Sigma$ such that $A=U\begin{bmatrix}
 \Sigma & 0 \\
  0 & 0 \\
 \end{bmatrix}V^*.$
Since $A$ is $T$-EP,  Theorem \ref{characterization 1} (v) implies $\Ra(A)=\Ra(TA^*)$ and $A=AT^*T$. Let $T$ be partitioned as
$T=U \begin{bmatrix}
 T_1 & T_2 \\
 T_3 & T_4
 \end{bmatrix}V^*$,
such that the product $TA^*$ and $AT^*$ are defined.
Clearly,  $A=AT^*T$ is equivalent to
\[\begin{bmatrix}
 \Sigma & 0 \\
 0 & 0
 \end{bmatrix}= \begin{bmatrix}
                \Sigma & 0 \\
                0 & 0
              \end{bmatrix}\begin{bmatrix}
T_1^*T_1+T_3^*T_3 & T_1^*T_2+T_3^*T_4\\
T_2^* T_1+T_4^*T_3 & T_2^* T_2+T_4^* T_4
 \end{bmatrix},\]
equivalently,
\begin{equation}\label{eq2}
T^*_1T_1+T^*_3T_3=I_r,
\end{equation}
\begin{equation}\label{eq3}
T^*_1T_2+T^*_3T_4=0.
\end{equation}
Since $T$ is a partial isometry, i.e., $T=TT^*T$, using \eqref{eq2} and \eqref{eq3} we obtain
\[\begin{bmatrix}
 T_1 & T_2 \\
 T_3 & T_4 \\
 \end{bmatrix}=\begin{bmatrix}
 T_1 & T_2 \\
 T_3 & T_4 \\
 \end{bmatrix}\begin{bmatrix}
                I_r & 0 \\
                0 & T^*_2T_2+T_4^*T_4
              \end{bmatrix},\]
 whence
 \begin{equation}\label{eq4}
 T_2=T_2T^*_2T_2+T_2T^*_4T_4,
 \end{equation}
\begin{equation}\label{eq5}
 T_4=T_4T^*_2T_2+T_4T^*_4T_4.
 \end{equation}
Also, as $\Ra(A)=\Ra(TA^*)$ we get $AA^\dag TA^*=TA^*$. By using \eqref{MP respecto to SVD}, after simple matrix computations we obtain
 \[\begin{bmatrix}
   T_1\Sigma^* & 0 \\
   0 & 0
 \end{bmatrix}=\begin{bmatrix}
                T_1 \Sigma ^* & 0 \\
                T_3 \Sigma^* & 0
              \end{bmatrix},\]
whence $T_3=0$ because  $\Sigma$ is nonsingular. In consequence,  from \eqref{eq2} we obtain $T_1^* T_1=I_r$ or equivalently $T_1T_1^*=I_r$ as $T_1$ is a square matrix of order $r$. Therefore, \eqref{eq3} implies $T_2=0$. So, from \eqref{eq5} we get $T_4=T_4T^*_4T_4$ i.e., $T_4$ is a partial isometry. \\
Finally, once again from $A=AT^*T$ we have
   \[A=U\begin{bmatrix}
         \Sigma T^*_1 & 0 \\
         0 & 0 \\
       \end{bmatrix}U^*T=U\begin{bmatrix}
                           D & 0 \\
                           0 & 0
                         \end{bmatrix}U^*T,\] where  $D$ is the matrix $\Sigma T^*_1$ which clearly is nonsingular. \\
(ii) $\Rightarrow $ (iii).
 Write $E=U\begin{bmatrix}
               D & 0 \\
               0 & 0
              \end{bmatrix}U^*$, where $D$ is nonsingular. Clearly $E$ is EP and $A=ET$.
Also, since  $T_1T^*_1=I_r$, it follows that
\[TT^*E=U\begin{bmatrix}
 I_r & 0 \\
 0 & T_4 T_4^*
 \end{bmatrix}\begin{bmatrix}
               D & 0 \\
               0 & 0
              \end{bmatrix}U^*=U\begin{bmatrix}
               D & 0 \\
               0 & 0
              \end{bmatrix}U^*=E.\]
(iii) $\Rightarrow $ (i).
  We assume $A=ET$ and $TT^*E=E$.
As $E$ is EP then $\Ra(E)=\Ra(E^*).$ Thus,
\[\Ra(TA^*)=\Ra(TT^*E^*)=TT^*\Ra(E^*)=TT^*\Ra(E)=\Ra(TT^*E)=\Ra(E),\]
whence
$\Ra(A)=\Ra(ET) \subseteq \Ra(E)=\Ra(TA^*)$. \\ Moreover, $\rank( AT^*)\leq \rank(A).$  Thus,
$\Ra(A)=\Ra(TA^*)$. Also, as $A=ET$ we get $\Nu(T) \subseteq \Nu(ET)=\Nu(A)$. Thus, by Theorem 2.14 (iv) $A$ is $T$-EP.
\end{proof}

\begin{remark}{\rm \label{remark TT*E=E} Note  that under any one of equivalent conditions (i)-(iii) in Theorem \ref{canonical form rect} we have $ETT^*=E$.}
\end{remark}

Theorem  \ref{canonical form rect} (iii) enables us to obtain the following formula for the Moore-Penrose inverse of a matrix.

\begin{theorem} Let $A$, $T$, and $E$ be as in Theorem \ref{canonical form rect}. Then the Moore-Penrose inverse of $A$ is given by
\begin{equation}\label{MP of ET}
A^\dag=T^* E^\dag.
\end{equation}
\end{theorem}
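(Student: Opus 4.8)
The plan is to verify directly that $X := T^*E^\dag$ satisfies the four Penrose conditions for $A = ET$, using the two structural relations from Theorem \ref{canonical form rect}(iii), namely $TT^*E = E$ (and its companion $ETT^* = E$ from Remark \ref{remark TT*E=E}), together with the fact that $E$ is EP (so $EE^\dag = E^\dag E$) and that $T$ is a partial isometry (so $T^\dagger = T^*$, whence $TT^*T = T$ and $T^*TT^* = T^*$). A useful preliminary observation is that $\Ra(A^*) \subseteq \Ra(T^*)$, so by Lemma \ref{lemma 2} we also have $A = AT^*T$; equivalently, since $A = ET$, the identity $ET = ETT^*T$ holds, which is immediate from $ETT^* = E$.

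First I would compute $AXA = ET\,T^*E^\dag\,ET = E(TT^*E)E^\dag ET = E\,E E^\dag ET$. Since $E$ is EP, $EE^\dag E = E$, and then $EE^\dag(ET)$ collapses: more directly, $E E E^\dag E T = E(EE^\dag E)T\,$— wait, the grouping is $E\cdot EE^\dag\cdot ET$, and $EE^\dag E = E$ gives $E\cdot E T$; I then need $E^2 T$ to reduce, which it need not in general, so the correct bracketing is $E(EE^\dag)ET$ and I should instead write $AXA = E(TT^*E)E^\dag(ET) = E\cdot E\cdot E^\dag ET$ and use $E E^\dag E = E$ after noting $E\cdot EE^\dag E T$ — the clean route is $AXA = ETT^*E^\dag ET = E E^\dag E T$ via $TT^*E = E$ applied to the \emph{middle} block $T T^* E^\dag$? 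No: the safe computation uses $TT^*E^\dag = TT^*EE^\dag E^\dag$? The genuinely clean identity is $E^\dag T T^* = E^\dag$ (take $\dagger$/adjoint manipulations on $TT^*E=E$: from $ETT^*=E$ we get $TT^*E^* = E^*$, hence $E^\dag = (E^* )^\dagger$-type reasoning, or more simply $E^\dag E E^\dag = E^\dag$ and $EE^\dag = E^\dagger E$ combined with $ETT^* = E$ yields $E^\dag TT^* = E^\dag E E^\dag T T^* = E^\dag (ETT^*) E^\dag \cdot$ — again grouping trouble). I would therefore establish at the outset the two auxiliary facts $E^\dag T T^* = E^\dag$ and $TT^* E^\dag = E^\dag$, each of which follows from $ETT^* = E$, $TT^*E = E$ by applying $E^\dag$ on the appropriate side and using $E^\dag E = EE^\dag$ together with $E E^\dag E = E$.

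With those two facts in hand the four conditions are routine: $AXA = (ET)(T^*E^\dag)(ET) = E(TT^*E)E^\dag ET = E E^\dag(EET)$? — no, once $TT^*E = E$ is used the expression is $E E^\dag E T = (EE^\dag E)T = ET = A$; here the key is that after substituting $TT^*E=E$ the three $E$'s appear consecutively as $E\cdot E^\dag\cdot E$. Similarly $XAX = T^*E^\dag(ET)(T^*E^\dag) = T^*(E^\dag E)(TT^*E^\dag) = T^*(E^\dag E)E^\dag = T^*(E^\dag E E^\dag) = T^*E^\dag = X$, using $TT^*E^\dag = E^\dag$. For the Hermitian conditions, $AX = ETT^*E^\dag = E E^\dag$, which is Hermitian since $E$ is EP (indeed any Moore--Penrose product $EE^\dag$ is Hermitian), and $XA = T^*E^\dag ET = T^*(E^\dag E)T$; since $E$ is EP, $E^\dag E = EE^\dag$, and one checks $(T^*E^\dag E T)^* = T^*(E^\dag E)^* T = T^*E^\dag E T$ because $E^\dag E$ is Hermitian. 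By uniqueness of the Moore--Penrose inverse, $A^\dag = X = T^*E^\dag$.

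The main obstacle is purely bookkeeping: one must be disciplined about which of the two relations ($TT^*E=E$ versus $ETT^*=E$) is applicable on which side, and it is cleanest to front-load the lemma-level identities $TT^*E^\dag = E^\dag = E^\dag TT^*$ so that the four verifications become one-line substitutions; no deep idea is needed beyond that. One should also note $A = ET$ is consistent with $\rank(A) = \rank(E) = r$ (since $ETT^* = E$ forces $\rank(E)\le\rank(ET)=\rank(A)\le\rank(E)$), so $E^\dag$ and $A^\dag$ both have rank $r$, which is a sanity check rather than a needed step.
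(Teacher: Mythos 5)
Your proposal is correct and follows essentially the same route as the paper: a direct verification of the four Penrose conditions for $X=T^*E^\dagger$ using $A=ET$, $TT^*E=E=ETT^*$, and the hermiticity of $EE^\dagger$ and $E^\dagger E$; the auxiliary identities $TT^*E^\dagger=E^\dagger=E^\dagger TT^*$ that you front-load are valid but not needed, since the paper reduces $AXA$ and $XAX$ simply by grouping the factor $ETT^*$ and applying $ETT^*=E$. The one piece of bookkeeping to fix is that in your $AXA$ computation the identity actually being applied to $ETT^*E^\dagger ET$ is $ETT^*=E$ (not $TT^*E=E$), after which $EE^\dagger ET=ET=A$ as you conclude.
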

\begin{proof}
We check this by direct verification. Let $X:=T^*E^\dag$. As is $T$-EP, Theorem \ref{canonical form rect} and Remark \ref{remark TT*E=E} imply  $A=ET$ and  $E=TT^*E=ETT^*$. Therefore,
\[AXA=ET(T^*E^\dag)ET = (ETT^*)E^\dag ET=ET=A,\]
\[XAX=T^*E^\dag (ET T^*) E^\dag = T^*E^\dag E E^\dag=T^* E^\dag=X.\]
Also we have $AX=(ETT^*)E^\dag=EE^\dag$ and $XA=T^*E^\dag ET$. Clearly, $AX$ and $XA$ are hermitian matrices. So, by uniqueness of the Moore-Penrose inverse we have $X=A^\dag$.
\end{proof}

Some important  consequences of Theorem \ref{canonical form rect} are the following:

\begin{corollary} Let $A$, $T$, $D$, and $U$ be as in Theorem \ref{canonical form rect}. Then the Moore-Penrose inverse of $A$ is given by
\begin{equation}\label{MP respect to T}
A^\dag=T^* U\begin{bmatrix}
               D^{-1} & 0 \\
               0 & 0
              \end{bmatrix} U^*.
\end{equation}
\end{corollary}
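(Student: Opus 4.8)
The plan is to deduce this directly from the formula $A^\dag = T^*E^\dag$ proved in the theorem immediately preceding this corollary (equation \eqref{MP of ET}), where $E = U\begin{bmatrix} D & 0 \\ 0 & 0 \end{bmatrix}U^*$ is the EP matrix furnished by Theorem \ref{canonical form rect}. Once \eqref{MP of ET} is in hand, everything reduces to writing down $E^\dag$ explicitly.

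First I would record that for the block matrix $M := \begin{bmatrix} D & 0 \\ 0 & 0 \end{bmatrix}$ with $D\in\Crr$ nonsingular, one has $M^\dag = \begin{bmatrix} D^{-1} & 0 \\ 0 & 0 \end{bmatrix}$: indeed $MM^\dag = M^\dag M = \begin{bmatrix} I_r & 0 \\ 0 & 0 \end{bmatrix}$ is hermitian and idempotent, and $MM^\dag M = M$, $M^\dag M M^\dag = M^\dag$ are immediate, so all four Penrose conditions hold. Next, since $U$ is unitary, $(UMU^*)^\dag = UM^\dag U^*$; this is again a one-line check of the Penrose equations, using $UU^* = U^*U = I_m$ together with the fact that a unitary conjugate of a hermitian matrix stays hermitian. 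Hence $E^\dag = U\begin{bmatrix} D^{-1} & 0 \\ 0 & 0 \end{bmatrix}U^*$, and substituting this into \eqref{MP of ET} yields exactly \eqref{MP respect to T}.

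I do not expect any real obstacle: the statement is a straightforward corollary, resting only on the standard compatibility of the Moore--Penrose inverse with unitary equivalence and with block-diagonal structure. The one point worth noting --- though it causes no difficulty --- is that $D$ need not be hermitian or normal, but this is irrelevant, since $E^\dag$ depends only on the block form of $E$ and uses nothing about $D$ beyond its invertibility. (Alternatively, one could bypass \eqref{MP of ET} and verify \eqref{MP respect to T} directly, plugging $A = U\begin{bmatrix} D & 0 \\ 0 & 0 \end{bmatrix}U^*T$ and the candidate $X = T^*U\begin{bmatrix} D^{-1} & 0 \\ 0 & 0 \end{bmatrix}U^*$ into the four Penrose identities and invoking $A = AT^*T$, $A = TT^*A$ from Theorem \ref{properties 1}(i), and $T_1T_1^* = I_r$, $T_4T_4^*T_4 = T_4$ from \eqref{A respect to T}; but routing through \eqref{MP of ET} is shorter.)
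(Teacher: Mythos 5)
Your proposal is correct and follows exactly the paper's route: the paper also derives \eqref{MP respect to T} by substituting $E=U\begin{bmatrix} D & 0 \\ 0 & 0 \end{bmatrix}U^*$ into the formula $A^\dag=T^*E^\dag$ of \eqref{MP of ET}, merely leaving the computation of $E^\dag$ implicit where you spell it out. No gaps.
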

\begin{proof}
It directly follows from \eqref{MP of ET} by using the fact that  $E=U\begin{bmatrix}
               D & 0 \\
               0 & 0
              \end{bmatrix} U^*$.
\end{proof}

\begin{corollary}  Let $A, T\in \Cm$ be such that $\rank(A)=r$.  If $A$ is $T$-EP, then there exists a partial isometry $R \in \Cnn$ and  a nonsingular matrix $D$ of order $r$ such that
\begin{equation}\label{T*A}
T^*A= R\begin{bmatrix}
               D & 0 \\
               0 & 0
              \end{bmatrix} R^*.
\end{equation}
\end{corollary}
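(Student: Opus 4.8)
The plan is to read the decomposition off directly from Theorem~\ref{canonical form rect}. Since $A$ is $T$-EP with $\rank(A)=r$, that theorem provides a unitary $U\in\Cmm$, a unitary $V\in\Cnn$ and a nonsingular matrix $D$ of order $r$ such that
\[
A=U\begin{bmatrix} D & 0\\ 0 & 0\end{bmatrix}U^{*}T,\qquad
T=U\begin{bmatrix} T_{1} & 0\\ 0 & T_{4}\end{bmatrix}V^{*},
\]
where $T_{1}T_{1}^{*}=I_{r}$ and $T_{4}T_{4}^{*}T_{4}=T_{4}$. The first observation I would record is that, because $T_{1}$ is a \emph{square} matrix of order $r$, the identity $T_{1}T_{1}^{*}=I_{r}$ actually makes $T_{1}$ unitary; this is the fact that will do all the work.

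Next I would just substitute and multiply out. From the displayed decomposition, $T^{*}=V\begin{bmatrix} T_{1}^{*} & 0\\ 0 & T_{4}^{*}\end{bmatrix}U^{*}$ and $U^{*}T=\begin{bmatrix} T_{1} & 0\\ 0 & T_{4}\end{bmatrix}V^{*}$, so in the product $T^{*}A$ the inner factors $U^{*}U$ cancel and the block arithmetic collapses to
\[
T^{*}A=V\begin{bmatrix} T_{1}^{*} & 0\\ 0 & T_{4}^{*}\end{bmatrix}\begin{bmatrix} D & 0\\ 0 & 0\end{bmatrix}\begin{bmatrix} T_{1} & 0\\ 0 & T_{4}\end{bmatrix}V^{*}=V\begin{bmatrix} T_{1}^{*}DT_{1} & 0\\ 0 & 0\end{bmatrix}V^{*}.
\]
I would then set $R:=V$, which is unitary and hence in particular a partial isometry in $\Cnn$, and take as the nonsingular matrix of order $r$ in the statement the matrix $T_{1}^{*}DT_{1}$, which is nonsingular since $T_{1}$ is unitary and $D$ is nonsingular. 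This is precisely the form~\eqref{T*A}.

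The computation is entirely routine, so there is no real obstacle; the only point that needs a moment's care is the one isolated in the first paragraph — that $T_{1}T_{1}^{*}=I_{r}$ with $T_{1}$ of size $r\times r$ promotes $T_{1}$ from a partial isometry to a unitary matrix, which is exactly what guarantees $T_{1}^{*}DT_{1}$ is nonsingular and what allows a single unitary $R=V$ to appear on both sides. (One could equivalently phrase the corollary as saying that $T^{*}A$ is an EP matrix of rank $r$, since a matrix admits a decomposition of the type $R\,\mathrm{diag}(D,0)\,R^{*}$ with $R$ unitary and $D$ nonsingular exactly when it is EP; the argument above simply exhibits such a decomposition explicitly.)
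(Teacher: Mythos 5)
Your proof is correct, and it finishes the argument along a genuinely different (and slightly sharper) line than the paper. The paper's own proof is a one-liner: it premultiplies $A=U\,\mathrm{diag}(D,0)\,U^{*}T$ by $T^{*}$ and takes $R:=T^{*}U$, which is a partial isometry because $T$ is, keeping the same nonsingular block $D$. You instead substitute the block form $T=U\,\mathrm{diag}(T_{1},T_{4})V^{*}$ from Theorem~\ref{canonical form rect} and arrive at $R:=V$ with nonsingular core $T_{1}^{*}DT_{1}$; the key observation that $T_{1}T_{1}^{*}=I_{r}$ makes the square matrix $T_{1}$ unitary is exactly right and is already established inside the proof of that theorem. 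Your route costs one extra block multiplication but buys two things. First, your $R=V$ is unitary, not merely a partial isometry, so your decomposition exhibits $T^{*}A$ as an EP matrix of rank $r$ (via the Pearl-type characterization recovered after Theorem~\ref{canonical form}), which the paper's conclusion with a partial isometry $R$ does not immediately yield. Second, your $R$ genuinely lies in $\Cnn$ with an $n\times n$ middle block matrix, matching the statement of~\eqref{T*A} as written, whereas the paper's $R=T^{*}U$ is $n\times m$ and its middle block matrix is $m\times m$, so for $m\neq n$ the paper's proof only matches the statement up to a harmless dimension slip.
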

\begin{proof} Let $A$ be $T$-EP. By Theorem \ref{canonical form rect}, there exists a nonsingular matrix $D$ of order $r$ and a unitary matrix $U$ such that
$A= U\begin{bmatrix}
               D & 0 \\
               0 & 0
              \end{bmatrix} U^*T$.
Pre-multiplying the previous equality by $T^*$ and  by taking $R:=T^*U$ we obtain  \eqref{T*A}. Clearly, $R$ is
a partial isometry.
\end{proof}

\begin{theorem}\label{characterization 2} Let $A, T\in \Cm$.  Then $A$ is $T$-EP  if and only if  $TA^\dag A=AA^\dag T$ and $A=AT^*T$.
\end{theorem}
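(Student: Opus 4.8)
The plan is to prove both implications by using the characterizations already established, principally Theorem~\ref{characterization 1} and Lemma~\ref{lemma 2}, together with the elementary observation that $A=AT^*T$ is equivalent to $\Nu(T)\subseteq\Nu(A)$, and—once that equation is in force—to the identity $A^\dag=A^\dag TT^*$ being replaceable by the companion relation $A^\dag=T^*TA^\dag$ from Lemma~\ref{lemma 2}. Throughout, the identity $AA^\dag=$ (orthogonal projector onto $\Ra(A)$) and $A^\dag A=$ (orthogonal projector onto $\Ra(A^*)$) will be the workhorses, since the stated condition $TA^\dag A=AA^\dag T$ is a statement about how $T$ intertwines these two projectors.

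First I would prove the forward direction. Assume $A$ is $T$-EP. Then $A=AT^*T$ is immediate from Definition~\ref{def TEP}, so only $TA^\dag A=AA^\dag T$ needs to be shown. By Theorem~\ref{properties 1}(i), $A=TT^*A$, so $\Ra(A)\subseteq\Ra(T)$ and, by Lemma~\ref{lemma 1}, $A^\dag=A^\dag TT^*$; by Lemma~\ref{lemma 2}, $A^\dag=T^*TA^\dag$. I would then compute $AA^\dag T=AA^\dag TT^*T$ using $T=TT^*T$; since $AA^\dag$ is the projector onto $\Ra(A)\subseteq\Ra(T)=\Ra(TT^*)$ one has $AA^\dag=TT^*AA^\dag$ (the projectors commute because $\Ra(A)$ is contained in the range of the projector $TT^*$), hence $AA^\dag T=TT^*AA^\dag TT^*T=T(T^*AA^\dag T)$, which is visibly of the form $T(\cdot)$. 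To match it with $TA^\dag A$ I would instead exploit Theorem~\ref{characterization 1}(v): $\Ra(A)=\Ra(TA^*)$, so $AA^\dag$ and the projector onto $\Ra(TA^*)$ coincide, i.e. $AA^\dag TA^*=TA^*$; transposing and multiplying appropriately, together with $A^\dag=T^*TA^\dag$, should collapse $AA^\dag T$ and $TA^\dag A$ to the common value $TA^\dag A$. Concretely, from $AA^\dag TA^* = TA^*$ take conjugate transposes to get $AT^*A A^\dag = AT^*$ (using $(AA^\dag)^*=AA^\dag$), then right-multiply by $(A^\dag)^*$-type factors; alternatively, use the canonical form of Theorem~\ref{canonical form rect} to verify the identity blockwise, which is the cleanest route.

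For the converse, assume $TA^\dag A=AA^\dag T$ and $A=AT^*T$. The second equation gives $\Nu(T)\subseteq\Nu(A)$ by Lemma~\ref{lemma 2}, so by Theorem~\ref{characterization 1}(iv) it remains only to show $\Ra(A)=\Ra(TA^*)$. From $A=AT^*T$ one gets $A^\dag=T^*TA^\dag$ (Lemma~\ref{lemma 2}(iv)), hence $A^\dag A=T^*TA^\dag A$, and right-multiplying the hypothesis $TA^\dag A=AA^\dag T$ by suitable factors I would derive $\Ra(TA^*)\subseteq\Ra(A)$ and the reverse rank inequality $\rk(A)=\rk(A^\dag A)=\rk(TA^\dag A)=\rk(AA^\dag T)\le\rk(T A^*)$ after noting $\Ra(AA^\dag T)=\Ra(A A^\dag T)\subseteq \Ra(AA^\dag)=\Ra(A)$ and that $\rk(TA^\dag A)=\rk(TA^*)$ because $A^\dag A$ and $A^\dag=T^*TA^\dag$ have the same range as $A^*$. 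Combining $\Ra(TA^*)\subseteq\Ra(A)$ with $\rk(TA^*)\ge\rk(A)$ forces $\Ra(A)=\Ra(TA^*)$, and Theorem~\ref{characterization 1}(iv) then yields that $A$ is $T$-EP.

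The main obstacle will be the bookkeeping in showing $\Ra(TA^*)\subseteq\Ra(A)$ from the intertwining hypothesis: one must pass from "$T$ conjugates the projector $A^\dag A$ to the projector $AA^\dag$" to a statement about $\Ra(TA^*)$, and the natural temptation to simply write $TA^* = T(A^\dag A)A^* \cdot(\text{something})$ needs the relation $A^* = A^\dag A A^* $ (true) plus the hypothesis in the form $TA^\dag A = AA^\dag T$, giving $TA^* = TA^\dag A A^* = A A^\dag T A^*$, which lies in $\Ra(A)$—so in fact this step is short once one remembers $A^*=A^\dag AA^*$. The genuinely delicate point is the rank count ensuring equality rather than mere inclusion; here invoking $\rk(TA^*)=\rk(TA^\dag A) = \rk(AA^\dag T)$ and that $AA^\dag T$ has the same rank as $A$ (which requires $\Ra(T^*)\supseteq\Ra(A^*)$, available from $A=AT^*T$ via Lemma~\ref{lemma 2}(iii)) closes the argument. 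Should the direct manipulation prove unwieldy, I would fall back on the canonical form \eqref{A respect to T} and simply check both the hypothesis and the conclusion in block form, where everything reduces to $T_1T_1^*=I_r$ and the nonsingularity of $D$.
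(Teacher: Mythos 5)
Your proposal is essentially correct and follows the paper's own route: the converse is exactly the paper's argument (right-multiply $TA^\dag A=AA^\dag T$ by $A^*$ to get $TA^*=AA^\dag TA^*\in$ the column space of $A$, then the rank count $\rk(TA^*)=\rk(AT^*)\ge\rk(AT^*T)=\rk(A)$), and the forward direction, once you invoke the canonical form of Theorem~\ref{canonical form rect} --- equivalently the factorization $A=ET$ with $E$ EP and $E=TT^*E=ETT^*$, $A^\dag=T^*E^\dag$ --- reduces to the paper's one-line computation $AA^\dag T=EE^\dag T=TT^*E^\dag ET=TA^\dag A$. The only soft spots are that your first-attempt direct manipulation of the forward implication is left unfinished (the ``should collapse'' step) and that the assertion $\rk(AA^\dag T)=\rk(A)$ is, as phrased, the thing being proved rather than a lemma; both are cured by the fallbacks you yourself name, so no genuine gap remains.
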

\begin{proof} In order to prove the necessity of the condition it is enough to show that $TA^\dag A=AA^\dag T$ because  clearly $A=AT^*T$ holds by definition of $T$-EP matrix.   Thus, by  Theorem \ref{canonical form rect} and Remark \ref{remark TT*E=E}, there exists an EP matrix $E$ such that  $A=ET$ and $E=TT^*E=ETT^*$. Moreover, \eqref{MP of ET} implies $A^\dag=T^*E^\dag$. Consequently,  as $EE^\dag=E^\dag E$ we obtain
\[A A^\dag  T= ETT^*E^\dag T=EE^\dag T=TT^*EE^\dag T=TT^*E^\dag ET=TA^\dag A.\]
Conversely, we assume that $TA^\dag A=AA^\dag T$ and $A=AT^*T$ hold. Consider $TA^\dag A=AA^\dag T.$ Multiplying on right by $A^*,$ we have $TA^*=AA^\dag T A^*.$ This implies $\Ra(TA^*)\subseteq \Ra(A)$. Since $\rank(A)=\rank(AT^*T)\le \rank(AT^*)=\rank((AT^*)^*) = \rank(TA^*)$,  we get
$\Ra(A)=\Ra(TA^*).$ Now, from Theorem \ref{characterization 1} (v) it follows that $A$ is $T$-EP.
\end{proof}

\begin{corollary}\label{characterization 3} Let $A, T\in \Cm$.  Then $A$ is $T$-EP  if and only if  $TA^\dag A=AA^\dag T$ and any one of equivalent conditions (i)-(iv) in Lemma \ref{lemma 2} hold.
\end{corollary}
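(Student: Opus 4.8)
The plan is to leverage Theorem \ref{characterization 2}, which already establishes that $A$ is $T$-EP if and only if $TA^\dag A = AA^\dag T$ and $A = AT^*T$. Since Corollary \ref{characterization 3} replaces the single condition $A = AT^*T$ with ``any one of the equivalent conditions (i)-(iv) in Lemma \ref{lemma 2}'', the entire content of the proof reduces to invoking Lemma \ref{lemma 2}, whose statement asserts precisely that (i) $A = AT^*T$, (ii) $\Nu(T) \subseteq \Nu(A)$, (iii) $\Ra(A^*) \subseteq \Ra(T^*)$, and (iv) $A^\dag = T^*TA^\dag$ are all equivalent.

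The proof I would write is therefore essentially a one-line deduction: by Lemma \ref{lemma 2}, each of conditions (i)-(iv) is equivalent to $A = AT^*T$, so the statement ``$TA^\dag A = AA^\dag T$ and $A = AT^*T$'' is equivalent to ``$TA^\dag A = AA^\dag T$ and (any one of (i)-(iv) in Lemma \ref{lemma 2})''; now apply Theorem \ref{characterization 2}. I would phrase it roughly as: ``By Lemma \ref{lemma 2}, the condition $A = AT^*T$ is equivalent to each of the conditions (i)-(iv) therein. The result now follows immediately from Theorem \ref{characterization 2}.''

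There is essentially no obstacle here; the only thing to be mildly careful about is the logical bookkeeping that ``$P$ and $Q$'' is equivalent to ``$P$ and $Q'$'' whenever $Q \Leftrightarrow Q'$, which is trivial. One might also note, for completeness, that condition (i) of Lemma \ref{lemma 2} \emph{is} $A = AT^*T$, so the corollary genuinely subsumes Theorem \ref{characterization 2} as the special case and merely records the three further reformulations (via null spaces, range spaces, and Moore-Penrose inverses) as equally valid substitutes. No new computation, decomposition, or estimate is needed.
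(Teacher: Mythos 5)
Your proposal is correct and matches the paper's (implicit) argument exactly: the corollary is stated without proof precisely because it follows by substituting, via Lemma \ref{lemma 2}, any of the equivalent conditions (i)--(iv) for $A=AT^*T$ in Theorem \ref{characterization 2}. Nothing further is needed.
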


\begin{theorem}\label{characterization 4} Let $A, T\in \Cm$. Then the following are equivalent:
\begin{enumerate}[\rm(i)]
\item $A$ is $T$-EP;
\item  $A^*$ is $T^*$-EP;
\item $A^\dag$ is $T^*$-EP.
\end{enumerate}
\end{theorem}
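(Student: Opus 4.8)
The plan is to prove the three-way equivalence by exploiting the symmetry between $A$ and $A^*$, and between $T$ and $T^*$, together with the known properties of the Moore-Penrose inverse under transposition. I would first establish the equivalence (i) $\Leftrightarrow$ (ii), and then (i) $\Leftrightarrow$ (iii), since both follow the same pattern: rewrite the defining conditions of $T$-EP from Definition \ref{def TEP} (or, more conveniently, one of the equivalent forms in Theorem \ref{characterization 1}) and check that they are stable under the relevant operation. The cleanest route is to use the characterization from Theorem \ref{characterization 1}(v), namely that $A$ is $T$-EP iff $\Ra(A)=\Ra(TA^*)$ and $A=AT^*T$; alternatively the canonical form in Theorem \ref{canonical form rect}(iii) gives a very direct argument.

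For (i) $\Leftrightarrow$ (ii): Suppose $A$ is $T$-EP. By Theorem \ref{canonical form rect}(iii) there is an EP matrix $E$ with $A=ET$ and $TT^*E=E$ (and by Remark \ref{remark TT*E=E} also $ETT^*=E$). Taking conjugate transposes gives $A^*=T^*E^*$, and since $E$ is EP, $E^*$ is also EP; moreover $TT^*E=E$ becomes $E^*TT^*=E^*$ and $ETT^*=E$ becomes $TT^*E^*=E^*$. Writing $S:=T^*$ (which is again a partial isometry since $T^\dag=T^*$ implies $S^\dag = S^*$), these read $A^*=E^*S$ and $SS^*E^*=E^*$, which is exactly the statement that $A^*$ is $S$-EP, i.e. $T^*$-EP, by Theorem \ref{canonical form rect}(iii) applied to $A^*$ and $T^*$. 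The converse is obtained by replacing $A$ with $A^*$ and $T$ with $T^*$ and using $(A^*)^*=A$, $(T^*)^*=T$; this symmetry also shows it suffices to prove one direction.

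For (i) $\Leftrightarrow$ (iii): Again invoke Theorem \ref{canonical form rect}(iii): $A$ is $T$-EP iff $A=ET$ with $E$ EP and $E=TT^*E=ETT^*$. Using the formula $A^\dag=T^*E^\dag$ from \eqref{MP of ET}, and the facts that $E^\dag$ is EP whenever $E$ is (since $(E^\dag)^*=(E^*)^\dag$ and $\Ra(E^\dag)=\Ra(E^*)$, $\Ra((E^\dag)^*)=\Ra(E)$, which coincide), I would verify that $A^\dag = E^\dag T^*$ does \emph{not} quite hold — rather $A^\dag = T^* E^\dag$ — so the natural EP matrix for $A^\dag$ relative to $T^*$ is $F:=E^\dag$, and one needs $A^\dag = F T^*$ together with $T^*(T^*)^*F = T^* T F = F$ and $F T^* (T^*)^* = F T^* T = F$. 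The conditions $TT^*E=E=ETT^*$ do not obviously transfer to $T^*TE^\dag=E^\dag$; here I expect to need the complementary fact that $A=AT^*T$ (part of being $T$-EP) together with Lemma \ref{lemma 2}(iv), which gives $A^\dag=T^*TA^\dag$, hence $T^*E^\dag = A^\dag = T^*T A^\dag = T^*T T^* E^\dag = T^* E^\dag$ after simplification — so one must be careful to extract $T^*TE^\dag = E^\dag$ and $E^\dag T^* T = E^\dag$ cleanly. This coupling between the two defining conditions is the main obstacle: the range condition and the ``$A=AT^*T$'' condition each transform under $\dagger$ in a way that mixes $T$ and $T^*$, so the bookkeeping must be done carefully rather than by a single slick substitution.

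An alternative, and possibly smoother, approach for (iii) is to use Theorem \ref{characterization 2}: $A$ is $T$-EP iff $TA^\dag A = AA^\dag T$ and $A=AT^*T$. Applying this characterization to the pair $(A^\dag, T^*)$ requires checking $T^*(A^\dag)^\dag A^\dag = A^\dag (A^\dag)^\dag T^*$, i.e. $T^* A A^\dag = A^\dag A T^*$, together with $A^\dag = A^\dag (T^*)^* T^* = A^\dag T T^*$. The first of these is just the conjugate transpose of $TA^\dag A = AA^\dag T$ (using that both $AA^\dag$ and $A^\dag A$ are Hermitian), so it holds; the second, $A^\dag = A^\dag TT^*$, is exactly Lemma \ref{lemma 1}(iv), which holds because $A=TT^*A$ by Theorem \ref{properties 1}(i). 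Thus (i) $\Rightarrow$ (iii) falls out, and the reverse direction follows by applying the same reasoning to $A^\dag$ in place of $A$ and $T^*$ in place of $T$, noting $(A^\dag)^\dag = A$. I would present the proof in this last form for (iii) and the canonical-form argument for (ii), remarking at the outset that by the built-in $(A,T)\mapsto(A^*,T^*)$ and $(A,T)\mapsto(A^\dag,T^*)$ symmetries it is enough to prove the forward implications.
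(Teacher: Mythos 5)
Your treatment of (i)$\Leftrightarrow$(iii) via Theorem \ref{characterization 2} is correct and coincides with the paper's own argument: $T^*AA^\dag=A^\dag AT^*$ is the conjugate transpose of $AA^\dag T=TA^\dag A$, and $A^\dag=A^\dag TT^*$ follows from Lemma \ref{lemma 1}(iv) together with $A=TT^*A$ (Theorem \ref{properties 1}(i)); the reverse implication by the substitution $(A,T)\mapsto(A^\dag,T^*)$ and $(A^\dag)^\dag=A$ is also how the paper closes the loop.

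The canonical-form argument you elect to present for (i)$\Rightarrow$(ii), however, has a genuine gap. Transposing $A=ET$ gives $A^*=T^*E^*$, with the partial isometry on the \emph{left}; but Theorem \ref{canonical form rect}(iii) applied to the pair $(A^*,T^*)$ requires a factorization $A^*=FT^*$ with the EP factor $F$ on the left and $F\in\Cnn$. Your candidate $F=E^*$ cannot serve: it lies in $\Cmm$ rather than $\Cnn$, the product $E^*T^*$ is not even defined when $m\neq n$, and even in the square case $T^*E^*\neq E^*T^*$ in general. There is a matching mismatch in the second condition: with $S=T^*$ you would need $SS^*F=T^*TF=F$, whereas transposing $ETT^*=E$ only yields $TT^*E^*=E^*$. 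The step can be repaired, e.g.\ by taking $F:=T^*E^*T$, for which $FT^*=T^*E^*TT^*=T^*E^*=A^*$ (using $E^*=E^*TT^*$) and $T^*TF=F$ (using $T^*TT^*=T^*$), and then verifying that $F$ is EP from the block forms in Theorem \ref{canonical form rect}(ii): one finds $F=V\bigl[\begin{smallmatrix}T_1^*D^*T_1&0\\0&0\end{smallmatrix}\bigr]V^*$ with $T_1$ unitary and $T_1^*D^*T_1$ nonsingular. But as written the argument does not go through. The simpler fix is to prove (i)$\Leftrightarrow$(ii) by the same Theorem \ref{characterization 2} computation you already carried out for (iii) — which is precisely what the paper does: for the pair $(A^*,T^*)$ the two conditions read $T^*AA^\dag=A^\dag AT^*$ and $A^*=A^*TT^*$, both immediate consequences of $A$ being $T$-EP.
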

\begin{proof}  (i)$\Rightarrow$(ii). By Theorem \ref{characterization 2}, it is enough to establish
$T^*(A^*)^\dag A^*=A^*(A^*)^\dag T^*$ and $A^*=A^* (T^*)^* T^*=A^*TT^*$. As $A$ is $T$-EP, from Theorem \ref{characterization 2}  we have $AA^\dag T=TA^\dag A$ and $A=AT^*T$. Moreover, Theorem \ref{properties 1} (i) implies $A= TT^* A$, and by taking $*$ on both sides we have  $A^*=A^* T T^*$. It remains to prove that $T^*(A^*)^\dag A^*=A^*(A^*)^\dag T^*$ but this equality is true if and only if $AA^\dag T=TA^\dag A$. This shows (ii).\\
(ii)$\Rightarrow$(i). It is similar to the previous proof by interchanging the roles of $A$ and $A^*$. \\
(i)$\Rightarrow$(iii). By Theorem \ref{characterization 2}, we know that $A$ is $T$-EP  implies  $AA^\dag T=TA^\dag A$ and $A=A T^*T$.  Moreover, Theorem \ref{properties 1} (i) implies $A= TT^* A$, which is equivalent to $A^\dag=A^\dag TT^*$ by Lemma \ref{lemma 1}.
As $TA^\dag A=AA^\dag T,$ taking $*$ on both sides, we have $A^\dag A T^*= T^* AA^\dag.$ Using  $A=(A^\dag)^\dag$  we have $A^\dag (A^\dag)^\dag T^*= T^* (A^\dag)^\dag A^\dag$. So, once again  Theorem \ref{characterization 2} implies that $A^\dag$ is $T^*$-EP.\\
(iii)$\Rightarrow$(i). Along lines of (ii)$\Rightarrow$(i).
 \end{proof}

\begin{theorem}\label{characterization 1 bis} Let $A,T \in \Cm$. Then the following are equivalent:
\begin{enumerate}[\rm (i)]
\item $A$ is $T$-EP;
\item $AT^*$ is EP and $A=AT^*T$;
\item $TA^*$ is EP and $A=AT^*T$;
\item $TA^\dag$ is EP and $A=AT^*T$.
\end{enumerate}
\end{theorem}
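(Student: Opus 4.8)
The plan is to prove Theorem \ref{characterization 1 bis} by treating the three equivalences $(i)\Leftrightarrow(ii)$, $(i)\Leftrightarrow(iii)$, $(i)\Leftrightarrow(iv)$ in a parallel fashion, each time reducing the "EP" hypothesis on the auxiliary matrix to an identity that we can match against the characterizations of $T$-EP already in hand (principally Theorem \ref{characterization 1}, Theorem \ref{characterization 2}, and Theorem \ref{properties 1}). The common ground condition $A=AT^*T$ appears in every item, so the real content is always the interplay between the EP-ness of $AT^*$, $TA^*$, or $TA^\dag$ and the range condition $\Ra(A)=\Ra(TA^*)$ (equivalently $\Ra(A)=\Ra(TA^*T)$).

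For $(i)\Rightarrow(ii)$ I would use Theorem \ref{canonical form rect}(iii): since $A$ is $T$-EP there is an EP matrix $E$ with $A=ET$ and $E=TT^*E=ETT^*$ (the last equality from Remark \ref{remark TT*E=E}). Then $AT^*=ETT^*=E$, which is EP, and $A=AT^*T$ holds by definition; this disposes of the forward direction instantly. For the converse $(ii)\Rightarrow(i)$, assume $AT^*$ is EP and $A=AT^*T$. From $A=AT^*T$ we get $\rk(A)\le\rk(AT^*)\le\rk(A)$, so $\rk(AT^*)=\rk(A)$, and moreover $A=AT^*T=(AT^*)T$ shows $\Ra(A)\subseteq\Ra(AT^*)$, hence $\Ra(A)=\Ra(AT^*)$. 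Because $AT^*$ is EP, $\Ra(AT^*)=\Ra((AT^*)^*)=\Ra(TA^*)$, so $\Ra(A)=\Ra(TA^*)$; combined with $A=AT^*T$ this is exactly condition (v) of Theorem \ref{characterization 1}, so $A$ is $T$-EP.

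For $(i)\Leftrightarrow(iii)$ I would run the same argument on $TA^*$: from $A=ET$, $E=TT^*E$ one gets $TA^*=TT^*E^*=E^*$ (using that $E$ EP implies $E^*$ EP, and $TT^*E^*=(ETT^*)^*=E^*$), which is EP; conversely $TA^*$ EP gives $\Ra(TA^*)=\Ra((TA^*)^*)=\Ra(AT^*)$, and then proceed as before. Alternatively, note $(iii)$ is just $(ii)$ applied to the transpose data via Theorem \ref{characterization 4}, since $A$ is $T$-EP iff $A^*$ is $T^*$-EP, and $(A^*)(T^*)^*=A^*T=(TA^*)^*$ is EP iff $TA^*$ is EP. For $(i)\Leftrightarrow(iv)$, I would use Theorem \ref{properties 1}(iv), which gives $(AT^*)^\dag=TA^\dag$ when $A$ is $T$-EP; since a matrix is EP iff its Moore–Penrose inverse is EP, the already-proved equivalence $(i)\Leftrightarrow(ii)$ immediately yields that $TA^\dag=(AT^*)^\dag$ is EP together with $A=AT^*T$. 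For the converse, assume $TA^\dag$ is EP and $A=AT^*T$; taking Moore–Penrose inverses and using Lemma \ref{lemma 2} (whose condition (iv) is equivalent to $A=AT^*T$), one recovers that $(TA^\dag)^\dag = AT^*$ up to the constraints already imposed, so $AT^*$ is EP and $(ii)$ holds. The one point needing care here is that $(TA^\dag)^\dag=AT^*$ is not automatic without some relation between $\Ra(A)$ and $\Ra(T)$; I would therefore first extract from "$TA^\dag$ EP" a rank equality $\rk(TA^\dag)=\rk(A)$ and a range inclusion that together force $A=TT^*A$, after which the reverse-order law for the pseudoinverse of the product $TA^\dag$ becomes valid.

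The main obstacle I anticipate is precisely this last converse, $(iv)\Rightarrow(i)$ (or equivalently $(iv)\Rightarrow(ii)$): the hypotheses $TA^\dag$ EP and $A=AT^*T$ do not obviously give control over $\Ra(A)$ versus $\Ra(T)$, and the manipulations with $(TA^\dag)^\dag$ require such control. I expect the resolution to be a short rank chase — from $A=AT^*T$ one has $\rk(A)\le\rk(TA^*)=\rk(AT^*)$, and EP-ness of $TA^\dag$ pins down $\rk(TA^\dag)=\rk(A^\dag)=\rk(A)$, which via $TA^\dag=TA^\dag AA^\dag$ and $\rk(TA^\dag A)\le\rk(TA^*T)$-type inequalities should force the inclusion $\Ra(A)\subseteq\Ra(T)$, hence $A=TT^*A$ by Lemma \ref{lemma 1}, at which point the product rule $(TA^\dag)^\dag=(A^\dag)^\dag T^\dag\cdot(\text{stuff})$ simplifies cleanly and we land back in case $(ii)$.
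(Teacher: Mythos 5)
Your treatment of (i)$\Leftrightarrow$(ii), (ii)$\Leftrightarrow$(iii) and (i)$\Rightarrow$(iv) is correct and essentially the paper's argument: the forward direction via Theorem \ref{canonical form rect}(iii) with $AT^*=ETT^*=E$, the converse of (ii) by identifying $\Ra(A)=\Ra(AT^*)=\Ra(TA^*)$ and invoking Theorem \ref{characterization 1}(v), the passage to (iii) via $TA^*=(AT^*)^*$, and (i)$\Rightarrow$(iv) via $(AT^*)^\dag=TA^\dag$ (Theorem \ref{properties 1}(iv)) together with the fact that a square matrix $B$ is EP if and only if $B^\dag$ is.

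The genuine gap is (iv)$\Rightarrow$(i), which you correctly flag but do not close, and two of the ingredients you propose for closing it are off. First, EP-ness of $TA^\dag$ does not ``pin down'' $\rk(TA^\dag)=\rk(A)$: an EP matrix can have any rank, and the equality actually comes from the other hypothesis, since $A=AT^*T$ gives $A^\dag=T^*TA^\dag$ by Lemma \ref{lemma 2}(iv), whence $\rk(A)=\rk(A^\dag)=\rk(T^*TA^\dag)\le\rk(TA^\dag)\le\rk(A^\dag)=\rk(A)$. Second, no reverse-order law or computation of $(TA^\dag)^\dag$ is needed, and pursuing one is a detour that itself requires what you are trying to prove. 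The clean finish is: $(TA^\dag)^*=(A^*)^\dag T^*$ has range contained in $\Ra((A^*)^\dag)=\Ra(A)$ and has rank $\rk(A)$ by the above, so $\Ra((TA^\dag)^*)=\Ra(A)$; EP-ness of $TA^\dag$ then gives $\Ra(TA^\dag)=\Ra((TA^\dag)^*)=\Ra(A)$; finally $\Ra(TA^\dag)=T\Ra(A^\dag)=T\Ra(A^*)=\Ra(TA^*)$, so $\Ra(A)=\Ra(TA^*)$, and together with $A=AT^*T$ Theorem \ref{characterization 1}(v) yields that $A$ is $T$-EP. (For what it is worth, the paper's own proof only records (i)$\Rightarrow$(iv) and asserts ``the equivalence'' from $(AT^*)^\dag=TA^\dag$, an identity it established only for $T$-EP matrices; so your instinct that this converse needs a separate, non-circular argument is sound.)
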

\begin{proof} (i)$\Rightarrow$(ii). We assume  $A$ is $T$-EP. Then by  Theorem \ref{canonical form rect} and Remark \ref{remark TT*E=E}, there exists an EP matrix $E$ such that $A=ET$ and $E=ETT^*=TT^*E$. Therefore $AT^*=ETT^*=E$, that is, $AT^*$ is EP. Clearly, $A=AT^*T$ by definition of $T$-EP matrix. \\
(ii) $\Rightarrow$ (i). We consider  $AT^*$ is EP and $A=AT^*T$.  Taking $E=AT^*$, from $A=AT^*T$  we have $A=ET$, where $E$ is clearly an EP matrix. Moreover, as $\Ra(E)=\Ra(E^*)$ we have $\Ra(E)\subseteq \Ra(T)$, which is equivalent to $TT^*E=E$. Now, by using Theorem \ref{canonical form rect} it follows that $A$ is $T$-EP.\\
(ii) $\Leftrightarrow$ (iii). It is a direct consequence of the fact that a square complex matrix $B$ is EP if and only if $B^*$ is EP. \\
(i) $\Rightarrow$ (iv). We assume  $A$ is $T$-EP. By Theorem \ref{properties 2} (v) we have $(AT^*)^\dag=TA^\dag$. Since a matrix $B$ is EP if and only  $B^\dag$ is EP, we have the equivalence.
\end{proof}

Some more properties of $T$-EP matrices are established in the following theorem:

\begin{theorem}\label{properties 2} Let $A\in \Cm$ be a $T$-EP matrix, then the following hold:
\begin{enumerate}[{\rm (i)}]
\item $(TA^*)^\dag=(A^*)^\dag  T^*$;
\item $(A^*T)^\dag= T^* (A^*)^\dag$;
\item $(TA^*T)^\dag=T^*(A^*)^\dag T^*$;
\item $(TA^\dag)^\dag=A  T^*$;
\item $(A^\dag T)^\dag= T^* A$;
\item $(TA^\dag T)^\dag=T^*A T^*$.
\end{enumerate}
\end{theorem}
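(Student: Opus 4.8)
The plan is to obtain all six identities as immediate consequences of Theorem \ref{properties 1}, using the ``symmetry'' of the $T$-EP property recorded in Theorem \ref{characterization 4}, rather than verifying the four Penrose conditions afresh in each case. Concretely: since $A$ is $T$-EP, Theorem \ref{characterization 4}(ii) says $A^*$ is $T^*$-EP, and Theorem \ref{characterization 4}(iii) says $A^\dag$ is $T^*$-EP; so one may apply Theorem \ref{properties 1}(iii)--(v) with the pair $(A^*,T^*)$ and then with the pair $(A^\dag,T^*)$ in place of $(A,T)$, and simplify.

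For items (i)--(iii) I would apply Theorem \ref{properties 1}(iii)--(v) to $(A^*,T^*)$. Using $(T^*)^* = T$, part (iii) of Theorem \ref{properties 1} gives $\bigl((T^*)^*A^*\bigr)^\dag = (A^*)^\dag T^*$, i.e.\ $(TA^*)^\dag = (A^*)^\dag T^*$, which is (i); part (iv) gives $\bigl(A^*(T^*)^*\bigr)^\dag = T^*(A^*)^\dag$, i.e.\ $(A^*T)^\dag = T^*(A^*)^\dag$, which is (ii); and part (v) gives $\bigl((T^*)^*A^*(T^*)^*\bigr)^\dag = T^*(A^*)^\dag T^*$, i.e.\ $(TA^*T)^\dag = T^*(A^*)^\dag T^*$, which is (iii). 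For items (iv)--(vi) I would instead apply the same three parts of Theorem \ref{properties 1} to $(A^\dag,T^*)$ and use the involution $(A^\dag)^\dag = A$: part (iii) gives $(TA^\dag)^\dag = (A^\dag)^\dag T^* = AT^*$, which is (iv); part (iv) gives $(A^\dag T)^\dag = T^*(A^\dag)^\dag = T^*A$, which is (v); and part (v) gives $(TA^\dag T)^\dag = T^*(A^\dag)^\dag T^* = T^*AT^*$, which is (vi).

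I do not expect a genuine obstacle: every ingredient is already available, and the only point needing a little care is the bookkeeping in the substitution $(A,T)\mapsto(A^*,T^*)$ or $(A,T)\mapsto(A^\dag,T^*)$ — making sure $(T^*)^*$ collapses back to $T$, that $(A^*)^\dag$ is left untouched, and that $(A^\dag)^\dag$ simplifies to $A$. If one prefers a self-contained argument, each identity can alternatively be checked directly against the four Penrose conditions: for instance, to verify (i) one sets $X := (A^*)^\dag T^*$ and uses $A^* = A^*TT^* = T^*TA^*$ (the conjugate transposes of $A = TT^*A$ from Theorem \ref{properties 1}(i) and $A = AT^*T$ from the definition) together with $\Ra(A) = \Ra(TA^*)$ from Theorem \ref{characterization 1}(v); but this route is noticeably longer and yields no additional insight, so I would present the short reduction above.
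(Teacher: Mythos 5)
Your proposal is correct and is essentially the paper's own proof: the paper disposes of this theorem in one line as "a direct consequence of Theorem \ref{properties 1} and Theorem \ref{characterization 4}", which is exactly the reduction you carry out, applying Theorem \ref{properties 1}(iii)--(v) to the pairs $(A^*,T^*)$ and $(A^\dag,T^*)$. Your careful bookkeeping of the substitutions (and the implicit fact that $T^*$ is again a partial isometry, so the cited theorems apply) is just a fuller writing-out of the same argument.
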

\begin{proof} It is a direct consequence from Theorem \ref{properties 1} and Theorem \ref{characterization 4}.
\end{proof}

We note that for square matrices we can obtain a canonical form from Theorem \ref{canonical form rect} by simply taking $m=n.$ However, we present another canonical form a square matrix $A$ for which we use the famous Hartwig-Spindelb\"ock decomposition \cite{HaSp}(which it self was derived from SVD of $A$). For this , we assume all matrices are square from now onwards and for the next section of this paper. To give this canonical form  we need the following two results.

\begin{theorem}\cite[Hartwig-Spindelb\"ock decomposition]{HaSp} Let $A\in \Cnn$ of rank $r>0$. Then there exists a unitary matrix $U\in \Cnn$ such that
\begin{equation} \label{HS}
 A = U\left[\begin{array}{cc}
\Sigma K & \Sigma L \\ 0 & 0 \end{array}\right]U^*, \end{equation} where $\Sigma=\text{diag} (\sigma_1 I_{r_1},\sigma_2 I_{r_2},\dots, \sigma_t I_{r_t})$ is the
diagonal matrix of singular values of $A$, $\sigma_1>\sigma_2>\cdots>\sigma_t>0$, $r_1+r_2+\cdots+r_t=r$, and $K\in \C^{r\times r}$, $L\in \C^{r\times(n-r)}$
satisfy $KK^*+LL^*=I_r$. \end{theorem}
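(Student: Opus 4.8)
The plan is to obtain the Hartwig-Spindelb\"ock decomposition directly from the singular value decomposition of Theorem~\ref{svd}, by taking the unitary factor $U$ to be the matrix of left singular vectors and absorbing the discrepancy between the left and right singular vector bases into a single unitary matrix whose first block row furnishes $K$ and $L$.

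First I would record an SVD of $A$ with the singular values listed in decreasing order and equal ones grouped together, so that $A=V\begin{bmatrix}\Sigma&0\\0&0\end{bmatrix}W^{*}$ with $V,W\in\Cnn$ unitary, $\Sigma=\diag(\sigma_{1}I_{r_{1}},\dots,\sigma_{t}I_{r_{t}})$, $\sigma_{1}>\cdots>\sigma_{t}>0$, and $r_{1}+\cdots+r_{t}=r$. This is Theorem~\ref{svd} after a permutation of coordinates, so nothing new is needed at this step.

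Next, set $U:=V$. Then $A=U\begin{bmatrix}\Sigma&0\\0&0\end{bmatrix}W^{*}=U\begin{bmatrix}\Sigma&0\\0&0\end{bmatrix}(W^{*}U)\,U^{*}$, so the whole problem reduces to analyzing the unitary matrix $W^{*}U=W^{*}V$. Partitioning it conformably with the block sizes $r$ and $n-r$ as $W^{*}U=\begin{bmatrix}K&L\\M&N\end{bmatrix}$ with $K\in\C^{r\times r}$ and $L\in\C^{r\times(n-r)}$, a single block multiplication gives
\[
\begin{bmatrix}\Sigma&0\\0&0\end{bmatrix}\begin{bmatrix}K&L\\M&N\end{bmatrix}=\begin{bmatrix}\Sigma K&\Sigma L\\0&0\end{bmatrix},
\]
hence $A=U\begin{bmatrix}\Sigma K&\Sigma L\\0&0\end{bmatrix}U^{*}$, which is the claimed form \eqref{HS}. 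Finally, because $W^{*}U$ is unitary its first block row $\begin{bmatrix}K&L\end{bmatrix}$ has orthonormal rows, i.e.\ $KK^{*}+LL^{*}=I_{r}$; if $r=n$ the block $L$ is vacuous and this merely says $K$ is unitary.

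I do not expect a genuine obstacle here. The only points deserving a moment's attention are that the bottom block rows of $\begin{bmatrix}\Sigma&0\\0&0\end{bmatrix}(W^{*}U)$ vanish no matter what $M$ and $N$ are, and the observation that the lone algebraic constraint $KK^{*}+LL^{*}=I_{r}$ is precisely the orthonormality of one block row of the unitary matrix $W^{*}U$; every other ingredient is immediate from the SVD.
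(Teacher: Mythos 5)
Your derivation is correct and is essentially the standard argument: the paper cites this decomposition from Hartwig--Spindelb\"ock without proof, but explicitly remarks that it ``was derived by using SVD,'' which is exactly what you do by writing $W^{*}=(W^{*}U)U^{*}$ and reading off $K$ and $L$ from the top block row of the unitary matrix $W^{*}U$, with $KK^{*}+LL^{*}=I_{r}$ coming from the orthonormality of that block row. The only cosmetic point is that no permutation of coordinates is needed, since Theorem~\ref{svd} already lists the singular values in decreasing order and grouping equal ones into $\sigma_{i}I_{r_{i}}$ is just a rewriting of the same diagonal matrix.
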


\begin{theorem} \label{HS representation} \cite{BaTr} Let $A\in \Cnn$ be a matrix  written as in (\ref{HS}). Then
\[A^\dag = U\left[\begin{array}{cc} K^* \Sigma^{-1} & 0 \\ L^* \Sigma^{-1} & 0
\end{array}\right]U^*, \quad AA^\dag = U\left[\begin{array}{cc} I_r & 0\\ 0 & 0
    \end{array}\right]U^*.\]
\end{theorem}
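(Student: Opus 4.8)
The plan is to verify directly that the displayed matrix satisfies the four Penrose conditions, using only the block structure of \eqref{HS} and the single identity $KK^*+LL^*=I_r$. Write
\[
X := U\left[\begin{array}{cc} K^*\Sigma^{-1} & 0 \\ L^*\Sigma^{-1} & 0 \end{array}\right]U^*,
\]
so that, $U$ being unitary, every product below collapses to a multiplication of the inner $2\times 2$ block matrices. The one computation that actually uses the hypothesis is
\[
AX = U\left[\begin{array}{cc} \Sigma K & \Sigma L \\ 0 & 0 \end{array}\right]\left[\begin{array}{cc} K^*\Sigma^{-1} & 0 \\ L^*\Sigma^{-1} & 0 \end{array}\right]U^* = U\left[\begin{array}{cc} \Sigma(KK^*+LL^*)\Sigma^{-1} & 0 \\ 0 & 0 \end{array}\right]U^* = U\left[\begin{array}{cc} I_r & 0 \\ 0 & 0 \end{array}\right]U^*,
\]
where the last equality is $KK^*+LL^*=I_r$. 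In particular $AX$ is an orthogonal projector, hence Hermitian, which disposes of the condition $(AX)^*=AX$ and, once $X=A^\dag$ is established, simultaneously yields the claimed formula $AA^\dag = U\,\diag(I_r,0)\,U^*$.

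Next I would record
\[
XA = U\left[\begin{array}{cc} K^*K & K^*L \\ L^*K & L^*L \end{array}\right]U^* = U\left[\begin{array}{c} K^* \\ L^* \end{array}\right]\left[\begin{array}{cc} K & L \end{array}\right]U^*,
\]
which is of the form $N^*N$ and hence Hermitian, settling $(XA)^*=XA$. The remaining two equations are then immediate from the shapes of $A$ and $X$: $AXA=(AX)A = U\,\diag(I_r,0)\,U^*\cdot A = A$ because the nonzero rows of $A$ in the block form \eqref{HS} lie in the top block, and $XAX = X(AX) = X\cdot U\,\diag(I_r,0)\,U^* = X$ because the nonzero columns of $X$ lie in the left block. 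Uniqueness of the Moore--Penrose inverse now gives $X=A^\dag$, and $AA^\dag=AX=U\,\diag(I_r,0)\,U^*$.

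I do not expect a genuine obstacle: the result is a routine consequence of the Hartwig--Spindelb\"ock decomposition, and the only thing to be careful about is invoking $KK^*+LL^*=I_r$ at the right moment while tracking which block of $A$ (its bottom rows) and of $X$ (its right columns) vanishes. An equally short route is to view \eqref{HS} as a full-rank factorization $A=PQ$ with
\[
P=U\left[\begin{array}{c}\Sigma \\ 0\end{array}\right], \qquad Q=\left[\begin{array}{cc} K & L \end{array}\right]U^*;
\]
here $P$ has full column rank $r$, $Q$ has full row rank $r$, $P^*P=\Sigma^2$, and $QQ^*=KK^*+LL^*=I_r$, so the standard formula $A^\dag = Q^*(QQ^*)^{-1}(P^*P)^{-1}P^*$ reproduces the stated expression for $A^\dag$ in one line, while $AA^\dag = P(P^*P)^{-1}P^* = U\,\diag(I_r,0)\,U^*$.
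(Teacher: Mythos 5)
Your verification is correct: all four Penrose identities check out, with $KK^*+LL^*=I_r$ entering exactly where you say (in $AX$), and the full-rank-factorization shortcut via $A^\dag=Q^*(QQ^*)^{-1}(P^*P)^{-1}P^*$ is also valid since $P^*P=\Sigma^2$ and $QQ^*=I_r$. Note, however, that the paper does not prove this statement at all --- it is quoted from Baksalary and Trenkler as a known consequence of the Hartwig--Spindelb\"ock decomposition --- so there is no in-paper argument to compare against; your direct check (or the one-line factorization route) is a perfectly adequate substitute.
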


\begin{theorem}\label{canonical form} Let $A,T \in \Cnn,$  $T$ a partial isometry, and $r=\rank(A)$. Then the following are equivalent.
\begin{enumerate}[{\rm(i)}]
\item  $A$ is $T$-EP;
\item  There exists nonsingular matrix $D$ of order $r$ and a unitary matrix $U$ such that
           \[A= U\begin{bmatrix}
               D & 0 \\
               0 & 0 \\
              \end{bmatrix} U^*T;\]
where $T=U\begin{bmatrix}
         T_1 & T_2 \\
          T_3 & T_4 \\
        \end{bmatrix}U^*$ with $T_1T^*_1+T_2T^*_2=I_r$ and $T_3T^*_1+T_4T^*_2=0$.
\item There exists an EP matrix $C$ such that $A=CT$ and $TT^*C=C$.
\end{enumerate}
In particular, under any one of equivalent conditions (i)-(iii) we have $CTT^*=C$.
\end{theorem}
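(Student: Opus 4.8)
The plan is to run the cycle (i) $\Rightarrow$ (ii) $\Rightarrow$ (iii) $\Rightarrow$ (i), with the implication (i) $\Rightarrow$ (ii) carrying essentially all of the computational weight. For (iii) $\Rightarrow$ (i), together with the final assertion $CTT^*=C$, I would simply invoke Theorem \ref{canonical form rect} and Remark \ref{remark TT*E=E}: condition (iii) of the present theorem is exactly condition (iii) of Theorem \ref{canonical form rect} specialized to $m=n$ (with $C$ playing the role of $E$), so it already yields that $A$ is $T$-EP and that $CTT^*=C$. If a self-contained argument is preferred, (iii) $\Rightarrow$ (i) can be reproved verbatim as there: from $C$ EP and $A=CT$ one gets $\Ra(TA^*)=\Ra(TT^*C^*)=\Ra(C)\supseteq\Ra(CT)=\Ra(A)$, a rank count ($\rk(TA^*)=\rk(AT^*)\le\rk(A)$) upgrades this to $\Ra(A)=\Ra(TA^*)$, and $A=CT$ gives $\Nu(T)\subseteq\Nu(A)$, so Theorem \ref{characterization 1}(iv) applies.

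For (i) $\Rightarrow$ (ii), I would begin from the Hartwig--Spindelb\"ock decomposition $A=U\begin{bmatrix}\Sigma K & \Sigma L\\ 0 & 0\end{bmatrix}U^*$ with $KK^*+LL^*=I_r$, and partition $T$ against the \emph{same} unitary matrix, $T=U\begin{bmatrix}T_1 & T_2\\ T_3 & T_4\end{bmatrix}U^*$. By Theorem \ref{characterization 1}(v), $A$ being $T$-EP means $\Ra(A)=\Ra(TA^*)$ and $A=AT^*T$, and by Theorem \ref{properties 1}(i) we also have $A=TT^*A$. Using $A^\dag$ and $AA^\dag=U\diag(I_r,0)U^*$ from Theorem \ref{HS representation}, the inclusion $\Ra(TA^*)\subseteq\Ra(A)$, i.e.\ $AA^\dag TA^*=TA^*$, forces the lower-left block of $TA^*$ to vanish, which unravels to $T_3K^*+T_4L^*=0$. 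Substituting this into $A=AT^*T$ collapses the two resulting block equations to the single identity $[\,K\ L\,]=(KT_1^*+LT_2^*)[\,T_1\ T_2\,]$; setting $P:=KT_1^*+LT_2^*$, the rank identity $r=\rk(A)=\rk(TA^*)=\rk(P^*\Sigma)$ shows that $P$ is nonsingular. Then $T_3K^*+T_4L^*=0$ reads $P(T_1T_3^*+T_2T_4^*)=0$, hence $T_3T_1^*+T_4T_2^*=0$; and feeding $[\,K\ L\,]=P[\,T_1\ T_2\,]$ (which has full row rank $r$ because $[\,K\ L\,]$ does) into the equation coming from $A=TT^*A$ yields $T_1T_1^*+T_2T_2^*=I_r$. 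Finally, rewriting $A=AT^*T$ with the block data in hand produces $A=U\begin{bmatrix}D & 0\\ 0 & 0\end{bmatrix}U^*T$ with $D:=\Sigma P$, a product of nonsingular matrices and hence nonsingular, which is exactly the required form.

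For (ii) $\Rightarrow$ (iii) I would set $C:=U\begin{bmatrix}D & 0\\ 0 & 0\end{bmatrix}U^*$. This $C$ is clearly EP, since $\Ra(C)$ and $\Ra(C^*)$ both equal the span of the first $r$ columns of $U$, and $A=CT$ is immediate from the form in (ii). Using $T_1T_1^*+T_2T_2^*=I_r$ and $T_3T_1^*+T_4T_2^*=0$ (whose conjugate transpose is $T_1T_3^*+T_2T_4^*=0$), a direct block computation gives $TT^*=U\begin{bmatrix}I_r & 0\\ 0 & T_3T_3^*+T_4T_4^*\end{bmatrix}U^*$, from which both $TT^*C=C$ and $CTT^*=C$ follow at once by block multiplication.

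The step I expect to be the main obstacle is the middle paragraph: keeping the four $2\times 2$ block relations straight and, above all, deducing the nonsingularity of $P$ from a rank count, since it is that nonsingularity which lets one peel off the two conditions $T_3T_1^*+T_4T_2^*=0$ and $T_1T_1^*+T_2T_2^*=I_r$ and identify the nonsingular $D$. The remaining implications are either routine block arithmetic or a direct appeal to the rectangular canonical form already established in Theorem \ref{canonical form rect}.
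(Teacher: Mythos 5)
Your proposal is correct and follows essentially the same route as the paper: Hartwig--Spindelb\"ock decomposition of $A$, partition of $T$ against the same unitary $U$, extraction of the block relations from $A=AT^*T$, $A=TT^*A$ and $AA^\dag TA^*=TA^*$, and the choice $D=\Sigma(KT_1^*+LT_2^*)$, with (ii)$\Rightarrow$(iii)$\Rightarrow$(i) handled as in the rectangular Theorem \ref{canonical form rect}. The only (harmless) variation is that you get the invertibility of $P=KT_1^*+LT_2^*$ from the rank count $\rk(TA^*)=\rk(P)=r$, whereas the paper derives the explicit identity $(KT_1^*+LT_2^*)(T_1K^*+T_2L^*)=I_r$; both are valid.
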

\begin{proof} We show (i) $\Rightarrow $ (ii) $\Rightarrow $ (iii) $\Rightarrow $ (i). \\
(i) $\Rightarrow $ (ii).
Let $A$ be $T$-EP. By Theorem \ref{characterization 1} (v) $\Ra(A)=\Ra(TA^*)$ and $A=AT^*T.$
We write $A$ as in \eqref{HS}. Let $T$ be partitioned in conformation with the partition of $A$ as $T=U\begin{bmatrix}
     T_1 & T_2 \\
     T_3 & T_4 \\
   \end{bmatrix}U^*.$
Then $A=AT^*T$ is equivalent to
\[
\begin{bmatrix}
         \Sigma K & \Sigma L \\
          0 & 0 \\
        \end{bmatrix}
        =\begin{bmatrix}
          \Sigma K & \Sigma L \\
          0 & 0 \\
        \end{bmatrix}
        \begin{bmatrix}
          T^*_1T_1+T_3^*T_3 & T^*_1 T_2 + T_3^*T_4  \\
          T^*_2 T_1+T_4^*T_3 & T_2^* T_2+T^*_4T_4 \\
        \end{bmatrix},
\]
or equivalently
\begin{equation}\label{eq11}
\Sigma K=\Sigma K(T^*_1T_1+T^*_3T_3)+ \Sigma L (T^*_2 T_1+T_4^*T_3),
\end{equation}
\begin{equation}\label{eq12}
\Sigma L=\Sigma K(T^*_1 T_2 + T_3^*T_4)+\Sigma L (T_2^* T_2+T^*_4T_4).
\end{equation}
Also, by Theorem \ref{properties 1} (i) we have that $A=TT^*A$, which in turn  is equivalent to
\begin{equation}\label{eq13}
\Sigma K=(T_1T^*_1+T_2T^*_2)\Sigma K
\end{equation}
\begin{equation}\label{eq14}
\Sigma L=(T_1T^*_1+T_2T^*_2)\Sigma L
\end{equation}
\begin{equation}\label{eq15}
0=(T_3T^*_1+T_4T^*_2)\Sigma K
\end{equation}
\begin{equation}\label{eq16}
0=(T_3T^*_1+T_4T^*_2)\Sigma L
\end{equation}
Multiplying \eqref{eq13} by $K^*$ on right and \eqref{eq14} by $L^*$ on right, adding and by using the identity $KK^*+LL^*=I_r$ we obtain
\begin{equation}\label{eq17}
T_1T^*_1+T_2T^*_2=I_r.
\end{equation}
Similarly, multiplying \eqref{eq15} on right by $K^*$ and \eqref{eq16} by $L^*$ on right and adding we have
\begin{equation}\label{eq18}
T_3T^*_1+T_4T^*_2=0.
\end{equation}
Since $\Ra(A)=\Ra(TA^*)$  implies $AA^\dag TA^*=TA^*$, from Theorem \ref{HS representation} we obtain
\begin{align*}
AA^\dag TA^*=TA^* ~ & \Leftrightarrow   \begin{bmatrix}
  T_1(\Sigma K)^*+ T_2 (\Sigma L)^* & 0 \\
  0 & 0 \end{bmatrix} = \begin{bmatrix}
  T_1(\Sigma K)^*+ T_2 (\Sigma L)^* & 0 \\
  T_3(\Sigma K)^*+ T_4 (\Sigma L)^* & 0
  \end{bmatrix},
\end{align*}

whence
\begin{equation}\label{eq19}
\Sigma K T^*_3+\Sigma L T^*_4=0.
\end{equation}
Note that  \eqref{eq11} can be rewritten as
\begin{eqnarray*}
\Sigma K &=& \Sigma K(T^*_1T_1+T^*_3T_3)+ \Sigma L (T^*_2 T_1+T_4^*T_3) \\
&=& \Sigma K T^*_1T_1+ \Sigma K T^*_3T_3+ \Sigma L T^*_2 T_1+ \Sigma L T_4^*T_3 \\
&=& (\Sigma K T^*_3T_3+ \Sigma L T_4^*) T_3+ (\Sigma K T^*_1T_1+ \Sigma L T^*_2 T_1).
\end{eqnarray*}

Thus, by \eqref{eq19} we have
\begin{equation}\label{eq20}
\Sigma K= \Sigma K T^*_1T_1+ \Sigma L T^*_2 T_1.
\end{equation}
Similarly, from \eqref{eq11} and \eqref{eq19}, we have
\begin{equation}\label{eq21}
\Sigma L=\Sigma KT^*_1T_2+\Sigma LT^*_2T_2.
\end{equation}
Multiplying \eqref{eq20} by $K^*$ on right and \eqref{eq21} by $L^*$ and adding and rearranging, we have
\begin{equation}\label{nonsingular}
I_r=(KT^*_1+LT^*_2)(T_1K^*+T_2L^*),
\end{equation}
because $\Sigma$ is nonsingular and $KK^*+LL^*=I_r$. \\
Now, we define the matrix $D:=\Sigma KT^*_1+ \Sigma LT^*_2$, which is nonsingular by \eqref{nonsingular} and the nonsingularity of $\Sigma$. By using \eqref{eq19}, we have
\begin{equation}\label{AT*}
AT^*=U\begin{bmatrix}
          D & 0 \\
          0 & 0 \\
        \end{bmatrix}U^*.
\end{equation}
Since $A=AT^*T$ from \eqref{eq17}, \eqref{eq18} and \eqref{AT*}, we have
\[A=U\begin{bmatrix}
   D & 0 \\
  0 & 0 \\
\end{bmatrix}U^*T,\]
where $T=U\begin{bmatrix}
         T_1 & T_2 \\
          T_3 & T_4 \\
        \end{bmatrix}U^*$ such that $T_1T^*_1+T_2T^*_2=I_r$ and $T_3T^*_1+T_4T^*_2=0$ hold.  \\
The proofs of (ii) $\Rightarrow $ (iii) and (iii) $\Rightarrow $ (i) are similar to  (ii) $\Rightarrow $ (iii) and (iii) $\Rightarrow $ (i) of Theorem \ref{canonical form rect}.
Finally, we suppose that  any one of equivalent conditions (i)-(iii) holds. Then,
\[CTT^*= U\begin{bmatrix}
             D & 0 \\
             0 & 0
             \end{bmatrix}
           \begin{bmatrix}
              I_r & 0 \\
                0 & Z
           \end{bmatrix}U^*=U\begin{bmatrix}
          D & 0 \\
          0 & 0 \\
        \end{bmatrix}U^*=C.\]
This completes the proof.
\end{proof}
\begin{remark} Note that the matrix $TT^*$ in (ii) $\Rightarrow $ (iii) takes the form $TT^*=U\begin{bmatrix}
 I_r & 0 \\
 0 & Z \\
 \end{bmatrix}U^*,$ where $Z=T_3T^*_3+T_4T^*_4$ is actually a hermitian matrix.
\end{remark}

The following theorem was proved by Pearl \cite {Pearl} and can be deduced from Theorem \ref{canonical form} when $T=I_n$.
\begin{corollary} Let $A\in \Cnn$ and $\rank(A)=r$. Then $A$ is EP if and only if there exists a unitary matrix $U \in \Cnn,$ and  a nonsingular matrix $D$ of order $r$ such that
       \[A= U\begin{bmatrix}
               D & 0 \\
               0 & 0 \\
              \end{bmatrix} U^*.\]
\end{corollary}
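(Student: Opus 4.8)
The plan is to derive the corollary directly from Theorem~\ref{canonical form} by specializing $T=I_n$. First I would note that $I_n$ is the trivial partial isometry, since $I_nI_n^*I_n=I_n$, so Theorem~\ref{canonical form} applies with $T=I_n$. Next, I would recall from the Remark following Definition~\ref{def TEP} that when $T=I_n$ the condition ``$A$ is $T$-EP'' becomes $\Ra(A)=\Ra(I_nA^*I_n)=\Ra(A^*)$ together with the vacuous requirement $A=AI_n^*I_n$; hence $A$ is $I_n$-EP if and only if $A$ is EP. Thus proving the corollary amounts to applying the equivalence (i)$\Leftrightarrow$(ii) of Theorem~\ref{canonical form} in the case $T=I_n$.

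For the ``only if'' direction, assume $A\in\Cnn$ is EP with $\rank(A)=r$, hence $I_n$-EP. Applying the implication (i)$\Rightarrow$(ii) of Theorem~\ref{canonical form}, there exist a nonsingular matrix $D$ of order $r$ and a unitary matrix $U\in\Cnn$ such that $A=U\begin{bmatrix}D&0\\0&0\end{bmatrix}U^*T$ with $T=U\begin{bmatrix}T_1&T_2\\T_3&T_4\end{bmatrix}U^*$. Since here $T=I_n$, we obtain $A=U\begin{bmatrix}D&0\\0&0\end{bmatrix}U^*$, which is the claimed canonical form.

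For the ``if'' direction, suppose $A=U\begin{bmatrix}D&0\\0&0\end{bmatrix}U^*$ with $U\in\Cnn$ unitary and $D$ nonsingular of order $r$. I would check the hypotheses of the implication (ii)$\Rightarrow$(i) of Theorem~\ref{canonical form} with $T=I_n$: writing $I_n=U\begin{bmatrix}I_r&0\\0&I_{n-r}\end{bmatrix}U^*$ identifies the blocks $T_1=I_r$, $T_2=0$, $T_3=0$, $T_4=I_{n-r}$, which trivially satisfy $T_1T_1^*+T_2T_2^*=I_r$ and $T_3T_1^*+T_4T_2^*=0$, and the identity $A=AI_n^*I_n$ is immediate; hence $A$ is $I_n$-EP, that is, EP. Alternatively one can argue directly that $A^\dag=U\begin{bmatrix}D^{-1}&0\\0&0\end{bmatrix}U^*$, so $AA^\dag=U\begin{bmatrix}I_r&0\\0&0\end{bmatrix}U^*=A^\dag A$, which shows $A$ is EP.

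Since the argument is merely a substitution into an already-proved theorem, there is no genuine obstacle. The only point deserving a moment's attention is verifying that $T=I_n$ is compatible with the block structure appearing in Theorem~\ref{canonical form}(ii), i.e.\ that the trivial partition of $I_n$ meets the constraints $T_1T_1^*+T_2T_2^*=I_r$ and $T_3T_1^*+T_4T_2^*=0$; as noted above this is immediate.
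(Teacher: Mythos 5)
Your proposal is correct and matches the paper's intent exactly: the paper states the corollary "can be deduced from Theorem \ref{canonical form} when $T=I_n$" and offers no further detail, and your specialization of the block conditions (plus the direct $AA^\dag=A^\dag A$ check) is precisely that deduction carried out. No issues.
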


\section{More properties of square $T$-EP matrices}

As said earlier, for this section again we  take all matrices as square matrices. We obtain several characterization of $T$-EP matrices when the partial isometry $T$ satisfies some additional conditions. More precisely, when $T$ is either an orthogonal projector or a unitary matrix or a normal matrix.

We have seen in Example \ref{E2} that a EP matrix is always relative EP respect to the identity matrix of same order.  This motivates the following question: When can an EP matrix be a $T$-EP matrix  with respect to a nontrivial partial  isometry $T$?

The following results show the relationship between EP and $T$-EP matrices.

\begin{theorem}\label{AA* TEP} Let $A,T \in \Cnn.$ If $A$ is EP and $AA^*$ is $T$-EP. Then $A$ is $T$-EP.
 \end{theorem}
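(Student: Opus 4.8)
The plan is to exploit the characterization of $T$-EP matrices via the equation in Theorem \ref{characterization 2}, namely that $A$ is $T$-EP if and only if $TA^\dag A = AA^\dag T$ together with $A = AT^*T$. The hypothesis gives us that $A$ is EP (so $AA^\dag = A^\dag A$, and $A^\dag$ commutes with $A$) and that $B := AA^*$ is $T$-EP. First I would unwind what "$AA^*$ is $T$-EP" yields: by Theorem \ref{characterization 2} applied to $B = AA^*$ we get $TB^\dag B = BB^\dag T$ and $B = BT^*T$. Since $A$ is EP, standard facts give $B^\dag = (AA^*)^\dag = (A^\dag)^*A^\dag = (A^*)^\dag A^\dag$, and moreover $BB^\dag = AA^\dag = A^\dag A = B^\dag B$ because $\Ra(AA^*) = \Ra(A)$ and $A$ is EP. So the "$B$ is $T$-EP" conditions collapse to $T(AA^\dag) = (AA^\dag)T$, i.e. $T$ commutes with the orthogonal projector $AA^\dag$, and $AA^* = AA^*T^*T$.

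Next I would derive the two conditions needed for $A$ to be $T$-EP. The commutation $T AA^\dag = AA^\dag T$ is exactly $TA^\dag A = AA^\dag T$ once we use $AA^\dag = A^\dag A$ (EP-ness of $A$); that is the first required identity, for free. For the second, $A = AT^*T$: starting from $AA^* = AA^*T^*T$, I would like to cancel the leading $A^*$ appropriately. Multiply $AA^* = AA^*T^*T$ on the left by $A^\dag$; using $A^\dag A A^* = (A^\dag A)A^* = A^*(A^\dag A)^* $... more cleanly, since $A$ is EP we have $A^\dag AA^* = A^*$ (because $A^\dag A$ is the projector onto $\Ra(A^*) = \Ra(A)$ and $A^* = A^\dag A A^*$ always holds). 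Hence left-multiplying $AA^* = AA^*T^*T$ by $A^\dag$ gives $A^* = A^*T^*T$, and taking conjugate transpose yields $A = T^*T A^*{}^* = \dots$ wait—$(A^*T^*T)^* = T^*T^{**}A = T^*TA$, which is the wrong side. So instead I would take conjugate transpose of $AA^* = AA^*T^*T$ directly to get $AA^* = T^*T AA^*$, then right-multiply by $(A^\dag)^* = (A^*)^\dag$ and use $AA^*(A^*)^\dag = A$ (valid since $A$ is EP) to conclude $A = T^*TA$; finally, since $A$ EP means $A = AT^*T$ is equivalent to $A = T^*TA$ by Lemma \ref{lemma 2} combined with Lemma \ref{lemma 1} applied to... actually I would note directly that $A = T^*TA$ and $A$ EP force $\Ra(A) = \Ra(A^*) \subseteq \Ra(T^*T) = \Ra(T^*)$, which by Lemma \ref{lemma 2} gives $A = AT^*T$.

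With $TA^\dag A = AA^\dag T$ and $A = AT^*T$ both established, Theorem \ref{characterization 2} immediately gives that $A$ is $T$-EP, completing the proof. The main obstacle I anticipate is the bookkeeping in the second step: correctly passing from the "weighted" condition $AA^* = AA^*T^*T$ back to the "unweighted" condition $A = AT^*T$, which requires using EP-ness of $A$ at the right place (the identities $A^\dag AA^* = A^*$ and $AA^*(A^*)^\dag = A$ hold precisely because $\Ra(A) = \Ra(A^*)$) and being careful about which side the factor $T^*T$ ends up on under conjugate transposition. Everything else is a direct translation through the already-proved characterization theorems.
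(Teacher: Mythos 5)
Your proof is correct, but it routes through a different characterization than the paper's own argument. The paper's proof is purely geometric: from ``$AA^*$ is $T$-EP'' it reads off $\Nu(T)\subseteq\Nu(AA^*)=\Nu(A^*)=\Nu(A)$ (EP-ness of $A$ giving the last equality) and the chain $\Ra(A)=\Ra(AA^*)=\Ra(TAA^*T)\subseteq\Ra(TA)=T\Ra(A)=T\Ra(A^*)=\Ra(TA^*)$ together with a rank comparison, and then invokes Theorem \ref{characterization 1}~(iv). You instead push everything through the Moore--Penrose characterization of Theorem \ref{characterization 2}: you note that $(AA^*)^\dag(AA^*)=(AA^*)(AA^*)^\dag=AA^\dag$, so the commutation condition for $AA^*$ collapses to $TAA^\dag=AA^\dag T$, which is exactly $TA^\dag A=AA^\dag T$ once $AA^\dag=A^\dag A$; and you recover $A=AT^*T$ from $AA^*=AA^*T^*T$ by conjugating to $AA^*=T^*TAA^*$, cancelling with $(A^*)^\dag$ to get $A=T^*TA$, and converting this left-sided condition into the right-sided one via $\Ra(A^*)=\Ra(A)\subseteq\Ra(T^*)$ and Lemma \ref{lemma 2}. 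Both arguments are sound; the paper's is shorter because the range/null-space formulation absorbs exactly the ``which side does $T^*T$ land on'' bookkeeping that you correctly flag as the delicate point of the algebraic route, while your version has the mild virtue of being entirely equational. Two harmless inaccuracies in your write-up: the identities $(AA^*)^\dag=(A^\dag)^*A^\dag$ and $AA^*(A^*)^\dag=A$ hold for every matrix, not only for EP ones, so EP-ness is really only needed to turn $AA^\dag$ into $A^\dag A$ and to flip $A=T^*TA$ into $A=AT^*T$.
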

 \begin{proof} Since $AA^*$ is $T$-EP, by definition we have $\Ra(AA^*)=\Ra(T(AA^*)^*T)=\Ra(TAA^*T)$ and $\Nu(T)\subseteq \Nu(AA^*)$. Thus,
\begin{equation}\label{equation 1}
\Nu(T)\subseteq \Nu(AA^*)=\Nu(A^*)=\Nu(A),
\end{equation}
where the last equality is due the fact that $A$ is EP.
Also, note that
\[\Ra(A)=\Ra(AA^*)=\Ra(TAA^*T)\subseteq \Ra(TA)= T\Ra(A)=T\Ra(A^*)=\Ra(TA^*),\] and $\rk(TA^*)\le \rk(A^*)=\rk(A)$.
 Therefore,
 $\Ra(A)=\Ra(TA^*)$. In consequence, from \eqref{equation 1} and  Theorem \ref{characterization 1} (iv) we have  $A$ is $T$-EP.
\end{proof}

\begin{remark}{\rm Notice that the condition $AA^*$ is $T$-EP in above theorem can be replaced by $AA^\dag$ is $T$-EP. In fact, $\Nu(AA^*)=\Nu(AA^\dag)=\Nu(A^\dag)=\Nu(A^*)$.}
\end{remark}

Since every  unitary matrix is also a partial isometry we have the following:

\begin{theorem}\label{T unitary} Let $A,T \in \Cnn$. Suppose $T$ is an unitary matrix. Then the following are equivalent:
\begin{enumerate}[\rm (i)]
\item $A$ is $T$-EP;
\item $\Ra(A)=\Ra(TA^*)$;
\item $\Nu(A^*)=\Nu(AT^*)$;
\item $AT^*$ is EP;
\item $TA^*$ is EP;
\item $TA^\dag$ is EP;
\item $TA^\dag A=AA^\dag T$.
\end{enumerate}
In particular, under any one of equivalent conditions (i)-(vii) we have
\begin{equation}\label{T unitary 2}
(a)~ (AT)^\dag=T^* A^\dag, \quad (b)~ (TA)^\dag=A^\dag T^*,\quad\text{and}\quad (c)~(TAT)^\dag=T^*A^\dag T^*.
\end{equation}
\end{theorem}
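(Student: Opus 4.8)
The plan is to establish Theorem~\ref{T unitary} by first observing that when $T$ is unitary, the condition $A=AT^*T$ is automatic since $T^*T=I_n$; hence the defining condition ``$A$ is $T$-EP'' collapses to just $\Ra(A)=\Ra(TA^*)$ by Theorem~\ref{characterization 1}~(v), which immediately gives (i)$\Leftrightarrow$(ii). The equivalence (ii)$\Leftrightarrow$(iii) then follows from \eqref{eqA} applied to the identity $\Ra(A)=\Nu(A^*)^\perp$ together with $\Ra(TA^*)=\Nu((TA^*)^*)^\perp=\Nu(AT^*)^\perp$. The remaining equivalences (ii)--(vii) should be routed through the already-proved characterizations: since $T^*T=I_n$ makes the second condition in each of (ii)--(iv) of Theorem~\ref{characterization 1 bis} vacuous, we get directly that $A$ is $T$-EP iff $AT^*$ is EP iff $TA^*$ is EP iff $TA^\dag$ is EP, which covers (iv), (v), (vi). Finally, (vii) follows from Theorem~\ref{characterization 2}, whose condition $A=AT^*T$ is again automatic, leaving only $TA^\dag A=AA^\dag T$.

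For the ``in particular'' formulas, I would verify each of (a), (b), (c) by the direct Moore-Penrose verification, exploiting that $T^*=T^{-1}$ and $T^\dag=T^*$. For (a): set $X:=T^*A^\dag$ and check the four Penrose conditions for $AT$; for instance $(AT)X(AT)=AT T^*A^\dag AT=AA^\dag AT=AT$, and $XAТX=T^*A^\dag AТ T^*A^\dag=T^*A^\dag AA^\dag=T^*A^\dag=X$, while $(AT)X=AA^\dag$ and $X(AT)=T^*A^\dag AT$ are Hermitian (the latter because $A^\dag A$ is Hermitian and $T^*(A^\dag A)T=T^{-1}(A^\dag A)T$ is a unitary conjugate of a Hermitian matrix, hence Hermitian). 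Actually, cleaner: since $A$ is $T$-EP, $A^\dag A=AA^\dag$ need not hold for $A$ itself, but by (vii) we have $TA^\dag A=AA^\dag T$, i.e. $A^\dag A=T^*AA^\dag T$ (and also $AT^* $ is EP), and these identities are exactly what make $X(AT)=T^*A^\dag AT$ Hermitian. The formula (b) is obtained symmetrically with $X:=A^\dag T^*$ for $TA$, and (c) follows by composing: $(TAT)^\dag=(T(AT))^\dag$, applying (b) with $AT$ in place of $A$ after noting $AT$ is such that $T(AT)$ is handled, or more simply by a direct four-condition check with $X:=T^*A^\dag T^*$.

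The main obstacle I anticipate is not any single hard step but rather the bookkeeping needed to justify the Hermitian-symmetry Penrose conditions in the ``in particular'' part without circular reference: one must be careful to use condition (vii) (equivalently Theorem~\ref{characterization 2}) to know that $TA^\dag A=AA^\dag T$, and then deduce that $T^*A^\dag AT$ is Hermitian. Concretely, from $TA^\dag A=AA^\dag T$ we get $A^\dag A = T^*AA^\dag T$ by left-multiplying by $T^*=T^{-1}$; then $T^*A^\dag AT = T^*(T^*AA^\dag T)T=(T^*)^2 AA^\dag T^2$, which is Hermitian because $AA^\dag$ is Hermitian and $(T^*)^2=(T^2)^{-1}=(T^2)^*$ is the inverse (= conjugate transpose) of the unitary $T^2$; alternatively one sees directly that $(T^*A^\dag AT)^*=T^*(A^\dag A)^*T=T^*A^\dag AT$. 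Once this symmetry point is handled, the verifications of (a), (b), (c) are mechanical applications of $AA^\dag A=A$, $A^\dag AA^\dag=A^\dag$, $T^*T=TT^*=I_n$, and the uniqueness of the Moore-Penrose inverse.
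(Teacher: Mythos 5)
Your proposal is correct and follows essentially the same route as the paper: since $T$ is unitary the condition $A=AT^*T$ is automatic, so the equivalences reduce directly to Theorems \ref{characterization 1}, \ref{characterization 1 bis} and \ref{characterization 2}, and the formulas in \eqref{T unitary 2} are verified by a direct Moore--Penrose check. The only superfluous detour is your appeal to condition (vii) to justify that $T^*A^\dag AT$ is Hermitian --- as you yourself note at the end, this is immediate from $(T^*A^\dag AT)^*=T^*(A^\dag A)^*T=T^*A^\dag AT$, so formulas (a)--(c) actually hold for arbitrary $A$ and unitary $T$.
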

\begin{proof} As $T$ is unitary, the first three equivalences follow from Theorem \ref{characterization 1}. Similarly,
equivalences (i)$\Leftrightarrow$(vi)$\Leftrightarrow$(vii)$\Leftrightarrow$(viii) follow from Theorem \ref{characterization 1 bis}. Also, by Theorem \ref{characterization 2} it is clear (i)$\Leftrightarrow$(vii) holds.
Finally, the expressions in \eqref{T unitary 2} can be easily proved by mean a direct verification of the definition of Moore-Penrose inverse and by applying again the fact that $TT^*=T^*T=I_n$.
\end{proof}

Recall that an involutory matrix is a nonsingular matrix that is its own inverse. Next, we present interesting characterizations of $T$-EP matrices when $T$ is an  involutory hermitian matrix, that is, $T^{-1}=T=T^*$.
\begin{theorem}\label{T hermitian involutory} Let $A,T \in \Cnn$. Suppose $T$ is an involutory hermitian matrix. Then the following are equivalent:
\begin{enumerate}[\rm (i)]
\item $A$ is $T$-EP;
\item $\Ra(A)=\Ra(TA^*)$;
\item $\Nu(A^*)=\Nu(AT)$;
\item $AT$ is EP;
\item $TA^*$ is EP;
\item $TA^\dag$ is EP;
\item $TA^\dag A=AA^\dag T$;
\item $\Ra(A^*)=\Ra(TA)$;
\item $\Nu(A)=\Nu(A^* T)$;
\item $A^*T$ is EP;
\item $TA$ is EP;
\item $A^\dag T$ is EP;
\item $A^\dag A T= TAA^\dag$.
\end{enumerate}
\end{theorem}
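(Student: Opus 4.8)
The plan is to exploit the special structure $T=T^*=T^{-1}$ to reduce everything to Theorem \ref{T unitary}, since an involutory hermitian matrix is in particular unitary. First I would observe that statements (i)--(vii) are exactly the equivalences already established in Theorem \ref{T unitary} (with $T^*$ replaced by $T$, which is legitimate because $T^*=T$), so nothing new needs to be proved for that block except to note that $AT^*=AT$ and $TA^*$, $TA^\dag$ are unchanged. The real content is to show that the second block (viii)--(xiii) is equivalent to the first. Here the key idea is a symmetry/duality: applying Theorem \ref{characterization 4} tells us that $A$ is $T$-EP if and only if $A^*$ is $T^*$-EP, and since $T^*=T$ this says $A$ is $T$-EP $\iff$ $A^*$ is $T$-EP. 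Then one simply runs Theorem \ref{T unitary} a second time with $A$ replaced by $A^*$: its conditions (ii)--(vii) applied to $A^*$ read $\Ra(A^*)=\Ra(TA)$, $\Nu(A)=\Nu(A^*T^*)=\Nu(A^*T)$, $A^*T^*=A^*T$ is EP, $TA$ is EP, $T(A^*)^\dag=T(A^\dag)^*$ is EP, and $T(A^*)^\dag A^*=A^*(A^*)^\dag T$, which upon taking adjoints becomes $A^\dag A T = TAA^\dag$. These are precisely (viii)--(xiii).

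Concretely, I would organize the write-up as follows. Step 1: note $T$ unitary, so Theorem \ref{T unitary} gives (i)$\Leftrightarrow$(ii)$\Leftrightarrow\cdots\Leftrightarrow$(vii), after replacing $T^*$ by $T$ throughout. Step 2: invoke Theorem \ref{characterization 4}(i)$\Leftrightarrow$(ii) together with $T^*=T$ to get that $A$ is $T$-EP iff $A^*$ is $T$-EP. Step 3: apply Theorem \ref{T unitary} to the matrix $A^*$ (still with the unitary $T$); its seven conditions for $A^*$ translate, using $(A^*)^\dag=(A^\dag)^*$ and $(A^*)^*=A$, into exactly statements (i), (viii), (ix), (x), (xi), (xii), (xiii) of the present theorem — the first of these being ``$A^*$ is $T$-EP'', which by Step 2 is equivalent to ``$A$ is $T$-EP''. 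For the adjoint manipulations I would spell out one representative case, e.g. condition (vii) of Theorem \ref{T unitary} applied to $A^*$ is $T(A^*)^\dag A^* = A^*(A^*)^\dag T$; taking conjugate transpose of both sides and using $T^*=T$, $(A^*)^\dag=(A^\dag)^*$ gives $A A^\dag T = T A^\dag A$... wait — more carefully, $((A^*)^\dag A^*)^* = (A^*)^\dag{}^* (A^*)^* = (A^\dag)^{**}\, A$; I would just present it as: taking $*$ and using $(A^*)^\dag = (A^\dag)^*$ yields $A^\dag A T = T A A^\dag$, which is (xiii). The others (``$B$ EP $\iff B^*$ EP'', ``$B$ EP $\iff B^\dag$ EP'') are already quoted in the proof of Theorem \ref{characterization 1 bis}.

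The main obstacle — really the only place where care is needed — is bookkeeping the adjoints and Moore--Penrose inverses when transporting Theorem \ref{T unitary} from $A$ to $A^*$: one must check that ``$TA^\dag$ is EP'' for $A^*$ becomes ``$T(A^\dag)^*$ is EP'', and then use that a matrix is EP iff its conjugate transpose is EP to identify this with ``$(TA^\dag)^*$-related'' conditions, matching (xii) ``$A^\dag T$ is EP'' via $(A^\dag T)^* = T A^{\dag *}$ wait, $= T^* (A^\dag)^* = T(A^\dag)^*$; good, so ``$T(A^*)^\dag$ EP'' $\iff$ ``$(T(A^\dag)^*)^*$ EP'' $=$ ``$A^\dag T$ EP'', giving (xii). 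Similarly ``$TA$ EP'' $\iff$ ``$A^*T$ EP'' gives the pair (x),(xi), and ``$\Nu((A^*)^*) = \Nu((A^*)T^*)$'' is ``$\Nu(A) = \Nu(A^*T)$'', which is (ix). Once these identifications are laid out in a short table, the proof is essentially complete; I would present it as: ``All equivalences now follow by applying Theorem \ref{T unitary} to $A$ and to $A^*$, using $T^*=T$, the identities $(A^*)^\dag=(A^\dag)^*$ and $A^{**}=A$, the fact that $B$ is EP iff $B^*$ is EP iff $B^\dag$ is EP, and Theorem \ref{characterization 4}.''
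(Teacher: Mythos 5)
Your proposal is correct and follows essentially the same route as the paper: the paper likewise disposes of (i)--(xii) by citing Theorems \ref{characterization 4} and \ref{T unitary}, and only item (xiii) is handled differently (the paper derives it from (iv) via $(AT)^\dag=TA^\dag$, whereas you obtain it as condition (vii) of Theorem \ref{T unitary} applied to $A^*$ --- both work, since with $T^*=T$ conditions (vii) and (xiii) are simply conjugate transposes of one another). Your hesitation over the adjoint bookkeeping is harmless: $T(A^*)^\dag A^*=A^*(A^*)^\dag T$ simplifies directly to $TAA^\dag=A^\dag AT$, i.e.\ (xiii), using $(A^*)^\dag A^*=(AA^\dag)^*=AA^\dag$ and $A^*(A^*)^\dag=(A^\dag A)^*=A^\dag A$.
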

\begin{proof} As $T$ satisfies $T^{-1}=T=T^*$, equivalences (i) to (xii) follow from Theorems \ref{characterization 4} and \ref{T unitary}.\\
(iv)$\Leftrightarrow$(xiii). It is clear that $(AT)^\dag=TA^\dag$ because $T^{-1}=T=T^*$. Moreover, $T^2=I_n$. Thus, $AT$ is EP if and only if
\begin{eqnarray*}
AT(AT)^\dag &=& (AT)^\dag AT \\
AT TA^\dag &=& TA^\dag AT \\
AA^\dag &=& TA^\dag AT \\
T AA^\dag &=& A^\dag AT.
\end{eqnarray*}
\end{proof}

\begin{theorem}\label{T orthogonal projector} Let $A,T \in \Cnn.$ Suppose $T$ is an orthogonal projector.  Then the following are equivalent:
\begin{enumerate}[\rm (i)]
\item $A$ is $T$-EP;
\item $A$ is EP and $A=AT$;
\item $A^*$ is EP and $A=AT$;
\item $A^\dag$ is EP and $A=AT$.
\end{enumerate}
In particular, under any one of equivalent conditions (i)-(iv) we have that $AT$, $TA$, and also $TAT$ are all EP and $T$-EP.
\end{theorem}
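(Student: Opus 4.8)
The statement to prove is Theorem~\ref{T orthogonal projector}: for $T$ an orthogonal projector, $A$ is $T$-EP iff $A$ is EP and $A=AT$ (and the analogous statements with $A^*$, $A^\dag$), with the in-particular claim that $AT$, $TA$, $TAT$ are then all EP and $T$-EP. The plan is to exploit that an orthogonal projector $T$ satisfies $T=T^*=T^2$, hence $TT^*=T^*T=T$, so the two separate conditions $A=TT^*A$ and $A=AT^*T$ appearing throughout Section~2 collapse to $A=TA$ and $A=AT$ respectively.

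\emph{Equivalence (i)$\Leftrightarrow$(ii).} For the forward direction, assume $A$ is $T$-EP. By Definition~\ref{def TEP}, $A=AT^*T=AT$; and by Theorem~\ref{properties 1}(i), $A=TT^*A=TA$, so $A$ commutes with $T$ in the strong sense $A=TA=AT$. Now invoke Theorem~\ref{characterization 1 bis}(ii): $A$ is $T$-EP implies $AT^*=AT$ is EP. But $AT=A$, so $A$ itself is EP. Conversely, assume $A$ is EP and $A=AT$. Take $E:=A$, which is EP; then $A=ET$ is immediate, and $TT^*E=TE=TA$. We need $TA=A$: since $A=AT$, taking conjugate transposes gives $A^*=TA^*$, i.e.\ $\Ra(A^*)\subseteq\Ra(T)$; as $A$ is EP, $\Ra(A)=\Ra(A^*)\subseteq\Ra(T)$, so by Lemma~\ref{lemma 1} $A=TT^*A=TA$. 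Hence $TT^*E=E$, and Theorem~\ref{canonical form} (iii)$\Rightarrow$(i) yields that $A$ is $T$-EP.

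\emph{Equivalences (ii)$\Leftrightarrow$(iii)$\Leftrightarrow$(iv).} These follow from the standard facts, already used in Theorem~\ref{characterization 1 bis}, that a square complex matrix $B$ is EP iff $B^*$ is EP iff $B^\dag$ is EP; the condition $A=AT$ is carried along unchanged in (ii), (iii), (iv).

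\emph{The in-particular claim.} Assuming the equivalent conditions, we showed $A=TA=AT$, so $AT=A$, $TA=A$, and $TAT=TA=A$; since $A$ is EP and $A$ is $T$-EP, all three matrices equal $A$ and are therefore EP and $T$-EP. I expect the only mild obstacle to be the converse of (i)$\Leftrightarrow$(ii): one must remember to derive $TA=A$ from $AT=A$ together with EP-ness (via the range inclusion and Lemma~\ref{lemma 1}), rather than assuming it; everything else is a direct substitution using $T=T^*=T^2$.
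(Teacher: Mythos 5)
Your proposal is correct and follows essentially the same route as the paper: everything reduces, via $T=T^*=T^2$, to Theorem~\ref{characterization 1 bis} together with the standard fact that $B$ is EP iff $B^*$ is EP iff $B^\dag$ is EP. Your converse (ii)$\Rightarrow$(i) detours through Theorem~\ref{canonical form} (which forces you to derive $TA=A$ separately); applying Theorem~\ref{characterization 1 bis}(ii)$\Rightarrow$(i) directly with $AT^*=AT=A$ would make that step unnecessary, but your argument is valid as written.
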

\begin{proof}  Note that if $T$ is an orthogonal projector,  $A=AT^*T$ if and only if $A=AT$. Now, all equivalences follow as a direct application of Theorem \ref{characterization 1 bis}. \\
In order to prove the last affirmation we suppose (ii) holds. Clearly, $A=AT$. On the other hand, by Theorem \ref{properties 1} (i) we know that $A=TT^*A$ or equivalently $A=TA$, as $T$ is an orthogonal projector. Consequently, $A=AT=TA$ and so $A=TAT$. Therefore, part (ii) implies $A$ is $T$-EP and therefore $AT$, $TA$, and also $TAT$ are all EP and $T$-EP.
\end{proof}

\begin{lemma}\label{lemma T normal} Let $A,T \in \Cnn.$ Let $T$ be a normal partial isometry. Then the following hold:
\begin{enumerate}[{\rm(i)}]
\item If $A=TT^*A$ then $(TA)^\dag=A^\dag T^*$. In particular, $A^\dag= (TA)^\dag T$.
\item If $A=AT^*T$ then $(AT)^\dag= T^* A^\dag$. In particular, $A^\dag= T (AT)^\dag$.
\end{enumerate}
\end{lemma}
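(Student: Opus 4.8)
The plan is to verify the Penrose conditions directly for the proposed inverses, exploiting the normality of $T$ to commute $T$ and $T^*$ wherever needed. For part (i), assume $A = TT^*A$ and set $X := A^\dagger T^*$. Since $T$ is a normal partial isometry, $T^*T = TT^*$ and $T^*TT^* = T^*$ (the latter because $T = TT^*T$ implies, after taking adjoints, $T^* = T^*TT^*$). The hypothesis $A = TT^*A$ also gives $A^\dagger = A^\dagger TT^*$ by Lemma \ref{lemma 1}. I would then compute $TAXTA = TAA^\dagger T^*TA$, push $T^*T$ past to combine with $A = TT^*A$ using normality so that $T^*TA = TT^*TT^*A = TT^*A = A$ (here normality lets me rewrite $T^*T$ as $TT^*$ and then $TT^*TT^* = TT^*$ since $TT^*$ is an orthogonal projector), giving $TAA^\dagger A \cdot(\text{appropriate }T)= TA$; similarly $XTAX = A^\dagger T^*TAA^\dagger T^* = A^\dagger AA^\dagger T^* = A^\dagger T^* = X$ after absorbing $T^*T$ against $A^\dagger = A^\dagger TT^*$. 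For the two hermitian conditions, $TAX = TAA^\dagger T^*$, and using $A = TT^*A$ one sees $TAX = TT^*AA^\dagger TT^*{}^*$-type expression which is visibly hermitian; $XTA = A^\dagger T^*TA = A^\dagger A$ (again via normality and $T^*TA = A$), which is hermitian. Thus $X = (TA)^\dagger$. The "in particular" statement $A^\dagger = (TA)^\dagger T$ follows from $(TA)^\dagger T = A^\dagger T^* T = A^\dagger TT^* = A^\dagger$ by normality and Lemma \ref{lemma 1}.

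Part (ii) is entirely analogous, or can be deduced from part (i) by a transpose/adjoint symmetry: if $A = AT^*T$ then $A^* = T^*TA^* = TT^*A^*$ (using normality), so $A^*$ satisfies the hypothesis of part (i) with the normal partial isometry $T$, giving $(TA^*)^\dagger = (A^*)^\dagger T^* = (A^\dagger)^* T^*$. Taking adjoints of both sides, $\big((TA^*)^\dagger\big)^* = T(A^\dagger)^{**}$-type manipulation yields $(AT)^\dagger = (TA^*)^{\dagger *} $-ish; more cleanly, since $(B^\dagger)^* = (B^*)^\dagger$, we get $((TA^*)^*)^\dagger = (AT)^\dagger$ on the left and $(A^\dagger T^*)^* \cdot$... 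I would instead just redo the direct verification: set $Y := T^*A^\dagger$, use $A^\dagger = T^*TA^\dagger$ from Lemma \ref{lemma 2}, and check the four Penrose equations for $ATY = ATT^*A^\dagger$, mirroring the computation above with the roles of left and right multiplication swapped. The "in particular" clause $A^\dagger = T(AT)^\dagger$ then follows from $T(AT)^\dagger = TT^*A^\dagger = T^*TA^\dagger = A^\dagger$.

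The main obstacle I anticipate is bookkeeping with the two identities $A = TT^*A$ (resp. $A = AT^*T$) and the normality relation $T^*T = TT^*$: one must repeatedly recognize when a factor $T^*T$ appearing in the middle of a product can be rewritten as $TT^*$ and then collapsed against an adjacent $A$, $A^\dagger$, or another $TT^*$ (since $TT^*$ is an idempotent hermitian). The key enabling facts are: $TT^*$ and $T^*T$ are both orthogonal projectors with $TT^* = T^*T$ by normality; $A = TT^*A \Leftrightarrow A^\dagger = A^\dagger TT^*$ (Lemma \ref{lemma 1}); and $A = AT^*T \Leftrightarrow A^\dagger = T^*TA^\dagger$ (Lemma \ref{lemma 2}). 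Once these are in hand, each Penrose identity reduces after two or three rewrites to the standard identities $AA^\dagger A = A$, $A^\dagger AA^\dagger = A^\dagger$, and the hermitian-ness of $AA^\dagger$ and $A^\dagger A$. No step is genuinely deep; the care required is purely in not losing track of which projector is being absorbed where.
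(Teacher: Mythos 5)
Your proposal is correct and follows essentially the same route as the paper: a direct verification of the four Penrose conditions for $X=A^\dag T^*$, using normality to replace $T^*T$ by $TT^*$ and absorb it against $A=TT^*A$ (resp.\ $A^\dag=A^\dag TT^*$), with part (ii) handled symmetrically. The only cosmetic difference is that the hermitian condition for $TAX=TAA^\dag T^*$ needs no appeal to $A=TT^*A$ at all, since $(TAA^\dag T^*)^*=T(AA^\dag)^*T^*=TAA^\dag T^*$ directly.
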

\begin{proof} (i) We check this by direct verification. Let $X:=A^\dag T^*$. Since $A=TT^*A$ and $T$ is normal (i.e., $TT^*=T^*T$) we have,
\[TAXTA=TA A^\dag T^*TA = TA A^\dag TT^*A=TA A^\dag A=TA,\]
\[XTAX=A^\dag T^* TA A^\dag T^*= A^\dag TT^*AA^\dag T^*=A^\dag AA^\dag T^*=A^\dag T^*=X,\]
\[(TAX)^*=(TAA^\dag T^*)^*=T(AA^\dag)^* T^*=TAA^\dag T^*=TAX,\]
\[(XTA)^*=(A^\dag T^*TA)^*=(A^\dag TT^*A)^*= (A^\dag A)^*=A^\dag A=A^\dag TT^*A=A^\dag T^* TA= XTA.\]
By uniqueness of the Moore-Penrose inverse we have $X=(TA)^\dag$. \\
It remains to show the last affirmation of part (i). In fact, from Lemma \ref{lemma 2} (i) and the fact that $T$ is normal we obtain $A^\dag=A^\dag T T^*=A^\dag T^* T=(TA)^\dag T$.  \\
(ii) Can be proved similarly.
\end{proof}

\begin{theorem} Let $A,T \in \Cnn.$ Let $T$ be a normal partial isometry. If $A$ is $T$-EP then the Moore-Penrose inverse of $A$ is given by \[A^\dag=(TA)^\dag T=T (AT)^\dag.\]
\end{theorem}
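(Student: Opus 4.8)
The plan is to combine the two facts already available in the excerpt: the characterization that a $T$-EP matrix can be written as $A=CT$ with $C$ an EP matrix satisfying $C=TT^*C=CTT^*$ (Theorem \ref{canonical form} together with its final assertion), and Lemma \ref{lemma T normal}, which gives formulas for $(TA)^\dag$ and $(AT)^\dag$ whenever $T$ is a normal partial isometry and $A$ satisfies, respectively, $A=TT^*A$ or $A=AT^*T$. Since $A$ is $T$-EP, both of these one-sided conditions hold: $A=AT^*T$ holds by Definition \ref{def TEP}, and $A=TT^*A$ holds by Theorem \ref{properties 1}(i). Hence Lemma \ref{lemma T normal}(i) gives $A^\dag=(TA)^\dag T$ and Lemma \ref{lemma T normal}(ii) gives $A^\dag=T(AT)^\dag$ directly, and there is essentially nothing left to do.

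Concretely, I would write: Since $A$ is $T$-EP, by Definition \ref{def TEP} we have $A=AT^*T$, and by Theorem \ref{properties 1}(i) we have $A=TT^*A$. Because $T$ is a normal partial isometry, Lemma \ref{lemma T normal}(i) applies to the relation $A=TT^*A$ and yields $A^\dag=(TA)^\dag T$; similarly Lemma \ref{lemma T normal}(ii) applies to $A=AT^*T$ and yields $A^\dag=T(AT)^\dag$. Combining the two displayed identities gives $A^\dag=(TA)^\dag T=T(AT)^\dag$, which is the claim.

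There is really no obstacle here — the theorem is a corollary packaging of Lemma \ref{lemma T normal} specialized to the $T$-EP case, and the only thing one must check is that the hypotheses of both parts of that lemma are met, which is exactly what $T$-EP plus normality of $T$ delivers. If one wanted a slightly more self-contained exposition, one could instead use the canonical form $A=CT$, $C=TT^*C=CTT^*$ from Theorem \ref{canonical form}, observe that normality of $T$ makes $TC$ and $CT$ amenable to the Moore-Penrose computations, and verify the four Penrose conditions for $A^\dag=(TA)^\dag T$ by direct substitution; but this would merely reprove Lemma \ref{lemma T normal} and is unnecessary. The clean two-line argument via Lemma \ref{lemma T normal} is the one I would present.
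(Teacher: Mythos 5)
Your proof is correct and follows exactly the same route as the paper: invoke $A=AT^*T$ from Definition \ref{def TEP} and $A=TT^*A$ from Theorem \ref{properties 1}(i), then apply the two parts of Lemma \ref{lemma T normal}. Nothing further is needed.
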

\begin{proof}
As $A$ is $T$-EP we know that $A=AT^*T$ by definition. Also, from Theorem \ref{properties 1} (i) we have $A=TT^*A$. Now, the expression of $A^\dag$ follows from Lemma  \ref{lemma T normal}.
\end{proof}

\begin{theorem}\label{square case 8} Let $A,T \in \Cnn.$ Let $T$ be an hermitian partial isometry.  Then $A$ is $T$-EP if and only if $TA$ is EP and $A=AT^2=T^2A$.
 \end{theorem}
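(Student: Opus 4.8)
The plan is to exploit the fact that an hermitian partial isometry $T$ is, in particular, a normal partial isometry satisfying $T=T^*$, so that $T^*T=TT^*=T^2$ and $T^2$ is an orthogonal projector (since $(T^2)^2=TT^*TT^*T T^* = \ldots$ reduces to $T^2$ using $T=TT^*T$, and $(T^2)^*=T^2$). This lets me import Lemma \ref{lemma T normal} and Theorem \ref{characterization 1 bis} wholesale. First I would prove the forward implication: assume $A$ is $T$-EP. By Definition \ref{def TEP}, $A=AT^*T=AT^2$, and by Theorem \ref{properties 1}(i), $A=TT^*A=T^2A$; thus $A=AT^2=T^2A$ is immediate. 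For the EP-ness of $TA$, I would apply Theorem \ref{characterization 1 bis}(iii): $A$ being $T$-EP gives that $TA^*$ is EP; since $T=T^*$ this is $T^*A^*=(AT)^*$, and a square matrix is EP iff its conjugate transpose is EP, so $AT$ is EP. To get $TA$ EP rather than $AT$, note that since $T$ is normal and $A=TT^*A$, Lemma \ref{lemma T normal}(i) gives $(TA)^\dag=A^\dag T^*=A^\dag T$; then one checks $TA(TA)^\dag=TAA^\dag T$ and $(TA)^\dag TA=A^\dag T T A=A^\dag AT^2$ using $A=AT^2$ and the $T$-EP characterization $TA^\dag A=AA^\dag T$ (Theorem \ref{characterization 2}), and these coincide, so $TA$ is EP.

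For the converse, assume $TA$ is EP and $A=AT^2=T^2A$. From $A=AT^2=AT^*T$, the second defining condition of $T$-EP (Lemma \ref{lemma 2}) already holds, so by Theorem \ref{characterization 1 bis}(ii) it suffices to show $AT^*=AT$ is EP. Here I would use $A=T^2A=TT^*A$ together with normality of $T$ and Lemma \ref{lemma T normal}(i) again to write $(TA)^\dag=A^\dag T$; EP-ness of $TA$ then reads $TAA^\dag T=A^\dag T T A=A^\dag AT^2=A^\dag A$ (using $T^2A=A$ so $AT^2=A$ as well by taking adjoints and using $A=T^2A\Rightarrow A^*=A^*T^2\Rightarrow A=T^2A$... more directly $A^\dag A T^2 = A^\dag A$ follows once we know $AT^2=A$, which is our hypothesis). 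Unwinding, $TAA^\dag T=A^\dag A$; multiplying this relation on the left and right by $T$ and using $T^2A=A$, $AT^2=A$, $T^3=T$ yields $AA^\dag=TA^\dag AT$, i.e. $AT\,(AT)^\dag=(AT)^\dag\,AT$ after substituting $(AT)^\dag=TA^\dag$ (valid since $T=T^*$, $T^2A=A$), so $AT$ is EP; then Theorem \ref{characterization 1 bis}(ii) gives that $A$ is $T$-EP.

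The main obstacle I anticipate is bookkeeping the two-sided projector identities $T^2A=A=AT^2$ correctly when translating between "$AT$ is EP" and "$TA$ is EP": these are genuinely different statements for a general partial isometry, and the passage between them rests on normality of $T$ plus the specific form of $(TA)^\dag$ and $(AT)^\dag$ supplied by Lemma \ref{lemma T normal}. I would therefore be careful to invoke Lemma \ref{lemma T normal}(i) only after verifying its hypothesis $A=TT^*A$ (which is Theorem \ref{properties 1}(i) in the forward direction, and follows from $A=T^2A$ and $T=T^*$ in the converse), and similarly for part (ii). Everything else is the routine substitution bookkeeping of Penrose conditions, done exactly as in the proofs of Theorems \ref{characterization 1 bis} and \ref{T hermitian involutory}, to which I would refer rather than repeat.
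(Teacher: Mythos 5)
Your proposal is correct and follows essentially the same route as the paper: both directions hinge on $A=AT^2=T^2A$, the formula $(TA)^\dag=A^\dag T$ from Lemma \ref{lemma T normal}(i), and the commutation identity $TA^\dag A=AA^\dag T$ of Theorem \ref{characterization 2}, with the EP condition for $TA$ verified by direct computation. The only (cosmetic) divergence is in the converse, where you conjugate by $T$ on both sides and pass through ``$AT$ is EP'' via Theorem \ref{characterization 1 bis}(ii), whereas the paper multiplies once on the left by $T$ to recover $AA^\dag T=TA^\dag A$ and cites Theorem \ref{characterization 2} directly; aside from a couple of harmless transcription slips (e.g.\ writing $A^\dag AT^2$ for $A^\dag T^2A$, both of which equal $A^\dag A$ under your hypotheses), the argument is sound.
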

\begin{proof} Let $A$ be $T$-EP. Then by definition of $T$-EP matrix  and Theorem \ref{properties 1} (i), respectively, we have  $A=AT^*T$ and $A=TT^*A$ which are equivalent to $A=AT^2=T^2A$ as $T$ is hermitian. \\
In order to show that $TA$ is EP is equivalent we will use the well-known characterization $TA(TA)^\dag=(TA)^\dag TA$. In fact, first we note that $T$ is a normal partial isometry. Thus, as $A=TT^*A$, Lemma \ref{lemma T normal} (i) implies $(TA)^\dag=A^\dag T^*=A^\dag T$. Moreover, $(TA)^\dag TA = A^\dag A$.  Also, Lemma \ref{lemma 1} (iv) and Theorem \ref{characterization 2} imply $A^\dag=T^*TA^\dag=T^2A^\dag$  and $AA^\dag T=TA^\dag A$, respectively. Therefore,
\begin{eqnarray*}
TA(TA)^\dag &=& TAA^\dag T \\
&=& T T A^\dag A \\
&=& A^\dag A \\
&=& (TA)^\dag TA.
\end{eqnarray*}
Conversely, let $TA$ be EP  and $A=T^2A=AT^2$. Clearly, $A=TT^*A$ and $T$ is a normal partial isometry. Thus, by Lemma \ref{lemma T normal} (i) we have $(TA)^\dag=A^\dag T^*=A^\dag T$.  Therefore, $TA(TA)^\dag=(TA)^\dag TA$ is equivalent to   $TAA^\dag T=A^\dag T^2 A=A^\dag A$. Multiplying on left this equality, by $T$ and using $A=AT^*T=T^2A=AT^2$  implies $AA^\dag T=TA^\dag A$. Thus, Theorem \ref{characterization 2} completes the proof.
\end{proof}

\begin{theorem}\label{square case 3} Let $A,T \in \Cnn,$ and $T$ a partial isometry such that $A$ and $T^*$ commute. Then $A$ is $T$-EP if and only if $A$ is EP and $A=AT^*T.$
\end{theorem}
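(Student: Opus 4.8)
The plan is to prove both implications directly, exploiting the commutativity hypothesis $AT^* = T^*A$ together with the characterizations already established, principally Theorem \ref{characterization 2}. First I would record the immediate consequences of $AT^* = T^*A$: taking conjugate transposes gives $TA^* = A^*T$, and since $T^\dag = T^*$ we also have $A^\dag T^* = T^* A^\dag$ after passing to Moore-Penrose inverses in a suitable product (or, more cleanly, by noting that $A$ and $T^*$ commute forces $A^\dag$ and $T^*$ to commute, a standard fact). These commutation relations will be used repeatedly to slide $T$ and $T^*$ past $A$ and $A^\dag$.

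For the ``only if'' direction, assume $A$ is $T$-EP. By definition $A = AT^*T$, which is half of what we want. It remains to show $A$ is EP, i.e. $AA^\dag = A^\dag A$. By Theorem \ref{characterization 2}, $T$-EP gives $TA^\dag A = AA^\dag T$; by Theorem \ref{properties 1}(i) it also gives $A = TT^*A$. Using $TA^\dag A = AA^\dag T$ and the commutativity $AT^* = T^*A$ (hence $A^\dag T^* = T^* A^\dag$ and $TA^\dag = A^\dag T$), I would pre- and post-multiply appropriately by $T^*$ and use $A = AT^*T = TT^*A$ to cancel the outer partial-isometry factors, arriving at $A^\dag A = AA^\dag$. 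Concretely: from $TA^\dag A = AA^\dag T$ pre-multiply by $T^*$ to get $T^*TA^\dag A = T^*AA^\dag T = AT^*A^\dag T = AA^\dag T^*T$ (moving $T^*$ across $A$ and then across $A^\dag$); then since $A^\dag = T^*TA^\dag$ by Lemma \ref{lemma 2}(iv) and similarly $A^\dag A$ is unchanged on the left by $T^*T$ because $A = AT^*T$, the outer $T^*T$ factors disappear and $A^\dag A = AA^\dag$ follows.

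For the ``if'' direction, assume $A$ is EP and $A = AT^*T$. Then $AA^\dag = A^\dag A$ outright, and $A = AT^*T$ is one of the two defining conditions of $T$-EP. By Theorem \ref{characterization 2} it suffices to establish $TA^\dag A = AA^\dag T$. Using $A$ EP, this is $TAA^\dag = AA^\dag T$, i.e. $AA^\dag$ commutes with $T$. Since $A$ and $T^*$ commute, $A^\dag$ and $T^*$ commute, hence $A$ and $T$ commute and $A^\dag$ and $T$ commute (taking conjugate transposes of the relations involving $T^*$), so $AA^\dag$ commutes with $T$ as a product of two matrices each commuting with $T$. Thus $TA^\dag A = AA^\dag T$, and Theorem \ref{characterization 2} yields that $A$ is $T$-EP.

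The step I expect to be the most delicate is the bookkeeping in the ``only if'' direction: correctly using both $A = AT^*T$ and $A = TT^*A$ (the latter from Theorem \ref{properties 1}(i)) to absorb the spurious $T^*T$ and $TT^*$ factors after the commutation moves, since one must be careful that $T^*T$ and $TT^*$ act as identities only on the appropriate ranges. If the cancellation proves awkward to state cleanly, an alternative is to invoke the canonical form of Theorem \ref{canonical form}: $T$-EP gives $A = CT$ with $C$ EP and $C = CTT^* = TT^*C$, and then commutativity of $A$ with $T^*$ can be transferred to $C$, after which $A = CT$ with $C$ and $T$ related by these identities forces $A$ EP directly. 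I would present whichever of the two routes is shorter, most likely the direct computation.
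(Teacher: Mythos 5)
There is a genuine gap, and it sits at the load-bearing point of both directions of your argument: the claim that $AT^*=T^*A$ forces $A^\dag T^*=T^*A^\dag$ is not a standard fact and is false in general. Commutation with $A$ does not pass to the Moore--Penrose inverse; one needs $T^*$ to commute with $A^*$ as well (or $A$ to be EP, so that $A^\dag$ coincides with the Drazin inverse and is a polynomial in $A$). For instance, $A=\left[\begin{smallmatrix}0&1\\0&0\end{smallmatrix}\right]$ commutes with $T^*=A$ (and $T=A^*$ is a partial isometry), yet $A^\dag A\ne AA^\dag$, so $A^\dag T^*\ne T^*A^\dag$. In the forward direction this is fatal: there ``$A$ is EP'' is precisely the conclusion, so you cannot appeal to any EP-based repair, and without $A^\dag T^*=T^*A^\dag$ the step $AT^*A^\dag T=AA^\dag T^*T$ is unjustified. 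A second error compounds this: conjugate-transposing $AT^*=T^*A$ yields $TA^*=A^*T$, i.e.\ it is $A^*$ (not $A$) that commutes with $T$, so the assertion ``hence $A$ and $T$ commute, and $AA^\dag$ commutes with $T$ as a product of two matrices each commuting with $T$'' does not follow as written. Finally, even granting all the commutations, the terminal cancellation $AA^\dag T^*T=AA^\dag$ requires $\Ra(A)\subseteq\Ra(T^*)$, whereas the hypotheses only give $\Ra(A)\subseteq\Ra(T)$ and $\Ra(A^*)\subseteq\Ra(T^*)$; your own caveat about $T^*T$ and $TT^*$ acting as identities only on the appropriate ranges is exactly where the computation breaks.

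The paper's proof avoids all of this by staying at the level of ranges. Forward: from $TA^*=A^*T$ one gets $TA^*T=A^*T^2$, so $\Ra(A)=\Ra(TA^*T)\subseteq\Ra(A^*)$, and equality of ranks gives $\Ra(A)=\Ra(A^*)$, i.e.\ $A$ is EP (while $A=AT^*T$ is part of the definition). Converse: $\Ra(TA^*)=\Ra(A^*T)\subseteq\Ra(A^*)=\Ra(A)$, and $\rank(A)=\rank(AT^*T)\le\rank(TA^*)$ forces $\Ra(TA^*)=\Ra(A)$, which is condition (v) of Theorem \ref{characterization 1}. If you want to salvage your algebraic route, the converse direction can be repaired (for EP $A$, $A^\dag$ is a polynomial in $A$, hence commutes with $T^*$; then $AA^\dag=(A^\dag)^*A^*$ commutes with $T$ by the conjugated relations), but the forward direction really does need the range argument or an equivalent substitute.
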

\begin{proof} Suppose $A$ and $T^*$ commute and $A$ is $T$-EP. By definition, $\Ra(A)=\Ra(TA^*T).$ Since $A$ and $T^*$ commute, $A^*$ and $T$ commute. Therefore $\Ra(A)= \Ra(A^*T^2)\subseteq \Ra(A^*).$ However, $\rank(A)=\rank(A^*)$ implies $\Ra(A)=\Ra(A^*).$ Thus $A$ is EP.\\
Conversely, let $A$ be $EP$ and $A=AT^*T.$ As $A$ is EP, $\Ra(A)=\Ra(A^*).$ Thus, \[\Ra(TA^*)=\Ra(AT^*)\subseteq\Ra(A^*)=\Ra(A).\] As  $A=AT^*T,$ we obtain
$\rank(A)=\rank(AT^*T)\leq \rank(AT^*)= \rank(TA^*)$. So, $\Ra(TA^*)=\Ra(A).$ Now, by Theorem 2.13 (v),  $A$ is $T$-EP.
\end{proof}

\section{Sums of $T$-EP matrices}
We now study the problem: When is sum of two $T$-EP matrices  a $T$-EP matrix? This problem is not completely resolved even in case of EP matrices and we intend to explore it for $T$-EP matrices in this final section.

\begin{theorem}\label{new result 5} Let $A,B \in \Cm$ be $T$-EP  such that $A^*B+B^*A=0$. Then $A+B$ is $T$-EP.
\end{theorem}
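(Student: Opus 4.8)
The natural approach is to reduce the hypothesis $A^*B+B^*A=0$ to a statement about orthogonality of ranges, exploit the characterization of $T$-EP matrices in terms of equality of ranges from Theorem \ref{characterization 1}, and then verify the two defining conditions of Definition \ref{def TEP} for $A+B$ directly.

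First I would observe that the condition $A=AT^*T$ (equivalently, $\Nu(T)\subseteq\Nu(A)$ by Lemma \ref{lemma 2}) is preserved under addition: if $A=AT^*T$ and $B=BT^*T$, then $(A+B)=(A+B)T^*T$ trivially, so this part of the definition for $A+B$ is immediate. Likewise, by Theorem \ref{properties 1}(i), $A=TT^*A$ and $B=TT^*B$, hence $A+B=TT^*(A+B)$, giving $\Ra(A+B)\subseteq\Ra(T)$. So the whole problem reduces to showing $\Ra(A+B)=\Ra(T(A+B)^*)$, for which, by Theorem \ref{characterization 1}(v), it suffices to check $\Ra(A+B)=\Ra(T(A+B)^*)$; and since the inclusion $\Ra(T(A+B)^*)\subseteq\Ra(A+B)$ is automatic once $\Ra(A+B)\subseteq\Ra(T)$ is known (using $A+B=TT^*(A+B)$), the crux is a rank equality $\rk(A+B)=\rk(T(A+B)^*)$, or equivalently the reverse range inclusion.

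The key algebraic input is that $A^*B+B^*A=0$ forces $\Ra(A)$ and $\Ra(B)$ to intersect trivially, and moreover $\Ra(A^*)\cap\Ra(B^*)=\{0\}$: indeed, writing $A^*B=-(B^*A)^* = -(A^*B)^*$, the matrix $A^*B$ is skew-hermitian-like in a way that yields $\Ra(B)\cap\Nu(A^*)^\perp$ controlled, and from $A^*(A+B)=A^*A+A^*B$ one extracts that the map sending $\Ra((A+B)^*)$ into $\Ra(A^*)\oplus\Ra(B^*)$ is injective. Concretely, I would show $\rk(A+B)=\rk(A)+\rk(B)$ by checking $\Ra(A)\cap\Ra(B)=\{0\}$ (if $Ax=By$ then $B^*Ax=B^*By$, and also $A^*Ax = A^*Ax$; combined with $A^*B=-B^*A$ one pushes the common vector into $\Nu$) together with $\Ra(A^*)\cap\Ra(B^*)=\{0\}$; these give both $\rk(A+B)=\rk(A)+\rk(B)$ and $\rk(T(A+B)^*)=\rk(TA^*)+\rk(TB^*)$ once one knows the ranges $\Ra(TA^*)$ and $\Ra(TB^*)$ are also independent, which follows because $\Ra(TA^*)=\Ra(A)$ and $\Ra(TB^*)=\Ra(B)$ by Theorem \ref{characterization 1}(v) applied to each $T$-EP matrix. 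Then
\[
\rk(T(A+B)^*)=\rk(TA^*+TB^*)\ge \rk\big(\Ra(TA^*)+\Ra(TB^*)\big)-0=\rk(A)+\rk(B)=\rk(A+B),
\]
and combined with the automatic inclusion $\Ra(T(A+B)^*)\subseteq\Ra(A+B)$ this yields equality.

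The main obstacle I anticipate is the careful justification of the range-independence claims from the single scalar-type identity $A^*B+B^*A=0$: one must be sure that $\Ra(A+B)=\Ra(A)\oplus\Ra(B)$ in the sense of dimensions, and that applying $T$ on the left (after transposing) does not collapse dimension, which is where the hypothesis that \emph{both} $A$ and $B$ are $T$-EP is essential — it is precisely what guarantees $\Ra(TA^*)=\Ra(A)$ and $\Ra(TB^*)=\Ra(B)$, so that $T(\cdot)^*$ is rank-preserving on each summand. Once the rank bookkeeping is pinned down, the verification of Definition \ref{def TEP} for $A+B$ is routine.
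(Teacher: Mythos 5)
Your overall framework is sound --- reducing to the pair of conditions ``$(A+B)T^*$ is EP and $A+B=(A+B)T^*T$'' via Theorem \ref{characterization 1 bis} is exactly the right target, and the observation that $A+B=(A+B)T^*T$ is immediate is correct. However, the ``key algebraic input'' on which your whole rank computation rests is false: the hypothesis $A^*B+B^*A=0$ does \emph{not} force $\Ra(A)\cap\Ra(B)=\{0\}$ or $\Ra(A^*)\cap\Ra(B^*)=\{0\}$, and consequently $\rk(A+B)=\rk(A)+\rk(B)$ fails in general. A counterexample: take any nonzero $T$-EP matrix $A$ and set $B=iA$; then $A^*B+B^*A=iA^*A-iA^*A=0$ and $B$ is $T$-EP, yet $\Ra(A)=\Ra(B)$ and $\rk(A+B)=\rk((1+i)A)=\rk(A)\neq\rk(A)+\rk(B)$. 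The condition $A^*B+B^*A=0$ only says that $A^*B$ is skew-hermitian; it is genuinely weaker than $A^*B=0$ (which \emph{would} give $\Ra(B)\subseteq\Nu(A^*)=\Ra(A)^\perp$ and hence range disjointness, as in Corollary \ref{A*B=0}). So the central step of your plan cannot be carried out.

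What the hypothesis actually buys is the cancellation of cross terms in a Gram product, and that is how the paper proceeds: setting $M=AT^*+BT^*$, one has $M^*M=TA^*AT^*+TB^*BT^*$ because $TA^*BT^*+TB^*AT^*=0$, whence
\[
\Nu(M)=\Nu(M^*M)=\Nu\!\left(\begin{bmatrix} AT^*\\ BT^*\end{bmatrix}\right)=\Nu(AT^*)\cap\Nu(BT^*).
\]
Since $AT^*$ and $BT^*$ are EP (Theorem \ref{characterization 1 bis}), $\Nu(AT^*)=\Nu(TA^*)$ and $\Nu(BT^*)=\Nu(TB^*)$, so $\Nu(M)\subseteq\Nu(TA^*+TB^*)=\Nu(M^*)$; equality of nullities for the square matrices $M$ and $M^*$ then forces $\Nu(M)=\Nu(M^*)$, i.e.\ $(A+B)T^*$ is EP. Note that this argument never needs, and indeed cannot have, rank additivity. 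If you want to salvage your approach, replace the false range-disjointness claim with this null-space identity; the rest of your reduction then goes through.
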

\begin{proof} Let $A$ and $B$ be $T$-EP. As $A^*B+B^*A=0$  we have $TA^*BT^*+TB^*AT^*=0$, and so
\begin{align*}
(AT^*+BT^*)^*(AT^*+BT^*) &=(TA^*+TB^*)(AT^*+BT^*) \\
&=TA^*AT^*+TA^*BT^*+TB^*AT^*+TB^*BT^*\\
&=TA^*AT^*+TB^*BT^*.
\end{align*}
Thus,
\begin{eqnarray*}
\Nu(AT^*+BT^*)&=&\Nu((AT^*+BT^*)^*(AT^*+BT^*))\\
        &=& \Nu(TA^*AT^*+TB^*BT^*) \\
        &=& \Nu\left(\begin{bmatrix} AT^*\\BT^*
\end{bmatrix}^*\begin{bmatrix} AT^*\\BT^*
\end{bmatrix}\right) \\
        &=& \Nu\begin{bmatrix} AT^*\\BT^*
\end{bmatrix} \\
        &=& \Nu(AT^*)\cap \Nu(BT^*).
\end{eqnarray*}
From Theorem \ref{characterization 1 bis} we know that $AT^*$ is EP and $A=AT^*T$, and also $BT^*$ is EP and $B=BT^*T$.    Therefore,
\begin{align*}
\Nu(AT^*+BT^*)& = \Nu(AT^*)\cap\Nu(BT^*)\\
&=\Nu(TA^*)\cap\Nu(TB^*)\\
& \subseteq \Nu(TA^*+TB^*)\\
&=\Nu(T(A+B)^*) \\
&= \Nu((AT^*+BT^*)^*),
\end{align*}
whence $\Nu(AT^*+BT^*)=\Nu((AT^*+BT^*)^*)$.  Thus,  $(A+B)T^*$ is EP. Also, it is clear that $A+B=(A+B)T^*T$. Now, the result follows from Theorem \ref{characterization 1 bis}.
\end{proof}

\begin{corollary}\label{A*B=0} Let $A,B \in \Cm$ be $T$-EP  such that  $A^*B=0$. Then,  $A+B$ is $T$-EP.
 \end{corollary}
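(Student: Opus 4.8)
The plan is to derive Corollary \ref{A*B=0} as an immediate special case of Theorem \ref{new result 5}. The hypothesis of the corollary is that $A$ and $B$ are $T$-EP and $A^*B=0$. First I would observe that $A^*B=0$ forces $B^*A=0$ as well: taking the conjugate transpose of $A^*B=0$ gives $(A^*B)^*=B^*A=0$. Consequently $A^*B+B^*A=0+0=0$, so the orthogonality hypothesis $A^*B+B^*A=0$ of Theorem \ref{new result 5} is satisfied.

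With that verification in hand, the conclusion follows directly: since $A$ and $B$ are $T$-EP matrices in $\Cm$ satisfying $A^*B+B^*A=0$, Theorem \ref{new result 5} applies verbatim and yields that $A+B$ is $T$-EP. There is essentially no obstacle here; the only thing to be careful about is recording the short computation $B^*A=(A^*B)^*=0^*=0$ explicitly so that the reduction to Theorem \ref{new result 5} is transparent. I would write the proof in two sentences: one establishing $A^*B+B^*A=0$ from $A^*B=0$, and one invoking the theorem.

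A concrete rendering of the proof would be: ``Since $A^*B=0$, taking conjugate transpose yields $B^*A=(A^*B)^*=0$. Hence $A^*B+B^*A=0$, and the result follows from Theorem \ref{new result 5}.'' This is complete, syntactically trivial, and uses only results already proved in the excerpt, so I expect no difficulty in carrying it out.
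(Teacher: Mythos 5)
Your proposal is correct and matches the paper's proof, which simply states that the corollary is a direct consequence of Theorem \ref{new result 5}; your explicit observation that $B^*A=(A^*B)^*=0$ is exactly the small verification being left implicit there.
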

 \begin{proof}
It is a direct consequence from Theorem \ref{new result 5}.
 \end{proof}
 
\begin{corollary}\label{BA*=0} Let $A,B \in \Cm$ be $T$-EP  such that  $BA^*=0$. Then,  $A+B$ is $T$-EP.
 \end{corollary}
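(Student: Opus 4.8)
The plan is to reduce the statement to Corollary \ref{A*B=0} by means of the conjugate-transpose duality provided by Theorem \ref{characterization 4}. First I would record that $T^*$ is again a partial isometry (apply $*$ to $T=TT^*T$), and that by Theorem \ref{characterization 4} the hypotheses ``$A$ is $T$-EP'' and ``$B$ is $T$-EP'' are equivalent to ``$A^*$ is $T^*$-EP'' and ``$B^*$ is $T^*$-EP'' respectively. Secondly, taking $*$ in $BA^*=0$ gives $AB^*=0$, that is $(A^*)^*B^*=0$, which is exactly the hypothesis of Corollary \ref{A*B=0} for the pair $A^*,B^*$ relative to the partial isometry $T^*$. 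Applying Corollary \ref{A*B=0} with the substitutions $A\mapsto A^*$, $B\mapsto B^*$, $T\mapsto T^*$ then yields that $A^*+B^*$ is $T^*$-EP. Finally, invoking Theorem \ref{characterization 4} once more, now applied to the matrix $A^*+B^*$ and the partial isometry $T^*$, shows that $A^*+B^*$ is $T^*$-EP if and only if its conjugate transpose $A+B$ is $T$-EP, which is the desired conclusion.

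Each step merely invokes an equivalence already established, so there is essentially no obstacle here; the only thing requiring care is the bookkeeping of which matrices and which partial isometry play the roles of $A$, $B$, and $T$ when Corollary \ref{A*B=0} and Theorem \ref{characterization 4} are applied to conjugate-transposed data. Alternatively, one could argue directly along the lines of the proof of Theorem \ref{new result 5}: from $BA^*=0$ (hence also $AB^*=0$) together with $A=AT^*T$ and $B=BT^*T$ (Definition \ref{def TEP}) one obtains $AT^*TB^*+BT^*TA^*=AB^*+BA^*=0$, so that $(AT^*+BT^*)(AT^*+BT^*)^*=AT^*TA^*+BT^*TB^*$; replacing the null-space computation of Theorem \ref{new result 5} by the corresponding range-space computation then shows that $(A+B)T^*$ is EP, whence $A+B$ is $T$-EP by Theorem \ref{characterization 1 bis}. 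In either case the argument is short and routine.
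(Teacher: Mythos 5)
Your proposal is correct and follows essentially the same route as the paper: both reduce the statement to Corollary \ref{A*B=0} by passing to conjugate transposes via Theorem \ref{characterization 4} (the paper reads $BA^*=0$ as $(B^*)^*A^*=0$ and applies the corollary to the pair $B^*,A^*$, while you transpose first to get $(A^*)^*B^*=0$ and apply it to $A^*,B^*$ --- an immaterial difference). No issues.
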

\begin{proof}
Firstly note that $BA^*=0$ can be written as $(B^*)^*A^*=0$. Also, by Theorem \ref{characterization 4} we know that $A^*$ and $B^*$ are $T^*$-EP. Thus, from Corollary \ref{A*B=0} we have that $B^*+A^*$ is $T^*$-EP. Thus, by applying  again Theorem \ref{characterization 4} we get $A+B$ is $T$-EP.
\end{proof}

An interesting consequence of previous corollaries is the following theorem. Before we need tot define the concept of $*$-orthogonality \cite{Hes} of two matrices.
 \begin{definition}\label{*orthogonal} Let $A,~B \in \Cm.$ We say $A$ and $B$ are $*$-orthogonal in case $A^*B=0$ and $BA^*=0$.
 \end{definition}

It is easy to see that $A$ and $B$ are $*$-orthogonal if and only if $A^*$ and $B^*$ are $*$-orthogonal. Moreover, if $A$ and $B$ are both $T$-EP, then
  (i) $A$ and $B$ are $*$-orthogonal implies $AT^*$ and $BT^*$ are $*$-orthogonal and (ii) $TA^*$ and $TB^*$ are $*$-orthogonal.

\begin{theorem}\label{sum *orthogonal} Let $A,B \in \Cm$ be $T$-EP. If $A$ and $B$ are $*$-orthogonal, then $A+B$ is $T$-EP.
\end{theorem}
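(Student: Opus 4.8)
The plan is to deduce this theorem directly from the two corollaries already established, namely Corollary~\ref{A*B=0} and Corollary~\ref{BA*=0}. Recall that by Definition~\ref{*orthogonal}, saying $A$ and $B$ are $*$-orthogonal means precisely that both $A^*B=0$ and $BA^*=0$ hold simultaneously. Since $A$ and $B$ are assumed to be $T$-EP, each of these two conditions is, on its own, already enough to conclude that $A+B$ is $T$-EP: the condition $A^*B=0$ invokes Corollary~\ref{A*B=0}, while the condition $BA^*=0$ invokes Corollary~\ref{BA*=0}. So in fact the $*$-orthogonality hypothesis is stronger than what is needed, and the proof is essentially a one-line citation.

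Concretely, I would argue as follows. Assume $A,B\in\Cm$ are both $T$-EP and that $A$ and $B$ are $*$-orthogonal. By Definition~\ref{*orthogonal} this yields in particular $A^*B=0$. Applying Corollary~\ref{A*B=0} to the pair $A,B$ then gives immediately that $A+B$ is $T$-EP, which is the assertion. (One could equally well cite $BA^*=0$ together with Corollary~\ref{BA*=0}; either route closes the argument.)

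There is essentially no obstacle here: the real content was already carried by Theorem~\ref{new result 5} and its corollaries, which handled the case $A^*B+B^*A=0$, then specialized to $A^*B=0$ and to $BA^*=0$. The present theorem is a clean corollary packaging of those facts under the named hypothesis of $*$-orthogonality, so the proof is short. The only thing to be careful about is to state clearly which of the two defining equalities of $*$-orthogonality is being used, and to reference the correct corollary. I would write:

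\begin{proof}
Since $A$ and $B$ are $*$-orthogonal, Definition~\ref{*orthogonal} gives $A^*B=0$ (and also $BA^*=0$). As $A$ and $B$ are $T$-EP, Corollary~\ref{A*B=0} applies and yields that $A+B$ is $T$-EP.
\end{proof}
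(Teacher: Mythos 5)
Your proposal is correct and matches the paper's own proof exactly: the authors likewise derive the result by noting that $*$-orthogonality gives $A^*B=0$ and then invoking Corollary~\ref{A*B=0}. Nothing further is needed.
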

\begin{proof}  Follows by Definition \ref{*orthogonal} and Corollary \ref{A*B=0}.
\end{proof}

In the following two results we present another sufficient  conditions for a sum of two $T$-EP matrices to be $T$-EP. 

\begin{theorem}\label{sum1} Let $A,B \in \Cm$ be $T$-EP  such that  $\Ra(A) \cap \Ra(B)=\{0\}$. Then, $A+B$ is $T$-EP.
 \end{theorem}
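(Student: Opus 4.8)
The plan is to reduce everything to the EP-matrix setting via the canonical-form characterization in Theorem~\ref{canonical form rect}(iii), just as the proofs of Theorem~\ref{new result 5} and its corollaries did. Since $A$ and $B$ are $T$-EP, Theorem~\ref{characterization 1 bis} gives that $AT^*$ and $BT^*$ are EP, with $A=AT^*T$ and $B=BT^*T$; write $E:=AT^*$ and $F:=BT^*$, so $A=ET$, $B=FT$, $E=TT^*E=ETT^*$, $F=TT^*F=FTT^*$, and $A+B=(E+F)T$ with $E+F=(A+B)T^*$. By Theorem~\ref{characterization 1 bis} again, it suffices to show $E+F$ is EP (the identity $A+B=(A+B)T^*T$ is immediate from $A=AT^*T$, $B=BT^*T$). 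So the whole problem collapses to: \emph{if $E,F$ are EP and $\Ra(E+F)$-type information holds, then $E+F$ is EP.}

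Next I would translate the hypothesis $\Ra(A)\cap\Ra(B)=\{0\}$ into a statement about $E$ and $F$. Since $A=ET$ we have $\Ra(A)\subseteq\Ra(E)$, and since $E=ETT^*=AT^*$ we have $\Ra(E)=\Ra(AT^*)\subseteq\Ra(A)$; hence $\Ra(E)=\Ra(A)$ and similarly $\Ra(F)=\Ra(B)$. Therefore the hypothesis is exactly $\Ra(E)\cap\Ra(F)=\{0\}$. Now I invoke the classical fact about EP matrices: if $E,F$ are EP and $\Ra(E)\cap\Ra(F)=\{0\}$, then $E+F$ is EP. The standard argument is that for EP matrices $\Ra(E+F)=\Ra(E)\dotplus\Ra(F)$ and $\Ra((E+F)^*)=\Ra(E^*)\dotplus\Ra(F^*)=\Ra(E)\dotplus\Ra(F)$ when the range sum is direct, giving $\Ra(E+F)=\Ra((E+F)^*)$; one uses that $\Ra(E^*)=\Ra(E)$, $\Ra(F^*)=\Ra(F)$, and that $\Nu(E)=\Ra(E)^\perp$, $\Nu(F)=\Ra(F)^\perp$ for EP matrices to check that $\Nu(E+F)=\Nu(E)\cap\Nu(F)$ and that the rank adds up. Concretely: $\Ra(E+F)\subseteq\Ra(E)+\Ra(F)$, and if $(E+F)x=0$ then $Ex=-Fx\in\Ra(E)\cap\Ra(F)=\{0\}$, so $Ex=Fx=0$, i.e. $\Nu(E+F)=\Nu(E)\cap\Nu(F)=\Ra(E^*)^\perp\cap\Ra(F^*)^\perp=(\Ra(E^*)+\Ra(F^*))^\perp=(\Ra(E)+\Ra(F))^\perp$; taking dimensions, $\rk(E+F)=n-\dim(\Ra(E)+\Ra(F))=\dim\Ra(E)+\dim\Ra(F)-\dim(\Ra(E)+\Ra(F))+(\dim\Ra(E)+\dim\Ra(F))$... — more cleanly, $\rk(E+F)=n-\dim\Nu(E+F)=\dim(\Ra(E)+\Ra(F))$, while $\Ra(E+F)\subseteq\Ra(E)+\Ra(F)$ forces equality of these subspaces, and the same computation with $(E+F)^*=E^*+F^*=E+F$ (wait — $E,F$ EP does not make $E+F$ symmetric, so instead) apply the identical reasoning to $(E+F)^*$: $\Nu((E+F)^*)=\Nu(E^*)\cap\Nu(F^*)=\Nu(E)\cap\Nu(F)=\Nu(E+F)$, hence $\Ra(E+F)=\Nu((E+F)^*)^\perp=\Nu(E+F)^\perp=\Ra((E+F)^*)$, so $E+F$ is EP.

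Then I finish: $E+F=(A+B)T^*$ is EP and $A+B=(A+B)T^*T$, so by Theorem~\ref{characterization 1 bis}(ii) $A+B$ is $T$-EP.

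The main obstacle is establishing cleanly that $\Nu(E+F)=\Nu(E)\cap\Nu(F)$ and the dual $\Nu((E+F)^*)=\Nu(E^*)\cap\Nu(F^*)$, which is where the direct-sum hypothesis $\Ra(E)\cap\Ra(F)=\{0\}$ is genuinely used (for the forward inclusion via $Ex=-Fx$), combined with the EP identities $\Nu(E)=\Ra(E^*)^\perp=\Ra(E)^\perp$ to convert an intersection of nullspaces into the orthogonal complement of a sum of ranges; the passage to $(E+F)^*$ is symmetric since $\Ra(E^*)\cap\Ra(F^*)=\Ra(E)\cap\Ra(F)=\{0\}$ as well. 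Everything else is bookkeeping with Theorems~\ref{characterization 1 bis} and~\ref{canonical form rect}, exactly in the style already used for Theorem~\ref{new result 5}.
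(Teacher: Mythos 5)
Your proposal is correct and follows essentially the same route as the paper: both reduce the problem via Theorem~\ref{characterization 1 bis} to showing that $(A+B)T^*$ is EP, after transferring the hypothesis $\Ra(A)\cap\Ra(B)=\{0\}$ to the EP matrices $AT^*$ and $BT^*$. The only (minor) difference is in how the EP-ness of the sum is established: you obtain $\Nu(AT^*+BT^*)=\Nu(AT^*)\cap\Nu(BT^*)$ directly from the element argument $Ex=-Fx\in\Ra(E)\cap\Ra(F)=\{0\}$ and then pass to orthogonal complements, whereas the paper gets the same null-space identity by a rank count using the stacked matrix $\begin{bmatrix} AT^*\\ BT^*\end{bmatrix}$; your version is marginally more elementary but substantively equivalent.
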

 \begin{proof} Since $A$ and $B$ are $T$-EP, by  Theorem \ref{characterization 1 bis} we have $AT^*$ is EP and $A=AT^*T$, and also $BT^*$ is EP and $B=BT^*T.$ We will show that $(A+B)T^*$ is EP and $(A+B)=(A+B)T^*T$. Clearly, the second condition is true. On the other hand,  note that
\begin{equation}\label{kernel}
\Nu\left(\begin{bmatrix}
           AT^* \\
          BT^*
\end{bmatrix}\right)=\Nu(AT^*)\cap \Nu(BT^*)\subseteq \Nu(AT^*+BT^*).
\end{equation}
Also, we have $\Ra(AT^*) \cap \Ra(BT^*)\subseteq \Ra(A) \cap \Ra(B)=\{0\}$. In consequence, as $AT^*$ and $BT^*$ are EP, also we have  $\Ra(TA^*) \cap \Ra(TB^*)=\{0\}$. Thus,
\begin{equation}\label{rank sum}
\rank(AT^*+BT^*)=\rank(AT^*)+\rank(BT^*)=\rank\begin{bmatrix}
           AT^* \\
          BT^*
\end{bmatrix}.
\end{equation}
Therefore, from \eqref{kernel} and \eqref{rank sum} we obtain $\Nu(AT^*+BT^*)=\Nu((AT^*+BT^*)^*)$, i.e., $(A+B)T^*$ is EP.  Once again by Theorem \ref{characterization 1 bis}, $A+B$ is $T$-EP.
 \end{proof}

\begin{theorem}\label{sum2} Let $A,B \in \Cm$ be $T$-EP  such that  $\Ra(A^*) \cap \Ra(B^*)=\{0\}$. Then, $A+B$ is $T$-EP.
 \end{theorem}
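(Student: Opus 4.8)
The plan is to mirror the argument of Theorem \ref{sum1}, but working on the ``range of the adjoint'' side and exploiting the symmetry results already established. Since $A$ and $B$ are $T$-EP, Theorem \ref{characterization 4} tells us that $A^*$ and $B^*$ are $T^*$-EP, and the hypothesis $\Ra(A^*)\cap\Ra(B^*)=\{0\}$ is precisely the condition ``$\Ra(C)\cap\Ra(D)=\{0\}$ for the pair $C=A^*$, $D=B^*$'' that appears in Theorem \ref{sum1} (with $T$ replaced by $T^*$, and noting that $T^*$ is again a partial isometry). Applying Theorem \ref{sum1} to the $T^*$-EP matrices $A^*$ and $B^*$ then gives that $A^*+B^*$ is $T^*$-EP. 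Finally, invoking Theorem \ref{characterization 4} once more — this time in the direction that $C^*$ being $T^*$-EP implies $C=(C^*)^*$ is $T$-EP, applied to $C^*=A^*+B^*$, so $C=A+B$ — yields that $A+B$ is $T$-EP, as desired.

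An equivalent, more self-contained route (should one prefer not to pass through Theorem \ref{sum1}) is to run the dual of that proof directly. Using Theorem \ref{characterization 1 bis}, $A$ and $B$ being $T$-EP gives that $TA^*$ and $TB^*$ are EP with $A=AT^*T$, $B=BT^*T$; equivalently (via Theorem \ref{properties 2}(i) or the fact that a matrix is EP iff its adjoint is) $AT^*$ and $BT^*$ are EP, hence $\Ra(AT^*)=\Ra((AT^*)^*)=\Ra(TA^*)$ and similarly for $B$. One then checks $\Ra(TA^*)\cap\Ra(TB^*)\subseteq T(\Ra(A^*)\cap\Ra(B^*))=\{0\}$, so that $\rank(TA^*+TB^*)=\rank(TA^*)+\rank(TB^*)$, and since $AT^*$, $BT^*$ are EP this forces $\rank(AT^*+BT^*)$ to equal $\rank\begin{bmatrix}(AT^*)^*\\(BT^*)^*\end{bmatrix}=\rank\begin{bmatrix}TA^*\\TB^*\end{bmatrix}$, combined with $\Nu\big(\begin{bmatrix}TA^*\\TB^*\end{bmatrix}\big)=\Nu(TA^*)\cap\Nu(TB^*)\subseteq\Nu(TA^*+TB^*)=\Nu((AT^*+BT^*)^*)$, to conclude $\Nu((A+B)T^*)^*=\Nu((A+B)T^*)$, i.e., $(A+B)T^*$ is EP; together with the obvious $A+B=(A+B)T^*T$, Theorem \ref{characterization 1 bis} finishes it.

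The main obstacle, such as it is, lies in getting the range intersection to transfer correctly: one needs $\Ra(TA^*)\cap\Ra(TB^*)=\{0\}$ to follow from $\Ra(A^*)\cap\Ra(B^*)=\{0\}$, and this requires care because $T$ need not be injective. The point is that for $T$-EP matrices we have $\Ra(A^*)\subseteq\Ra(T^*)$ (Lemma \ref{lemma 2}), on which $T$ acts injectively (since $T=TT^*T$ means $\Nu(T)=\Ra(T^*)^\perp$), so $T$ restricted to $\Ra(A^*)+\Ra(B^*)$ is injective and the image of a trivial intersection is trivial. Because of this subtlety I would favor the first route: it offloads all of this bookkeeping onto Theorem \ref{sum1}, whose proof has already handled the analogous step, and keeps the present argument to two short applications of Theorem \ref{characterization 4}.
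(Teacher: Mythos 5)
Your first (and preferred) route is exactly the paper's proof: pass to $A^*$, $B^*$ via Theorem \ref{characterization 4}, apply Theorem \ref{sum1} with $T^*$ in place of $T$, and return via Theorem \ref{characterization 4} again. The proposal is correct and takes essentially the same approach as the paper.
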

\begin{proof}
Let $A$ and $B$ be $T$-EP. From Theorem \ref{characterization 4} we know that $A^*$ and $B^*$ are $T^*$-EP. Thus, Theorem \ref{sum1} implies $A^*+B^*$ is $T^*$-EP. Once again Theorem \ref{characterization 4} we obtain that $A+B$ is $T$-EP.
\end{proof}

We end this section with yet another sufficient conditions for a sum of two $T$-EP matrices to be $T$-EP. 

 \begin{theorem}\label{sum4} Let $A,B \in \Cm$ be $T$-EP  such that $\rank(T(A+B)^*T)=\rank(A+B)$ and  $\Ra(A+B)=\Ra(A)+\Ra(B)$. Then $A+B$ is $T$-EP.
\end{theorem}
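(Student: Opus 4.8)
The plan is to reduce the statement to the EP-matrix criterion via Theorem \ref{characterization 1 bis}: since $A$ and $B$ are $T$-EP, each satisfies $AT^*$ is EP with $A=AT^*T$, and $BT^*$ is EP with $B=BT^*T$. The condition $A+B=(A+B)T^*T$ is then immediate, so everything hinges on showing $(A+B)T^*$ is EP, i.e. $\Ra\bigl((A+B)T^*\bigr)=\Ra\bigl((A+B)^*T\bigr)$, or equivalently the rank condition $\rank\bigl((A+B)T^*+\cdots\bigr)$ paired with the appropriate inclusion. Concretely I would try to establish $\Ra\bigl((A+B)T^*\bigr)=\Ra\bigl(T(A+B)^*\bigr)$ directly, since for a square matrix $M$ one has $M$ EP iff $\Ra(M)=\Ra(M^*)$.

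First I would exploit the range hypothesis $\Ra(A+B)=\Ra(A)+\Ra(B)$. Right-multiplying by $T^*$ does not in general preserve sums of ranges, but since $A=AT^*T$ and $B=BT^*T$ we have $A=(AT^*)T$ and $B=(BT^*)T$, so $\Ra(A)\subseteq\Ra(AT^*)$ and in fact, using that $AT^*$ is EP together with Theorem \ref{properties 1}(i) (which gives $A=TT^*A$, hence $AT^*=TT^*AT^*$), one can pin down $\Ra(AT^*)=\Ra(TA^*)$ and relate $\Ra(AT^*)$ to $\Ra(A)$ via the partial isometry $T$. The aim is to show $\Ra\bigl((A+B)T^*\bigr)=\Ra(AT^*)+\Ra(BT^*)$: the inclusion $\subseteq$ is trivial, and for $\supseteq$ I would use the rank hypothesis $\rank\bigl(T(A+B)^*T\bigr)=\rank(A+B)$ to count dimensions. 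Indeed $\rank\bigl((A+B)T^*\bigr)\ge\rank\bigl((A+B)T^*T\bigr)=\rank(A+B)=\rank\bigl(T(A+B)^*T\bigr)$, and since $A+B$ is automatically $TT^*$-annihilated appropriately, one should get $\rank\bigl((A+B)T^*\bigr)=\rank(A+B)=\dim\bigl(\Ra(A)+\Ra(B)\bigr)$, while $\dim\bigl(\Ra(AT^*)+\Ra(BT^*)\bigr)\le\rank(AT^*)+\rank(BT^*)=\rank(A)+\rank(B)$; combined with $\Ra(A+B)=\Ra(A)+\Ra(B)$ and the transport of ranges under $T,T^*$, this forces the sum $\Ra(AT^*)+\Ra(BT^*)$ to be direct and to equal $\Ra\bigl((A+B)T^*\bigr)$.

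Once $\Ra\bigl((A+B)T^*\bigr)=\Ra(AT^*)\oplus\Ra(BT^*)$ is in hand, EP-ness of $(A+B)T^*$ follows: taking conjugate transposes, $\Ra\bigl(T(A+B)^*\bigr)\supseteq\Ra(TA^*)$ and $\Ra(TB^*)$; since $AT^*$ and $BT^*$ are EP we have $\Ra(AT^*)=\Ra(TA^*)$ and $\Ra(BT^*)=\Ra(TB^*)$; and the rank hypothesis guarantees no dimension is lost when passing from $(A+B)T^*$ to $T(A+B)^*$, so $\Ra\bigl((A+B)T^*\bigr)=\Ra\bigl(T(A+B)^*\bigr)=\Ra\bigl(((A+B)T^*)^*\bigr)$, i.e. $(A+B)T^*$ is EP. Together with $A+B=(A+B)T^*T$, Theorem \ref{characterization 1 bis}(ii) then yields that $A+B$ is $T$-EP.

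The main obstacle I anticipate is the bookkeeping around how the partial isometry $T$ moves ranges back and forth: the equalities $\Ra(AT^*)=\Ra(TA^*)$ and the precise relationship between $\Ra(AT^*)$ and $T\Ra(A^*)$ rely on $A=TT^*A$ and $A=AT^*T$ in just the right combination, and one must be careful that $\Ra(A)+\Ra(B)$ transported through $T^*$ genuinely matches $\Ra(AT^*)+\Ra(BT^*)$ rather than something smaller. The rank condition $\rank\bigl(T(A+B)^*T\bigr)=\rank(A+B)$ is exactly what prevents collapse at the last step, so the delicate part is threading the chain of rank inequalities $\rank(A+B)\le\rank\bigl((A+B)T^*\bigr)\le\dim\bigl(\Ra(AT^*)+\Ra(BT^*)\bigr)\le\rank(A)+\rank(B)$ and showing the hypothesis $\Ra(A+B)=\Ra(A)+\Ra(B)$ forces equality throughout.
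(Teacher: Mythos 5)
Your overall route --- reducing to Theorem \ref{characterization 1 bis}(ii) by showing $(A+B)T^*$ is EP --- can be made to work, but the pivotal middle step as you have written it is wrong. The hypotheses do \emph{not} force the sum $\Ra(AT^*)+\Ra(BT^*)$ to be direct: take $A=B\neq 0$ both $T$-EP; then $\Ra(A+B)=\Ra(A)=\Ra(A)+\Ra(B)$ and $\rank(T(A+B)^*T)=\rank(A+B)$, so all hypotheses hold, yet $\Ra(AT^*)\cap\Ra(BT^*)=\Ra(AT^*)\neq\{0\}$. Relatedly, your inequality $\dim\bigl(\Ra(AT^*)+\Ra(BT^*)\bigr)\le\rank(A)+\rank(B)$ cannot be ``combined'' with $\Ra(A+B)=\Ra(A)+\Ra(B)$ to close the gap, because $\rank(A)+\rank(B)$ may strictly exceed $\rank(A+B)$; the chain of inequalities you propose to thread does not collapse to equalities. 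The repair is elementary and sidesteps directness entirely: since $A=AT^*T$ one has $\Ra(A)=\Ra(AT^*T)\subseteq\Ra(AT^*)\subseteq\Ra(A)$, hence $\Ra(AT^*)=\Ra(A)$, and likewise $\Ra(BT^*)=\Ra(B)$ and $\Ra\bigl((A+B)T^*\bigr)=\Ra(A+B)$; then $\Ra\bigl((A+B)T^*\bigr)=\Ra(A)+\Ra(B)=\Ra(AT^*)+\Ra(BT^*)$ with no dimension count needed, and your final paragraph (using $\Ra(AT^*)=\Ra(TA^*)$, $\Ra(BT^*)=\Ra(TB^*)$ and $\rank(T(A+B)^*)\ge\rank(T(A+B)^*T)=\rank(A+B)$) then does go through.

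You should also know that the paper's own proof never leaves the definition of $T$-EP and is three lines: $\Ra(T(A+B)^*T)\subseteq\Ra(TA^*T)+\Ra(TB^*T)=\Ra(A)+\Ra(B)=\Ra(A+B)$, the rank hypothesis upgrades this inclusion of subspaces to an equality, and $(A+B)T^*T=A+B$ is immediate. Passing through $(A+B)T^*$ and Theorem \ref{characterization 1 bis} buys you nothing here; if you fix the step above your argument is correct but strictly longer.
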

\begin{proof} As $A$ and $B$ are $T$-EP, by definition  we have $\Ra(A)=\Ra(TA^*T)$ and $A=AT^*T$, and also $\Ra(B)=\Ra(TB^*T)$ and $B=BT^*T$. In consequence, from $\Ra(A+B)=\Ra(A)+\Ra(B)$ we obtain  \[\Ra(T(A+B)^*T) \subseteq \Ra(TA^*T)+\Ra(TB^*T)=\Ra(A)+\Ra(B)=\Ra(A+B).\]
Thus, $\rank(T(A+B)^*T)=\rank(A+B)$ implies $\Ra(T(A+B)^*T) = \Ra(A+B).$ Moreover, it is easy to see $(A+B)=(A+B)T^*T.$ Hence $A+B$ is $T$-EP.
\end{proof}

\begin{remark}\label{last remark}{\rm Note that if $A$ and $B$ are $T$-EP and $*$-orthogonal then (i)~ $\rank(T(A+B)^*T)=\rank(A+B)$ and (ii)~ $\Ra(A+B)=\Ra(A)+\Ra(B)$. In fact, (i) it follows from Corollary \ref{sum *orthogonal} and Theorem \ref{properties 1} (ii). \\
(ii) It is a consequence of the fact that $A^*B=0$ and $BA^*=0$ imply $\Ra(A) \cap \Ra(B)=\{0\}$ and $\Ra(A^*) \cap \Ra(B^*)=\{0\}$, respectively. Thus, $\rank(A+B)=\rank(A)+\rank(B)$ and therefore $\Ra(A+B)=\Ra(A)+\Ra(B)$.
}
\end{remark}

\end{document}